\newtheorem{thm}{Theorem}[section]
\newtheorem{lem}[thm]{Lemma}
\newtheorem{cor}[thm]{Corollary}
\newtheorem{prp}[thm]{Proposition}
\theoremstyle{remark}
\newtheorem{rmk}[thm]{Remark}
\theoremstyle{definition}
\newtheorem{dfn}[thm]{Definition}
\newtheorem{exs}[thm]{Examples}
\numberwithin{equation}{section}
\newcommand{\aro}{\longrightarrow}
\newcommand{\arou}[1]{\stackrel{#1}{\longrightarrow}}
\newcommand{\mm}[1]{\mathrm{#1}}
\newcommand{\bb}[1]{\mathbf{#1}}
\newcommand{\cc}[1]{\mathcal{#1}}
\newcommand{\cb}{\cc{B}}
\newcommand{\cC}{\cc{C}}
\newcommand{\ce}{\cc{E}}
\newcommand{\cf}{\cc{F}}
\newcommand{\ci}{\cc{I}}
\newcommand{\ck}{\cc{K}}
\newcommand{\cl}{\cc{L}}
\newcommand{\cm}{\cc{M}}
\newcommand{\cn}{\cc{N}}
\newcommand{\co}{\cc{O}}
\newcommand{\cR}{\cc{R}}
\newcommand{\ct}{\cc{T}}
\newcommand{\cv}{\cc{V}}
\newcommand{\g}[1]{\mathfrak{#1}}
\newcommand{\PP}{\mathds{P}}
\newcommand{\NN}{\mathds{N}}
\newcommand{\ZZ}{\mathds{Z}}
\newcommand{\al}{\alpha}
\newcommand{\be}{\beta}
\newcommand{\ga}{\gamma}
\newcommand{\om}{\omega}
\newcommand{\Om}{\Omega}
\newcommand{\te}{\theta}
\newcommand{\ph}{\varphi}
\newcommand{\Ph}{\Phi}
\newcommand{\ps}{\psi}
\newcommand{\de}{\delta}
\newcommand{\la}{\lambda}
\newcommand{\si}{\sigma}
\newcommand{\ze}{\zeta}
\newcommand{\vs}{\vspace{0.3cm}}
\newcommand{\na}{\nabla}
\newcommand{\po}{\cdot}
\newcommand{\id}{\mathrm{id}}
\newcommand{\one}{\mathds 1}
\newcommand{\ti}{\times}
\newcommand{\ot}{\otimes}
\newcommand{\otu}[1]{\underset{#1}{\otimes}}
\newcommand{\wh}{\widehat}
\newcommand{\wt}{\widetilde}
\newcommand{\ov}[1]{\overline{#1}}
\newcommand{\op}{\oplus}
\newcommand{\lip}{\varprojlim}
\newcommand{\modules}[1]{#1\text{-}\mathbf{mod}}
\newcommand{\rep}[2]{\mm{Rep}_{#1}(#2)}
\newcommand{\ega}[3]{[EGA $\mathrm{#1}_{\mathrm{#2}}$, #3]}
\begin{document}

\title[Group scheme of quotient varieties]{On the fundamental group schemes of 
certain quotient varieties}

\author[I. Biswas]{Indranil Biswas$^{*}$}

\address{School of Mathematics, Tata Institute of Fundamental
Research, Homi Bhabha Road, Mumbai 400005, India}

\email{indranil@math.tifr.res.in}

\thanks{$^*$ Partially supported by a J.C. Bose Fellowship. 
}

\author[P. H. Hai]{Ph\`ung H\^o Hai$^{**}$}

\address{Institute of Mathematics, Vietnam Academy of Science and Technology, 18 Hoang Quoc Viet Street, 10307 Hanoi, 
Vietnam}

\email{phung@math.ac.vn}

\thanks{$^{**}$ Partially supported by the project Implementation of the Agreement ``LIA FORMATH VIETNAM'' of VAST, grant number QTFR01.04/18-19 and of by Projects  ICRTM01\_2019.01-2020.05 of  the International Centre for Research and Postgraduate Training in Mathematics (ICRTM), Institute of Mathematics, VAST}

\author[J. P. dos Santos]{Jo\~ao Pedro dos Santos}

\address{Institut de Math\'ematiques de Jussieu -- Paris Rive Gauche, 4 place Jussieu, 
Case 247, 75252 Paris Cedex 5, France}

\email{joao\_pedro.dos\_santos@yahoo.com}

\subjclass[2010]{Primary 14F35}

\keywords{$F$-divided fundamental group scheme, essentially finite fundamental
group scheme, quotient space, isotropy subgroup.
}

\date{April 27, 2024}

\begin{abstract}
In \cite{armstrong}, M. Armstrong proved a beautiful result describing fundamental 
groups of quotient spaces. In this paper we prove an analogue of Armstrong's theorem 
in the setting of $F$-divided \cite{dS07} and essentially finite \cite{nori76} 
fundamental group schemes.
\end{abstract}
\maketitle

\section{Introduction}

The goal here is to establish an analogue, in the theory of fundamental group schemes,
of a beautiful topological theorem found by M. A. Armstrong, which according to R.
Geoghegan \cite{Ge} ``is the kind of basic material that ought to 
have been in standard textbooks on fundamental groups for the last fifty years'':

\begin{thm}[{\cite[p.~299, Theorem]{armstrong}}]\label{17.10.2017--1}
Let $X$ be a path connected, simply connected, locally compact metric space.
Given a group $G$ acting discontinuously on $X$, the fundamental group of
the quotient space $G\backslash X$ is isomorphic with the quotient $G/I$, where
$I< G$ is the (necessarily normal) subgroup generated by all elements having at least
one fixed point. 
\end{thm}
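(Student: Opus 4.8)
The plan is to realize the quotient $Y := I\backslash X$ as the universal cover of $Z := G\backslash X$, with deck transformation group $G/I$. First I would record that $I$ is normal: if $g\in G$ fixes a point $x$ and $h\in G$, then $hgh^{-1}$ fixes $hx$, so the generating set of $I$ is stable under conjugation and hence $I\triangleleft G$. Consequently $G/I$ acts on $Y$, and I claim this action is free. Indeed, if $gI$ fixes an orbit $Ix\in Y$, then $gx=ix$ for some $i\in I$, whence $i^{-1}g$ fixes $x$; thus $i^{-1}g\in I$ and $g\in I$, i.e. $gI$ is trivial in $G/I$. The local finiteness built into the discontinuity hypothesis, together with local compactness, should show that this free action of $G/I$ on $Y$ is again properly discontinuous, so that $q\colon Y\to Z$ is a covering map (note $(G/I)\backslash Y\cong G\backslash X=Z$) with $\mathrm{Deck}(Y/Z)\cong G/I$.

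Granting that $Y$ is simply connected, the theorem follows: since $X$ is path connected and locally nice, both $Z$ and $Y$ are path connected, locally path connected and semilocally simply connected, so $q$ is the universal covering of $Z$, and the standard identification $\pi_1(Z)\cong\mathrm{Deck}(Y/Z)$ yields $\pi_1(G\backslash X)\cong G/I$. It therefore remains to prove that $I\backslash X$ is simply connected, and this I expect to be the heart of the matter.

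For the latter, I would isolate the following statement and apply it to the action of $I$ on $X$: \emph{if a group $H$ acting properly discontinuously on the simply connected space $X$ is generated by elements each of which has a fixed point, then $H\backslash X$ is simply connected.} (This applies to $H=I$ by the very definition of $I$, and is not circular since it demands only simple connectivity, which I intend to establish directly.) To prove it, let $S\subset X$ be the locus of points with nontrivial isotropy and $\bar S$ its image in $H\backslash X$; on the complement $X\setminus S$ the action is free, so $X\setminus S\to (H\backslash X)\setminus\bar S$ is an honest covering. Given a loop $\gamma$ in $H\backslash X$, I would first perturb it, using local path connectivity, so that it meets $\bar S$ only in a controlled way, lift the complementary arcs into $X\setminus S$, and read off from the failure of consecutive lifts to match a word in $H$ in the chosen fixed-point generators. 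The key local input is that near a fixed point $x$ one has an invariant neighborhood $U$ with quotient $H_x\backslash U$ and $H_x\subseteq I$ finite, so that a small loop encircling a crossing of $\bar S$ bounds after being pushed toward the fixed point; this is what kills each generator's contribution and collapses the word to the identity.

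The main obstacle, as indicated, is precisely this simple connectivity of $I\backslash X$: making rigorous the perturbation of $\gamma$ relative to the singular set $\bar S$, controlling the finitely many isotropy elements encountered along a lift, and verifying that the local model $H_x\backslash U$ genuinely contracts the corresponding loops. Proper discontinuity, which forces each isotropy group $H_x$ to be finite, together with the local compactness and metrizability of $X$, are exactly the hypotheses that make these local normal forms available, and I would lean on them throughout.
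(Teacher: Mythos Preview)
The paper does not prove this theorem: it is quoted from Armstrong's article \cite{armstrong} as motivation, and the paper's own contributions are the algebraic analogues (Theorems \ref{13.09.2016--3} and \ref{23.06.2017--1}). So there is no ``paper's own proof'' to compare against here.

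That said, your strategy is exactly the one the paper adopts for its analogues, and it is also Armstrong's. The factorization $X\to I\backslash X\to G\backslash X$, the verification that $G/I$ acts freely on $I\backslash X$, and the reduction to showing that $I\backslash X$ is simply connected correspond precisely to Lemma \ref{03.11.2017--1}(2) (the intermediate quotient is \'etale over the base, while $Y\to I\backslash Y$ is ``genuinely ramified'') together with Proposition \ref{10.11.2017--1} (genuinely ramified means the map on fundamental groups is surjective). Your identification of the hard step---simple connectivity of $I\backslash X$---is on target, and Armstrong's proof indeed proceeds by lifting paths and successively cancelling generators of $I$ using invariant neighbourhoods of their fixed points.

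One caution: you invoke that $X$ is ``locally nice'' (locally path connected, semilocally simply connected) to run covering-space arguments, but Armstrong's hypotheses give only path connectedness, simple connectedness, and local compactness of a metric space. The argument that $\pi_1(Z)\cong G/I$ once $Y$ is simply connected does go through from the path- and homotopy-lifting properties of the covering $Y\to Z$ alone, without appealing to existence of a universal cover of $Z$; but your perturbation argument for loops relative to the singular set leans implicitly on local path connectedness, and you should check whether Armstrong's actual hypotheses support that, or rephrase the lifting argument so that it does not require it.
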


The setting we have in mind is the following. Let $X$ be a variety over an algebraically closed field $k$ of positive characteristic,
on which a certain \emph{finite} abstract 
group $G$ acts (for unexplained notation, see the end of this introduction).
Under a mild condition on the orbits of $G$, it is a fact that a 
reasonable quotient of $X$ by $G$ exists in the category of varieties \cite[\S~7, 
Theorem]{mav}; denote the quotient by $G\backslash X$. Now we can ask how the
different fundamental groups 
of $X$ relate to those of $G\backslash X$. In the present work, we address this 
question in  two theories of the fundamental group scheme: the $F$-divided  \cite{dS07} and the essentially finite \cite{nori76}. See Theorem 
\ref{13.09.2016--3} and Theorem \ref{23.06.2017--1} for precise statements in these
directions.

The mechanism behind these results is worth being made conspicuous as everything 
hinges on two main ideas. The first one is very elementary and commonplace in number 
theory: all ramification of an extension is concentrated on the inertia field. We 
present a clear geometric picture of this in Section \ref{genuinely_ramified}. The 
second one is more sophisticated and based on the fact that the etale fundamental 
group has, in many cases, a mysterious control over other fundamental group schemes. 
Here this is manifested through the fact that as soon as a certain morphism $Y
\,\longrightarrow\, X$ 
realizes $\pi_1(X)$ as a quotient of $\pi_1(Y)$, then the same is true 
for the $F$-divided and for the essentially finite fundamental groups. We offer a 
careful explanation in Theorem \ref{06.07.2016--1} and Theorem \ref{03.11.2017--2}.

Let us now review the remaining sections of the paper. 

The paper is essentially divided into three parts. The first part  starts with preliminaries on Tannakian duality (Sections \ref{sect:tannakian}). As a recompense for the reader of this section, we offer a slight variation of the criterion for exact sequences first presented in \cite[Appendix]{EHS08}, see Proposition \ref{tannakian_exactness}. This variation is used throughout the paper.
Then we introduce in Section \ref{genuinely_ramified} the concept of 
genuinely ramified finite morphisms, a terminology due to Balaji and Parameswaran, 
and demonstrates that this very useful notion easily connects to a number of other 
elementary ideas from the theory of coverings (in the broad sense), see Proposition \ref{10.11.2017--1}.  

The second part is devoted to $F$-divided sheaves and the $F$-divided fundamental group scheme.
Section \ref{preliminaries_F-divided} explains how to develop the $F$-divided fundamental 
group scheme \cite{dS07} beyond the realm of smooth $k$-schemes. This is 
essentially well-known as the technique behind the main point (Lemma 
\ref{08.07.2016--4}) works in more generality than stated in its first appearance 
\cite[Lemma 6]{dS07}.

Section \ref{s29.05.2018--1} exists to fill a gap in the literature and deals with an unsurprising result expressing the $F$-divided fundamental group of a quotient in the case of a {\it free} action; see Proposition \ref{16.02.2018--1}.

Section 
\ref{F-divided_quotient} offers one of our main results, an analogue of Armstrong's theorem in the setting of 
the $F$-divided fundamental group; see Theorem \ref{13.09.2016--3}. 
\\[1ex]
{\bf Theorem}. \it 
Let $G$ be a finite group acting on the normal variety $Y$ with
$$
f\,:\,Y\,\longrightarrow\, X 
$$
being the quotient. Choose $y_0\,\in\, Y(k)$ above
$x_0\,\in\, X(k)$. Then, the \textbf{ image}\footnote{The image is defined in terms of rings as in \cite[15.1]{waterhouse}. 
This part and the   corresponding ones in the next Theorem, Theorem \ref{13.09.2016--3} and Theorem \ref{23.06.2017--1} were wrongly stated in the previous versions of this paper, including the one printed by the Tohoku Math. Joural, and   A. Langer, who is  heartily thanked here, pointed this  out  to us.} of  the induced homomorphism
\[
 f_{\natural}\,:\,\Pi^{\rm FD}(Y,y_0)\,\aro\,\Pi^{\rm FD}(X,x_0)
\]
is a closed and normal subgroup scheme, and its quotient is identified with $G/I$, where $I$ is the subgroup (necessarily normal) generated by all
elements of $G$ having at least one fixed point.
\rm\\

Starting from 
Section \ref{extending} on, we concentrate on the theory of the essentially finite 
fundamental group scheme \cite{nori76}. But the structure is very much the same as in the theory of the $F$-divided group: we begin by proposing a slight generalization of 
Nori's theory to the case of non-proper varieties (Section \ref{extending}) and 
conclude, in Section \ref{quotient_EF}, by proving an analogue of Armstrong's theorem 
for this fundamental group scheme; see Theorem \ref{23.06.2017--1} (for the definition of
CPC variety see Definition \ref{05.05.19--1}).
\\[1ex]
{\bf Theorem}. \it 
Let $Y$ be a normal CPC variety. 
Let $G$ be a finite group of automorphisms of $Y$, and write 
$$
f\,:\,Y\,\longrightarrow\, X 
$$
for the quotient of $Y$ by $G$. Choose $y_0\,\in\, Y(k)$ above
$x_0\,\in\, X(k)$. Then, the \textbf{image}  of the induced homomorphism
\[
 f_{\natural}:\pi^{\rm EF}(Y,y_0)\,\aro\,\pi^{\rm EF}(X,x_0)
\]
is a closed and normal subgroup scheme, and its quotient is identified with $G/I$, where $I$ is the subgroup (necessarily normal) generated by all
elements of $G$ having at least one fixed point.\rm \\

It should be pointed out that many results in this paper are consequences of the 
recent work of Tonini and Zhang, \cite{TZ0}, \cite{TZ1}, \cite{TZ2} (we have made 
these connections explicit at the proper places). These authors study, in 
considerable generality, Tannakian categories of sheaves and, in doing so, they 
produce not only broader conclusions, but also more comprehensive frameworks. See also \cite{ABETZ} for related results.

As a final comment, we would like to call attention to the fact that Armstrong's 
theorem (Theorem \ref{17.10.2017--1}) finds a very attractive description using 
groupoids, as R. Brown pertinently points out in \cite{brown}. Since this formalism 
is closely related to the theory of Tannakian categories, we hope to re-examine our 
findings under this light in the future.

\subsection*{Notations and standard terminology}

\begin{enumerate}[(1)]
\item 
We fix once and for all an algebraically closed field $k$ of characteristic $p>0$.
\item An algebraic scheme over $k$ is a scheme of finite type over $k$ 
\ega{I}{}{6.4.1}. 
A \emph{variety} is an integral algebraic $k$-scheme.
A \emph{curve} is a one dimensional variety. 

\item To avoid repetitions, a point in a scheme $X$, unless otherwise said, means a \emph{closed} point of $X$.

\item If $X$ is a variety, $x_0$ is a closed point in it, then $\pi_1(X,x_0)$ is the etale fundamental group of \cite{SGA1}.

\item The absolute Frobenius morphism of a $k$-scheme $X$, respectively a $k$-algebra $A$,  shall be denoted by $F_X$, respectively $F_A$. 
If no confusion is likely, the subscript will be suppressed.

\item A \emph{finite-Galois} morphism of integral schemes $f\,:\,Y\,\longrightarrow\,
X$ is a finite morphism for which the associated extension of function fields
is Galois. We refer to the pertinent Galois group by $\mm{Gal}(f)$. 

\item 
A locally free coherent sheaf is called a \emph{vector-bundle}. If $\ce\subset\cf$ is an inclusion of coherent sheaves which are also vector bundles, we say that $\ce$ is a {\it sub-bundle} of $\cf$ if $\cf/\ce$ is also a vector bundle.  

\item 
An open and dense subset of a scheme $X$ is called \emph{big} if its complement has codimension at least two.

\item All group schemes are defined over $k$  and {\it affine}. 
\item A \emph{quotient morphism} between   group schemes  is a faithfully flat morphism (this terminology comes from \cite[15.1]{waterhouse}). We will employ many times the Theorem of 14.1 from \cite{waterhouse}.

\item Our conventions on representations follow \cite[Chapter 2, Part 1]{jantzen}. 

\item Following Deligne \cite[1.2]{Del90}, a tensor category $\ct$ over $k$ is a $k$-linear 
$\ot$-category which is ACU, rigid, and abelian with $k=\mm{End}(\one)$. A category is 
neutral Tannakian if there exists, in addition, a faithful, exact functor 
$\om:\ct\aro\modules k$ \cite[1.8]{Del90}. (Note that, in the presence of $\om$, the 
condition $k=\mm{End}(\one)$ is superfluous.)

\item For a tensor category $\ct$ over $k$, two elements $a=(a_0,\ldots,a_t)$ and $b=(b_0,\ldots,b_t)$ in $\NN^t$, and an object $V\in\ct$, we write $\bb T^{a,b}V$ to denote $\oplus_{i=1}^tV^{\ot a_i}\ot \check V^{\ot b_i}$. The full subcategory of all subquotients of objects of the form $\bb T^{a,b}V$ is denoted by $\langle V\rangle_\ot$. 
\end{enumerate}

\subsection*{Acknowledgements}{IB and JPdS wish to thank the ``Laboratoire International Associ\'e'' and the  ``Indo-French Program for Mathematics'' of the CNRS as well as the  International Research Staff Exchange Scheme ``MODULI'', projet  612534 from the  Marie Curie Actions of the European commission. All three  authors profit to thank the ``\'Equipe de Th\'eorie de Nombres'' of the Institut de Math\'ematiques de Jussieu--Paris Rive Gauche  for financing the participation of IB and PHH in the jury of JPdS's ``Habilitation''; this is where this collaboration began. Finally, we are grateful for the referee's contributions  which helped us to give proper explanations to a certain number of arguments. 
}

\section{Preliminaries on Tannakian duality}\label{sect:tannakian}
In this section we shall present some applications of Tannakian duality to study homomorphisms between group schemes over a field. These are analog and enhancement of the known results in \cite{dm,EHS08}. At the end, we recall the construction of the torsor associated to a tensor functor from a representation category to the category of coherent sheaves.

For simplicity, in this work a tensor category $\ct$ over $k$ is a $k$-linear $\ot$-category which is ACU, rigid, and abelian with $k=\mm{End}(\one)$. This amounts to say that $\ot$ is a $k$-linear abelian category equipped with a bi-additive $k$-linear exact functor $\otimes:\ct\times\ct\to \ct$ and a unit object $\one$, satisfying the associativity, commutativity and unity axioms (cf. \cite{dm}). Moreover, we require the existence of the dual object to each object in $\ct$. For each $U\in \ct$, its dual will be denoted by $U^\vee$ and is uniquely determined by the isomorphism
\begin{equation}\label{7.3.2019--3}{\rm Hom}_\ct(V\otimes U,W)\cong {\rm Hom}_\ct(V,U^\vee\otimes W),\end{equation}
which is functorial in $V$ and $W$. 
Choose $V=\one$ we get the isomorphism which will be frequently used later:
\begin{equation}\label{7.3.2019--2}
{\rm Hom}_\ct(U,W)\cong {\rm Hom}_\ct(\one,U^\vee\otimes W).\end{equation}
Choose $W=\one$ and $V=U^\vee$ in \eqref{7.3.2019--3}  we get the evaluation map ${\rm ev}_U:U^\vee\otimes U\to \one$, which corresponds to the identity morphism of $U^\vee$. Twisting ${\rm ev}_U$ with the symmetry $U\otimes U^\vee\to U^\vee\otimes U$ and  using \eqref{7.3.2019--3}, we get a morphism $U\to U^{\vee\vee}$. We say that $\ct$ is rigid if this morphism is an isomorphism for any $U$. In this case we can easily deduce  the canonical isomorphisms
\begin{equation}\label{7.3.2019--1}
{\rm Hom}_\ct(U,V)\cong {\rm Hom}_\ct(V^\vee,U^\vee).\end{equation}

A (neutral) fiber functor for $\ct$ is a $k$-linear faithful exact tensor functor  $\omega:\ct\to \mathbf{Vect}_k$. A tensor category $\ct$ equipped with such an $\omega$ is called a (neutral) Tannakian category. Tannakian duality tells us that  there exists an affine group scheme $G$ over $k$, defined as $\mm{Aut}^\otimes(\omega)$, such that $\omega$ factors as an equivalence $\ct\to \mm{Rep}_k(G)$ and the forgetful functor, where $\mm{Rep}_k(G)$ denotes the category of finite dimensional $k$-linear representations of $G$.  Moreover, any affine group scheme is obtained from its representation category in this way.

If $\eta:\ct\to\ct'$ is a tensor functor between Tannakian categories, compatible with the fibre functors, then it induces a homomorphism $\eta^\#$ between the Tannakian groups in the reverse direction. This yields a bijective correspondence between tensor functors between representation categories of two   group schemes and homomorphism between them (in the reverse direction).

We shall frequently use the following Tannakian criteria for homomorphisms between (affine) group schemes (cf. \cite[Proposition~2.21]{dm}. 
Given a homomorphism $\varphi:\Pi'\to \Pi$ of affine group schemes and let $\varphi^\#$ be the restriction functor $\mm{Rep}_k(\Pi)\to \mm{Rep}_k(\Pi')$. Then {\em
\begin{enumerate}
\item $\varphi$ is faithfully flat if and only if $\varphi^\#$ is fully faithful 
and for every representation $V$ of $\Pi$ and $U\,\subset\, \varphi^\#(V)$ a 
$\Pi'$--subrepresentation, then $U$ is also invariant under the action of $\Pi$.
Hereafter we shall say that $\varphi^\#$ is closed under taking subobjects. 
\item$\varphi$ is a closed immersion if and only if every representation of $\Pi'$  is isomorphic to a
subquotient of a representation of the form of $\varphi^\#(X)$, where $X$ is a representation of $\Pi$.
\end{enumerate}}

We first present an enhanced version of 
the criterion for a morphism of affine group schemes to be 
faithfully flat.  
\begin{lem}\label{12.12.2014--2}
Let $\varphi\,:\,\Pi'\,\longrightarrow\, \Pi$ be a homomorphism of affine
group schemes over $k$. Then $\varphi$ is quotient morphism (i.e. faithfully flat) if and only if 
\begin{enumerate}
\item the functor $\varphi^\#\,:\,\mathrm{Rep}(\Pi)\,\longrightarrow\,\mathrm{Rep}(\Pi')$ is fully faithful,
\end{enumerate}
and one among the following two equivalent conditions holds:
\begin{enumerate}
\item[(2)] Let $V$ be a representation of $\Pi$ and $L\,\subset\, \varphi^\#(V)$ a 
$\Pi'$--submodule of rank one. Then $L$ is also invariant under the action of $\Pi$.
\item[(2bis)] Let $V$ be a representation of $\Pi$ and $q\,:\,\varphi^\#(V)\,\longrightarrow\, L$ a quotient $\Pi'$--module of rank one. Then $L$ also has the structure of a $\Pi$-module and $q$ is equivariant. 
\end{enumerate}

In addition, if either $\Pi$  or $\Pi'$, is pro-finite, then condition (1) is already sufficient for $\ph$ to be faithfully flat. 
\end{lem}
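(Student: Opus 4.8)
The plan is to derive the lemma from the criterion recalled immediately above (i.e.\ \cite[Proposition~2.21]{dm}): $\varphi$ is faithfully flat if and only if $\varphi^\#$ is fully faithful and \emph{closed under taking subobjects}, meaning that every $\Pi'$-submodule of any $\varphi^\#(V)$ is automatically $\Pi$-invariant. With this in hand the implication ``only if'' is free: a faithfully flat $\varphi$ has $\varphi^\#$ closed under subobjects, and condition (2) is merely the rank-one instance of this. The whole content of the ``if'' direction is therefore to upgrade the rank-one hypothesis (2) to closedness under \emph{arbitrary} subobjects; once that is done, the quoted criterion finishes the proof.

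Before that, I would settle the equivalence (2)$\Leftrightarrow$(2bis), which is a routine dualization. Since $\varphi^\#$ is a tensor functor between rigid categories it commutes with duals, so $\varphi^\#(V)^\vee\cong\varphi^\#(V^\vee)$ canonically, compatibly with \eqref{7.3.2019--1}. A rank-one quotient $q:\varphi^\#(V)\to L$ is the same datum as a rank-one submodule $L^\vee\hookrightarrow\varphi^\#(V)^\vee=\varphi^\#(V^\vee)$; applying (2) to $V^\vee$ makes $L^\vee$ into a $\Pi$-submodule, and dualizing back equips $L$ with a $\Pi$-structure for which $q$ is equivariant. The converse is identical, so (2) holds for all $V$ precisely when (2bis) does.

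The crux is the passage from rank one to arbitrary rank in ``if'', which I would handle by an exterior-power reconstruction. Let $W=\varphi^\#(V)$ and let $U\subseteq W$ be a $\Pi'$-submodule of rank $r$ (the cases $r=0$ and $r=\dim W$ being trivial). As $\varphi^\#$ is an exact tensor functor, $\bigwedge^{r}U$ is a rank-one $\Pi'$-submodule of $\bigwedge^{r}W=\varphi^\#(\bigwedge^{r}V)$, so by (2) it is a $\Pi$-stable line $L\subseteq\bigwedge^{r}V$. Now consider the $\Pi$-equivariant wedge multiplication
\[
\mu:V\otimes\textstyle\bigwedge^{r}V\,\aro\,\textstyle\bigwedge^{r+1}V,\qquad v\otimes\eta\longmapsto v\wedge\eta,
\]
and restrict it to the $\Pi$-submodule $V\otimes L$. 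Applying the exact functor $\varphi^\#$ to $\ker(\mu|_{V\otimes L})$ and computing on $W$, one sees that $v\otimes\omega\mapsto v\wedge\omega$ (with $\omega$ a generator of $L=\bigwedge^{r}U$) has kernel exactly $U\otimes L$, by the standard fact that $v\wedge\omega=0$ iff $v\in U$. Hence $U\otimes L$ underlies a $\Pi$-submodule of $V\otimes L$. Finally, tensoring with the invertible object $L$ is an autoequivalence of $\mm{Rep}(\Pi)$, so it carries subobjects to subobjects; thus $U\otimes L$ being $\Pi$-stable forces $U$ itself to be $\Pi$-stable. This is exactly closedness under subobjects, and the main criterion applies. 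I expect this reconstruction step to be the only real obstacle; the rest is formal.

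For the final assertion, suppose $\Pi$ is pro-finite and only (1) holds. Write $\Pi=\varprojlim\Pi_i$ with $\Pi_i$ finite, and let $H\subseteq\Pi$ be the scheme-theoretic image of $\varphi$, with $H_i$ its image in $\Pi_i$; faithful flatness of $\varphi$ amounts to $H=\Pi$, i.e.\ $H_i=\Pi_i$ for all $i$. Taking $\mathrm{Hom}(\one,-)$, full faithfulness of $\varphi^\#$ gives $M^{\Pi}=M^{\Pi'}=M^{H}$ for every $\Pi$-representation $M$ (the middle equality is full faithfulness, the last because $\Pi'\to H$ is faithfully flat). I would test this on the regular representation $M=k[\Pi_i]$, viewed as a $\Pi$-module through $\Pi\to\Pi_i$: there $M^{\Pi}=k[\Pi_i]^{\Pi_i}=k$, while $M^{H}=k[\Pi_i]^{H_i}$ has dimension $[\Pi_i:H_i]$. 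The equality $\dim M^{\Pi}=\dim M^{H}$ forces $[\Pi_i:H_i]=1$, whence $H_i=\Pi_i$ for each $i$ and therefore $H=\Pi$, as required.
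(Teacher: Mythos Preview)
Your proof is correct and follows the same strategy as the paper's: dualize for (2)$\Leftrightarrow$(2bis), pass to the top exterior power to reduce arbitrary $\Pi'$-subobjects to the rank-one case, and test full faithfulness on the regular representation of a finite quotient for the pro-finite assertion. The only cosmetic difference is that your recovery of $U$ from the line $\bigwedge^{r}U$ (via the kernel of wedge-multiplication and untwisting by $L^{\vee}$) is carried out intrinsically in the Tannakian category, whereas the paper outsources the identical linear-algebra step to the $R$-point argument in Waterhouse's Appendix~2.
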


\begin{proof}We start by noting that (2) and (2bis) are equivalent: all that is needed is to take duals. 
In view of \cite[p.~139,~Proposition~2.21]{dm}, we only need to show that (1) and (2) 
together imply that $\varphi$ is faithfully flat. Take any $V\,\in\,\mathrm{Rep}_k(\Pi)$. 
The aforementioned result of \cite{dm} guarantees that it is sufficient to show that any 
$\Pi'$--submodule $W\,\subset\, V$ is also invariant under $\Pi$. Now, if $r\,=\,\dim_kW$, then 
$\bigwedge^rW\,\subset\, \bigwedge^rV$ is, by hypothesis, invariant under $\Pi$. This means that for 
all $k$--algebras $R$, the rank one $R$--submodule $\bigwedge^rW\otimes R\,\subset\, 
\bigwedge^rV\otimes R$ is invariant under all $g\,\in\, \mathrm{Aut}_R(V\otimes R)$ belonging to 
the image of $\Pi(R)$. The standard argument in the last paragraph in Appendix 2 on page 
152 of \cite{waterhouse} proves that this is only possible if $W\otimes R$ is invariant 
under all $g\,\in\,\Pi(R)$.

Let us now deal with the case where either $\Pi$ or  $\Pi'$ is  pro-finite and show that condition (1) is enough to show faithful flatness. Let $u:\Pi\to G$ be an algebraic   quotient of $\Pi$ and consider the following commutative diagram where $u$ and $u'$ are quotient morphisms and $i$ is a closed immersion:
\[
\xymatrix{\Pi'\ar[r]^\ph\ar[d]_{u'}&\Pi\ar[d]^u\\
G'\ar[r]_{i}&G.}
\] 
Note that, if $\Pi$ is pro-finite, then $G$ can be taken to be finite, in which case $G'$ is also finite. In case $\Pi'$ is pro-finite, then $G'$ is immediately finite. 
Now, if we endow $\co(G)$ with its right regular action and use (1), we conclude that  $\dim \co(G)^{G'}=\dim_k \co(G)^{G}=1$.
But this is only possible if $i$ is an isomorphism (say, because $\co(G)$ is locally free over $\co(G)^{G'}$ of rank $\dim_k\co(G')$ \cite[\S12, Theorem 1, p104]{mav}) and the rest of the proof follows effortlessly. 
\end{proof}

The next proposition is a version of the Tannakian criterion for the exactness of a short sequence of group schemes \cite[Theorem A.1(iii)]{EHS08}. Let us first notice that for a homomorphism $\varphi:H\to G$, with the restriction functor denoted by $\varphi^\#:{\rm Rep}_k(G)\to {\rm Rep}_k(H)$, the right adjoint to $\varphi^\#$ is the induction functor ${\rm ind}_H^G$. This functor however does not map to ${\rm Rep}_k(G)$ but rather its ind-category, i.e. the category of all representations of $G$. Below we shall consider the situation when a proper adjoint to $\varphi^\#$ exists. 

\begin{prp}\label{tannakian_exactness}
Let $\ph\,:\,H\,\longrightarrow\, G$ be a morphism of affine group schemes over $k$. 
Assume that the functor $\ph^\#\,:\,{\rm Rep}_k(G)\,\longrightarrow\,{\rm Rep}_k(H)$
has a \emph{faithful} right adjoint 
$\ph_\#\,:\,{\rm Rep}_k(H)\,\longrightarrow\,{\rm Rep}_k(G)$ 
which, in addition, is such that $\ph^\#(\ph_\#(\mathds 1))$ is a trivial object of ${\rm Rep}_k(H)$. Then $\ph$ is a closed and normal immersion. 

Moreover, if $Q$ is the cokernel of $\ph$, then ${\rm Rep}_k(Q)$ is the full subcategory of ${\rm Rep}_k(G)$ consisting of those objects on which  $H$ acts trivially. 
\end{prp}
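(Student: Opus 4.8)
The plan is to establish the claimed properties in sequence, reducing the normality to a concrete statement about the algebra $\ph_\#(\one)$.

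\emph{Closed immersion.} First I would record the purely formal fact that, for an adjunction $\ph^\#\dashv\ph_\#$, the right adjoint $\ph_\#$ is faithful if and only if every component of the counit $\ep_W\,:\,\ph^\#\ph_\#(W)\aro W$ is an epimorphism (this is dual to the familiar statement that a left adjoint is faithful exactly when its unit is a pointwise monomorphism). Since ${\rm Rep}_k(H)$ is abelian, "epimorphism" means surjective, so each $W\in{\rm Rep}_k(H)$ is a quotient of $\ph^\#(\ph_\#(W))$, that is, of $\ph^\#(X)$ with $X=\ph_\#(W)\in{\rm Rep}_k(G)$. By the closed immersion criterion stated before Lemma \ref{12.12.2014--2} (item (2) there, cf. \cite[Proposition~2.21]{dm}), this already shows that $\ph$ is a closed immersion.

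\emph{Construction of the quotient.} Let $\cC\subset{\rm Rep}_k(G)$ be the full subcategory of those $V$ for which $\ph^\#(V)$ is a trivial $H$-module. It is a tensor subcategory stable under subquotients and duals, hence neutral Tannakian, and the inclusion $\cC\hookrightarrow{\rm Rep}_k(G)$ is fully faithful and closed under subobjects; by Lemma \ref{12.12.2014--2} it corresponds to a faithfully flat $\ps\,:\,G\aro Q$ with $\ps^\#$ identifying ${\rm Rep}_k(Q)$ with $\cC$. Put $N\,=\,\ker(\ps)$, a \emph{normal} closed subgroup scheme of $G$. By definition of $\cC$ each element of $H$ acts trivially on every object of $\cC$, while $N$ is exactly the subgroup acting trivially on all of $\cC$; hence $H\subseteq N$. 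The whole proposition will follow once I prove the reverse inclusion $N\subseteq H$: this gives $H=N=\ker\ps$, so $\ph$ is a normal immersion, the cokernel of $\ph$ is $G/N=Q$, and by construction ${\rm Rep}_k(Q)=\cC$ is precisely the full subcategory of objects on which $H$ acts trivially, which is the "moreover" assertion.

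\emph{The inclusion $N\subseteq H$ (the crux).} Here the hypothesis on $\ph_\#(\one)$ enters. Since a right adjoint is unique up to isomorphism, $\ph_\#$ must agree with the induction functor ${\rm ind}_H^G$, so $A:=\ph_\#(\one)={\rm ind}_H^G(\one)$ is the coordinate algebra of the quotient $H\backslash G$, finite dimensional by hypothesis, carrying its natural $G$-action together with a distinguished base point $\bar e\in(H\backslash G)(k)$ whose stabiliser in $G$ is precisely $H$. The assumption that $\ph^\#(A)=\ph^\#\ph_\#(\one)$ is a trivial $H$-module says exactly that $A\in\cC$, whence $N=\ker\ps$ acts trivially on $A$. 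Acting trivially on $A$ it also acts trivially on $A^\vee$, so it fixes every linear functional on $A$, in particular the augmentation corresponding to $\bar e$; thus each $R$-point of $N$ stabilises $\bar e$ and therefore lies in $H(R)$. This yields $N\subseteq H$ and finishes the argument.

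\emph{Main obstacle.} The formal input (the counit criterion, the Tannakian subcategory $\cC$, and the bookkeeping around $\ps$) is routine, and the construction is in the spirit of the exactness criterion of \cite[Appendix]{EHS08}. The real content is the last step, and specifically the identification of $\ph_\#(\one)$ with the functions on $H\backslash G$ together with the fact that the $G$-stabiliser of its base point equals $H$: this is where both hypotheses are used in an essential way, faithfulness to realise $H$ as a closed subgroup and the triviality of $\ph^\#\ph_\#(\one)$ to place $A$ inside $\cC$ and thereby force $N$ to fix the base point. I would expect the careful justification of this stabiliser statement, and of the finiteness making $H\backslash G$ a genuine affine scheme, to be the part demanding the most care.
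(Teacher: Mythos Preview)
Your strategy is genuinely different from the paper's, and the first two parts (closed immersion via the counit, and the construction of $Q$ with $H\subseteq N:=\ker(G\to Q)$) are fine. The difficulty is exactly where you locate it, but your proposed resolution has a real gap.

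You argue that $N$ acts trivially on $A=\ph_\#(\one)$, hence fixes the augmentation $\ep\in A^\vee$, and then conclude $N\subseteq H$ because ``the stabiliser of $\bar e$ is $H$''. The problem is that you are tacitly identifying the homogeneous space $H\backslash G$ with $\mm{Spec}(A)$. In general $A=\co(G)^H=\Gamma(H\backslash G,\co)$, and the stabiliser of $\ep$ is only $\{g:f(g)=f(e)\text{ for all }f\in\co(G)^H\}$, which can be strictly larger than $H$; for instance if $G=SL_2$ and $H=B$ then $A=k$ and the stabiliser is all of $G$. Finite-dimensionality of $A$ alone does not force $H\backslash G$ to be affine. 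What is actually needed is that $H$ be \emph{observable} in $G$ (equivalently $H\backslash G$ quasi-affine), so that $H\backslash G\hookrightarrow\mm{Spec}(A)$ and the stabiliser computation goes through. This does follow from your hypotheses---since the counit is epi, every $H$-representation is a quotient of a restricted $G$-representation, hence by duality a subobject of one---but you do not invoke it, and it is not the ``finiteness'' you point to.

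The paper bypasses this geometric detour entirely. After the closed-immersion step it verifies the three conditions (a), (b), (c) of \cite[Theorem~A.1(iii)]{EHS08} for the sequence $H\to G\to Q$. Conditions (a) and (c) are immediate; for (b) one must show that for every $M\in\rep kG$ the maximal $H$-trivial subobject of $\ph^\#M$ is already a $G$-subobject. The key is the chain of adjunction/duality isomorphisms
\[
\mm{Hom}_H(\one,\ph^\#M)\;\cong\;\mm{Hom}_H(\ph^\#M^\vee,\one)\;\cong\;\mm{Hom}_G(M^\vee,\ph_\#\one)\;\cong\;\mm{Hom}_G\big((\ph_\#\one)^\vee,M\big),
\]
so that $M^H$ coincides with the trace of $(\ph_\#\one)^\vee$ in $M$; since $\ph^\#\ph_\#\one$ is trivial, this trace is an $H$-trivial $G$-subobject, and maximality follows from left-exactness. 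This argument stays inside the representation categories and uses the two hypotheses exactly once each, without appealing to the geometry of $H\backslash G$ or to observability.
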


\begin{proof} 
We first notice that the co-unit $\ph^\#\ph_\#(V)\,\longrightarrow\,V$ is an epimorphism (cf. \cite[IV.3, Theorem 1,  p.90]{maclane}). Indeed, the co-unit is a natural family of maps $\varepsilon_V:\varphi^\#\varphi_\#V\to V$ which defines the canonical isomorphism
$$\Phi:{\rm Hom}_G(M,\varphi_\#V)\xrightarrow{\cong}{\rm Hom}_H(\varphi^\#M,V),\quad
\forall M\in{\rm Rep}_k(G),\ V\in{\rm Rep}_k(H).$$
We have the following commutative diagram:
$$\xymatrix{
\varphi^\#M\ar[r]^{\varphi^\#f}\ar[rd]_{\Phi(f)}&\varphi^\#\varphi_\#V\ar[d]^{\varepsilon_V}\\ & V,}$$
for any $f:M\to \varphi_\#V$ in ${\rm Rep}_k(G)$.

Choose $M=\varphi_\#W$, $f=\varphi_\#g$ for some $g:W\to V$ in ${\rm Rep}_k(H)$, we have the following diagram with the upper triangle commutative and the outer square commutative (due to the functoriality of $\varepsilon$):
$$\xymatrix{
\varphi^\#M\ar[rr]^{\varphi^\#\varphi_\#g}\ar[rrd]_{\Phi(\varphi_\#g)}
\ar@{-->}[d]_{\varepsilon_W}&&\varphi^\#\varphi_\#V\ar[d]^{\varepsilon_V}\\ 
W\ar@{-->}[rr]_g&& V.
}$$
Hence the lower triangle also commutes. As this holds for any map $g:W\to V$ and $\varphi_\#$ is faithful,  we conclude that $\varepsilon_W$ has to be surjective. Consequently, condition of \cite[~Proposition~2.21]{dm} is fulfilled, hence the map $H\to G$ is a closed immersion.

We set out to prove that (a), (b) and (c) of \cite[Theorem A.1(iii)]{EHS08}, henceforth called simply 
conditions (a), (b) and (c), are satisfied for the diagram $H\,\longrightarrow\,G\,\longrightarrow\,Q$.  Condition (a) is assured by the construction of $Q$ from its category of 
representations. Condition (c) is guaranteed by the fact that the counit $\ph^\#\ph_\#(V)\,\longrightarrow\, V$ is an epimorphism for each 
$V$. We only need to show that (b) holds. 

 Notice that the functor $\varphi^\#$ commutes with taking dual objects. Given $M\in{\rm Rep}_k(G)$, we shall define a sub-object $M_0\subset M$ such that $\varphi^\#M_0$ is the maximal trivial sub-object of $\varphi^\#M$. Using the finiteness assumption on $\varphi_\#$ we have the following ``projection'' formula:
\begin{eqnarray*}
{\rm Hom}_H(\mathds1,\varphi^\#M)&=&{\rm Hom}_H(\varphi^\#M^\vee,\mathds1)\\
&=&{\rm Hom}_G(M^\vee,\varphi_\#\mathds1)\\
&=&{\rm Hom}_G((\varphi_\#\mathds1)^\vee,M).\end{eqnarray*}
We denote by $M_0$ the trace of $(\varphi_\#\mathds1)^\vee$ in $M$ (i.e. the sum of all images of that object in $M$). Then by assumption, $\varphi^\#\varphi_\#\mathds1$ being trivial, we conclude that $\varphi^\#M_0$ is trivial in ${\rm Rep}_k(H)$. It should be the maximal sub-object in $M$ since the functor ${\rm Hom}_H(\mathds1,-)$ is left exact and the above isomorphisms are functorial. Thus condition (b) is checked.  
\end{proof}

\bigskip 

Finally, we recall the construction of a torsor of isomorphisms between two fiber functors. Assume that $(\ct,\omega)$ is a Tannakian category with Tannakian group $G=\mm{Aut}^\otimes(\omega)$. Recall that $G$ is determined by the property: for each $k$-algebra $R$, $G(R)$ is the set of $R$-linear natural isomorphisms
$$\{\theta_U:\omega(U)\otimes R\to \omega(U)\otimes R,\quad
U\in \ct\},$$
which are compatible with the tensor structures.

Let $\eta$ be another $k$-linear exact tensor functor to 
$\mathbf{Coh}(X)$ - the category of coherent sheaves on a $k$-scheme $X$. Then there exists a $G$-torsor $p:P\to X$ representing the isomorphism between $\omega$ and $\theta$ in the following sense: for each morphism $f:Y\to X$ there is a functorial bijective map between the set ${\rm Mor}_X(Y,P)$ and the set of natural isomorphisms of sheaves on $Y$:
$$\{\theta_U:\omega(U)\otimes_k\mathcal O_Y\to f^*(\omega(U)),\quad U\in \ct\}.$$
See \cite[Theorem~3.2]{dm} or \cite[1.11]{Del90}, see also \cite{nori82}. 

In the special case, when $X$ admits a $k$-point $x$ and 
$\omega=x^*\circ\theta$, this equality yields, by means of the 
description above, a point $u\in P(k)$ sitting above $x$. On the other hand, the universal identity morphism $\mm{id}:P\to P$ yields a natural isomorphism
$$\eta_U:\omega(U)\otimes_k\mathcal O_P\to p^*(\omega(U)).$$
Assume that $\ct$ is equivalent to $\mm{Rep}_k(G)$ by means of the functor $x^*\circ\omega$. Then the above isomorphism and faithful flat descent imply that the functor $\omega$, considered as functor from $\mm{Rep}_k(G)$ to $\mm{Coh}(X)$ can be given as
$$\omega(V)=(V\otimes \mathcal O_P)^G,\quad V\in\mm{Rep}_k(G).$$

\section{Preliminaries on genuinely ramified finite morphisms}\label{genuinely_ramified}

We reinterpret a theme brought to our attention by the work 
\cite[\S~6]{Balaji-Parameswaran}. Since the underlying assumptions in 
\cite{Balaji-Parameswaran} are much too restrictive, and since a fundamental 
point goes unmentioned (which is Proposition \ref{10.11.2017--1} below), we think it well to 
interpose this section.

We remind the reader that if $\ph\,:\,N\,\longrightarrow\, M$ is a Galois-finite morphism of normal 
$k$-varieties, then $\mm{Aut}_{M}(N)=\mm{Gal}(\ph)$.

\begin{dfn}[{\cite[\S~6]{Balaji-Parameswaran}}]
Let $f\,:\,Y\,\longrightarrow\, X$ be a finite surjective morphism of $k$-varieties. We say
that $f$ is genuinely ramified if $f$ is generically etale and if   the only possible factorization of $f$ as a composition of morphisms of varieties 
\[\xymatrix{Y\ar[r]& X'\ar[r]^{\text{etale}}&X}
\]
is 
\[\xymatrix{Y\ar[r]^f& X\ar[r]^{\id}&X.}
\]

\end{dfn}

Under this terminology, we can reinterpret a (probably  well-known) exercise as follows: 

\begin{lem}\label{03.11.2017--1}Let $f\,:\,Y\,\longrightarrow\, X$ be a finite-Galois morphism between normal $k$-varieties. The following statements hold: 
\begin{enumerate}
\item Assume that ${\rm Gal}(f)$ is generated by elements having at least one fixed point. Then $f$ is genuinely ramified.

\item Write $I\triangleleft\mm{Gal}(f)$ for the subgroup generated by all elements of 
$\mm{Gal}(f)$ fixing at least one point. If $\chi\,:\,Y\,\longrightarrow\,M$ is the quotient of $Y$ by 
$I$ with $u\,:\,M\,\longrightarrow\, X$ being the canonical arrow, then $\chi$ is genuinely ramified while $u$ 
is etale.

\item If $f$ is genuinely ramified, then $\mm{Gal}(f)$ is generated by the
elements having at least one fixed point. 
\end{enumerate}
\end{lem}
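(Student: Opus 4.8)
The plan is to deduce statement (3) directly from the factorization already constructed in statement (2), so that (3) becomes essentially a formal consequence. Write $G=\mm{Gal}(f)$ and let $I\triangleleft G$ be the subgroup generated by all elements of $G$ fixing at least one point; the goal is to show $I=G$. Following (2), I would form the quotient $\chi\,:\,Y\to M$ of $Y$ by $I$ and let $u\,:\,M\to X$ be the canonical morphism, so that $f=u\circ\chi$. Part (2) tells us precisely that $u$ is etale (and that $\chi$ is genuinely ramified, though only the former is needed here). Observe that $M$ is again a normal variety, being the quotient of the normal variety $Y$ by the finite group $I$, so that $Y\xrightarrow{\chi}M\xrightarrow{u}X$ is an honest factorization in the category of varieties.

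Next I would invoke the hypothesis that $f$ is genuinely ramified. The factorization $Y\xrightarrow{\chi}M\xrightarrow{u}X$ exhibits $f$ as a composite whose second arrow $u$ is etale; by the very definition of genuine ramification, the only such factorization is the trivial one $Y\xrightarrow{f}X\xrightarrow{\id}X$. Consequently $u$ must be an isomorphism.

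To translate this into a statement about $G$, I would pass to function fields. Since $M=I\backslash Y$ and $X=G\backslash Y$, we have $k(M)=k(Y)^I$ and $k(X)=k(Y)^G$, and the Galois correspondence gives $\deg u=[k(Y)^I:k(Y)^G]=[G:I]$. As $u$ is an isomorphism, $[G:I]=1$, whence $I=G$; that is, $G$ is generated by its elements having a fixed point.

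The argument is short because all the substance is already packaged in (2): the content of (3) is merely that a genuinely ramified morphism admits no nontrivial etale quotient, and (2) manufactures exactly the etale quotient $u$ that must therefore collapse. The only points requiring care, and the closest thing to an obstacle, are checking that $M$ genuinely qualifies as a variety so the diagram is a legitimate factorization under the definition, and confirming that ``$u$ is an isomorphism'' forces the \emph{index} $[G:I]$ to be trivial rather than merely the ramification; both are handled by the function-field computation above. This direction is thus the easy converse to (1), with (2) doing the real work.
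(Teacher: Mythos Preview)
Your proposal addresses only part (3), taking (1) and (2) as already established; for (3), your argument is correct and is exactly the paper's approach—the paper's entire proof of (3) is the single sentence ``This is in fact a consequence of (2).'' Your added detail (the function-field computation $[G:I]=\deg u=1$) is a harmless elaboration of what the paper leaves implicit.
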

 
\begin{proof}(1) 
Let $Y\stackrel{\chi}{\longrightarrow} M\stackrel{u}{\longrightarrow}X$ be a factorization
of $f$ with $u$ 
finite  and etale. Let us firstly suppose that $u$ is finite-Galois. 
Then, the canonical homomorphism ${\rm Gal}(f)\,\longrightarrow\,
\mm{Gal}(u)$ is 
surjective, so that $\mm{Gal}(u)$ is generated by the elements having at least one fixed 
point. Since no $\al\in{\rm Aut}_X(M)\setminus\{{\rm id}\}$ can have a fixed point \cite[I, Corollary 5.4]{SGA1}, we conclude that  $\mm{Gal}(u)=\{e\}$. Let us now deal with the general case; for that we shall require the construction of   the ``Galois closure'' of $u$  \cite[V, \S~4(g)]{SGA1}.

Let $\Om$ be an algebraic closure of $k(Y)$ and let   $\bb y\in Y(\Om)$, $\bb m\in M(\Om)$, and $\bb x\in X(\Om)$ be the associated $\Om$-points. If $\bb m_1=\bb m$, and $\{\bb m_1,\ldots,\bb m_d\}$ is the set of $\Om$-points of $M$ above $\bb x$, basic Galois theory and the normality of $Y$ allow us to find, for each $i$, an \emph{$X$-automorphism} $g_i\,:\,Y
\,\longrightarrow\, Y$ such that $\chi(g_i(\bb y))\,=\,\bb m_i$. (In particular, $g_1\,=\,{\rm id}_Y$.) Consequently, we have constructed a morphism of $X$-schemes 
\[
\wt\chi\,:\,Y\,\aro\, \underbrace{M\ti_{X}\cdots\ti_X M}_{d} 
\]
satisfying: 
\begin{itemize} \item The composition $\mm{pr}_1\circ\wt\chi$ is none other than $\chi$, and
\item the image of $\bb y$ is $(\bb m_1,\ldots,\bb m_d)$.
\end{itemize}
Now, the \emph{connected component} of the above fiber
product containing $(\bb m_1,\ldots,\bb m_d)$, call it $\wt M$, is a
finite-Galois etale covering of $X$. (This is explained in \cite[V, \S4(g)]{SGA1}, but
the reader should do an exercise on groups acting on finite sets). Using the
factorization $Y\stackrel{\wt \chi}{\longrightarrow}\wt M\stackrel{\wt u}{\longrightarrow}
X$, we can apply our previous result to conclude that $\wt u$ is an isomorphism, so that $u$ is also an isomorphism.

(2)  Let $I\triangleleft \mm{Gal}(\chi)$ be the
(normal) subgroup generated by the automorphisms
fixing at least one point, and write $\chi:Y\longrightarrow M$ for the quotient of $Y$
by $I$ \cite[\S~7, Theorem]{mav}. It follows that 
the canonical morphism $u:M\longrightarrow X$ is finite-Galois, and
${\rm Gal}(u)\simeq\mm{Gal}(f)/I$. We contend that 
\begin{enumerate}\item[(i)]
the morphism $u$ is etale, and
\item[(ii)] $\chi$ is genuinely ramified. 
\end{enumerate}
To prove (i), it is suffices to show that $\mm{Gal}(u)$ acts freely on the points of 
$M$ \cite[\S~7, Theorem]{mav}. Since the set of points of $M$ is just the quotient of 
the set of points of $Y$ (loc. cit.), the verification is quite
straight-forward. So let $I\po 
y\,=\,m\in M$, and let $s\,\in\,\mm{Gal}(f)$ be such that $ s ( m)\,=\,m$. This means that
$s(y)\,=\,i(y)$, 
where $i\,\in\, I$. Hence, we conclude that $s\,\in\, I$, so that $ s :M\longrightarrow
M$ is just the identity map, which completes the verification of (i).

To establish (ii), we note that by 
construction, $\mm{Gal}(\chi)\,=\,I$, so that (1) can be directly employed.

(3): This is in fact a consequence of (2). 
\end{proof}

The next result connects genuinely ramified coverings with the theory of the etale fundamental group.

\begin{prp}\label{10.11.2017--1}Let $f\,:\,Y\,\longrightarrow\, X$ be a finite and surjective morphism of $k$-varieties. The morphism $f$ is genuinely ramified if and only if the morphism between etale fundamental groups $ f_{\natural}:\pi_1(Y)\,\longrightarrow\,\pi_1(X)$ is surjective. 
\end{prp}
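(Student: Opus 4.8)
The plan is to bypass the Galois-closure bookkeeping entirely and argue directly through the Galois correspondence for the étale fundamental group together with a scrap of profinite group theory; Lemma \ref{03.11.2017--1} then enters only as an optional cross-check. Throughout I work with $X$ (hence, in the applications, also $Y$) normal, so that every connected finite étale cover of $X$ is again a variety and the factorizations occurring in the definition of genuine ramification are exactly the connected finite étale covers $X'\to X$ through which $f$ factors. Fix compatible geometric base points $\bar y\mapsto\bar x$ and abbreviate $\pi_1(X)=\pi_1(X,\bar x)$, etc. The two facts I rely on are standard \cite{SGA1}: first, connected finite étale covers $X'\to X$ correspond to open subgroups $U\le\pi_1(X)$ up to conjugacy, with $\deg(X'/X)=[\pi_1(X):U]$ and $U\cong\pi_1(X')$; second, the lifting criterion, namely that for the connected variety $Y$ the morphism $f$ factors as $Y\to X'\to X$ if and only if $f_\natural(\pi_1(Y))$ is subconjugate to $\pi_1(X')$ inside $\pi_1(X)$.

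Granting these, the part of ``surjective $\Rightarrow$ genuinely ramified'' concerning étale factorizations is immediate: if $f$ factored through a connected finite étale $X'\to X$ of degree $>1$, then by the lifting criterion $f_\natural(\pi_1(Y))\subseteq\pi_1(X')$, an open subgroup of index $>1$, so $f_\natural$ would not be surjective. Contrapositively, surjectivity forbids any nontrivial étale factorization. (The remaining requirement of the definition, that $f$ be generically étale, is discussed at the end.)

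For ``genuinely ramified $\Rightarrow$ surjective'' I would argue by contradiction. The image $H:=f_\natural(\pi_1(Y))$ is the continuous image of a profinite group, hence compact, hence closed in $\pi_1(X)$. If $H\ne\pi_1(X)$, then since $\overline H=\bigcap_N HN$ as $N$ runs over the open normal subgroups of $\pi_1(X)$ and $\overline H=H\ne\pi_1(X)$, there must be an open normal $N$ with $HN\ne\pi_1(X)$. Put $U:=HN$: this is a proper open subgroup containing $H$, corresponding to a connected finite étale cover $X'\to X$ of degree $[\pi_1(X):U]>1$, which by normality of $X$ is a variety. Because $H\subseteq U=\pi_1(X')$, the lifting criterion yields a factorization $Y\to X'\to X$ of $f$ with $X'\to X$ étale of degree $>1$, contradicting genuine ramification. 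Hence $H=\pi_1(X)$ and $f_\natural$ is surjective. Note this direction does not even use that $f$ is generically étale.

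The one point demanding care is the generically-étale clause, and I expect it to be the only genuine obstacle. The étale fundamental group is insensitive to purely inseparable phenomena: a finite purely inseparable surjection is a universal homeomorphism and so induces an isomorphism on $\pi_1$. Consequently surjectivity of $f_\natural$ cannot by itself force $f$ to be generically étale, and this must be fed in as a hypothesis — harmless here, since in all our applications $f$ is finite-Galois (the extension $k(Y)/k(X)$ being Galois), hence automatically generically étale. Concretely, factoring $f$ through its maximal separable subcover $Y\to X_{\mathrm s}\to X$, the purely inseparable part $Y\to X_{\mathrm s}$ contributes an isomorphism on $\pi_1$, so the whole equivalence is governed by the generically étale morphism $X_{\mathrm s}\to X$. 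The real technical heart is therefore the translation of the purely group-theoretic properness of $\mathrm{im}(f_\natural)$ into a bona fide nontrivial étale cover \emph{by a variety} via the lifting criterion, while keeping the inseparable direction under control; as a sanity check, over normal varieties Lemma \ref{03.11.2017--1} identifies $\mathrm{im}(f_\natural)$ with $\pi_1$ of the maximal étale subcover $\widetilde{Y}/I$, recovering the same equivalence through the fixed-point description of $I$.
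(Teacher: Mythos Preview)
Your proof is correct and follows essentially the same route as the paper's: both translate the question into whether $f_\natural(\pi_1(Y))$ lies in a proper open subgroup of $\pi_1(X)$, and then use the corresponding connected \'etale cover to manufacture a forbidden factorization. The paper phrases this at the level of a finite quotient $\pi_1(X)\twoheadrightarrow\mathfrak g$, building the cover from the $\pi_1(X)$-set $\mathfrak g/\mathfrak h$ and exhibiting the lift via the fixed point of $\pi_1(Y)$ on that set, whereas you invoke the lifting criterion directly and argue with closed versus open subgroups of the profinite group; the content is the same. Your explicit attention to the normality hypothesis on $X$ (needed so that the connected \'etale cover produced is again a \emph{variety}, as the definition of genuinely ramified requires) and to the generically-\'etale clause (which surjectivity of $f_\natural$ alone cannot supply, as a Frobenius or any purely inseparable surjection shows) are careful points that the paper passes over in silence.
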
 

\begin{proof}We shall only prove the ``only if'' clause; the verification of the other one is very simple. Hence we suppose that $f$ is genuinely ramified. 
Let $\pi_1(X)\,\longrightarrow\,\g g$ be a finite quotient, and let $\g h<\g g$ be the image of $\pi_1(Y)$. We endow $E:=\g g/\g h$ with the canonical left action of $\pi_1(X)$. Let $X'\,\longrightarrow\, X$ be the etale covering associated to $E$. Since $\pi_1(Y)$ leaves one point of $E$ fixed, it follows that the etale covering $Y'\,:=\,X'\ti_XY\,\longrightarrow\, Y$ (which is associated to the $\pi_1(Y)$-set $E$) must have a connected component isomorphic to $Y$. In this way, we obtain a section $\si\,:\,Y\,\longrightarrow\, Y'$, and then a lifting 
of $f\,:\,Y\,\longrightarrow\,X$ to $X'$. But this forces $X'\,\longrightarrow\, X$ to be an isomorphism, and we conclude that $\#E=1$. Since $\g g$ is arbitrary, we conclude that $ f_{\natural}\pi_1(Y)=\pi_1(X)$.
\end{proof}

\section{Preliminaries on $F$-divided sheaves}\label{preliminaries_F-divided}

In what follows, $X$ stands 
for a noetherian $k$-scheme; we do \emph{not} assume $X$ to be of finite type. 
We wish to develop in the following lines some bases for a theory of $F$-divided sheaves on $X$ 
by employing the method of \cite{dS07}.

\begin{dfn}The category of $F$-divided sheaves is the category $\bb{Fdiv}(X)$ such that: 
\begin{itemize}\item[]\textbf{Objects} are sequences $\{\ce_n,\si_n\}_{n\in\NN}$ where $\ce_n$ is a coherent $\co_X$-module and 
\[\si_n\,:\,F_X^*\ce_{n+1}\,\stackrel{\sim}{\longrightarrow}\, \ce_n\]
is an isomorphism.
\item[]\textbf{Arrows} between $\{\ce_n,\si_n\}_{n\in\NN}$ and
$\{\ce_n',\si_n'\}_{n\in\NN}$ are families of morphisms $\al_n\,:\,\ce_n\,\longrightarrow\,\ce_n'$ such that
$\si_n'\circ (F_X^*{\al_{n+1}})\,=\,\al_n\circ\si_n$. 
\end{itemize}
\end{dfn}

The construction of $\bb{Fdiv}$ is evidently functorial, and if $f\,:\,Y
\,\longrightarrow\, X$ is an arrow of $k$-schemes, then the obvious functor $\bb{Fdiv}(X)
\,\longrightarrow\,\bb{Fdiv}(Y)$ constructed from the pull-back functor $f^*
\,:\,\bb{Coh}(X)\,\longrightarrow\,\bb{Coh}(Y)$ is denoted by $f^\#$. 
In addition, as explained in 
\cite[2.2]{dS07},   $\bb{Fdiv}(X)$ is canonically $k$-linear.

As expected, if $X$ happens to be the spectrum of a noetherian $k$-algebra $A$, we 
shall write $\bb{Fdiv}(A)$ instead of $\bb{Fdiv}(X)$, and speak about $F$-divided 
\emph{modules}.

\begin{lem}\label{08.07.2016--4}
For any $F$-divided module $\ce\,=\,\{\ce_n\}_{n\in\NN}$ over $X$, the $\co_X$-module $\ce_0$ is
locally free. 
\end{lem}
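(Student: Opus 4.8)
The plan is to argue locally on $X$ and to exploit the defining isomorphisms $\si_n$ in order to realize $\ce_0$ as a Frobenius pull-back of arbitrarily high order. Since $\ce_0$ is coherent on the noetherian scheme $X$, it is locally free if and only if every stalk $M:=\ce_{0,x}$ is a free module over the noetherian local ring $R:=\co_{X,x}$; here I use the standard fact that a finitely presented module which is free at a point is free on a neighbourhood of that point, so it suffices to treat each $x\in X$. Because the absolute Frobenius $F_X$ is the identity on the underlying space, composing the isomorphisms $\si_0,F_X^*\si_1,(F_X^2)^*\si_2,\dots$ yields, for every $n$, an isomorphism $\ce_0\cong (F_X^n)^*\ce_n$, which on stalks at $x$ reads $M\cong (F_R^n)^*M_n$, where $M_n:=\ce_{n,x}$ and $F_R\colon R\to R$ is the $p$-power map. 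Thus $M$ is simultaneously, for every $n$, an $n$-fold Frobenius pull-back of a finitely generated $R$-module.

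The main tool will be Fitting ideals. Writing $\mu=\mu(M):=\dim_\ka M/\g m M$ for the minimal number of generators ($\ka$ the residue field, $\g m$ the maximal ideal), recall that $M$ is free over the local ring $R$ if and only if $\mathrm{Fitt}_{\mu-1}(M)=0$: a minimal presentation $R^a\xrightarrow{\phi}R^\mu\to M\to 0$ has $\mathrm{im}\,\phi\subset\g m R^\mu$, and $\mathrm{Fitt}_{\mu-1}(M)$ is generated by the entries of $\phi$, so its vanishing forces $\phi=0$. The key computation is that the $n$-fold Frobenius pull-back $(F_R^n)^*$ raises each entry of a presentation matrix to its $p^n$-th power; since the Frobenius endomorphism $a\mapsto a^{p^n}$ is a ring homomorphism of $R$, it commutes with the determinant (Leibniz) polynomial, so each minor of the pulled-back matrix is the $p^n$-th power of the corresponding minor of the original. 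I then take a minimal presentation $R^{a_n}\xrightarrow{\phi_n}R^{\mu_n}\to M_n\to 0$, whose entries lie in $\g m$, and pull it back to obtain a presentation of $M$. Using the elementary inequality $\mu=\mu(M)\le \mu_n=\mu(M_n)$ (pull-back preserves surjections), the minors that generate $\mathrm{Fitt}_{\mu-1}(M)$ have size $\mu_n-\mu+1\ge 1$, hence are $p^n$-th powers of elements of $\g m$; therefore $\mathrm{Fitt}_{\mu-1}(M)\subset\g m^{p^n}$.

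Since this holds for every $n$, the Krull intersection theorem gives $\mathrm{Fitt}_{\mu-1}(M)\subset\bigcap_n\g m^{p^n}=\bigcap_m\g m^m=0$, and hence $M$ is free; as $x$ was arbitrary, $\ce_0$ is locally free. I expect the only genuine obstacle to be that the absolute Frobenius of a general noetherian (possibly non-regular, non-finite-type) scheme need not be flat, so one cannot simply invoke flatness of $F_X^*$ as one does on smooth schemes; the device that replaces flatness here is precisely the fact that $M\cong(F_R^n)^*M_n$ for \emph{all} $n$, which feeds the exponents $p^n$ into the Krull intersection. The two computational points to verify carefully are the behaviour of minors under the entrywise $p^n$-th power map and the harmless inequality $\mu(M)\le\mu(M_n)$, which ensures that the relevant minors have positive size and thus lie in $\g m$ before one takes $p^n$-th powers.
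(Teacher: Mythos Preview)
Your argument is correct and follows essentially the same route as the paper: reduce to a noetherian local ring, use Fitting ideals together with the behaviour of Frobenius pull-back, and conclude via the Krull intersection theorem. The only cosmetic differences are that the paper invokes the base-change formula $\mathrm{Fitt}_i(E_0)=\mathrm{Fitt}_i(E_n)^{[p^n]}$ directly (citing \cite[Corollary 20.5]{eisenbud}) and then argues that every $\mathrm{Fitt}_i(E_0)$ is either $(0)$ or $(1)$, whereas you re-derive that formula by hand via presentation matrices and focus only on the single ideal $\mathrm{Fitt}_{\mu-1}(M)$; both lead to the same conclusion.
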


\begin{proof}
It suffices to show that any $F$-divided module $E\,=\,\{E_n,\,\si_n\}_{n=0}^\infty$ over
a noetherian local ring $A$ is free. Let $\g m$ stand for the maximal ideal of $A$. If
$M$ is an $A$-module of finite type, we write $\mm{Fitt}_i(M)$ for the $i$-th Fitting
ideal of $M$ \cite[20.2, 492ff]{eisenbud}. Then, for each $n\,\in\,\NN$, we know that
$\mm{Fitt}_i(E_n)^{[p^m]}\,=\,\mm{Fitt}_i(E_0)$, where for an ideal $\g a\,\subset\, A$,
the notation $\g a^{[p^n]}$ stands for the ideal
of $A$ generated by the image of $\g a$ under the
homomorphism $a\,\longmapsto\, a^{p^n}$ \cite[Corollary 20.5, p. 494]{eisenbud}. We assume
that $\mm{Fitt}_i(E_0)\,\not=\,(1)$. In this case, $\mm{Fitt}_i(E_n)\,\not=\,(1)$, so that
$\mm{Fitt}_i(E_n)\,\subset\, \g m$. But then $$\mm{Fitt}_i(E_0)\,\subset\, {\g m}^{[p^n]}
\,\subset\,\g m^{p^n}\, .$$ As
the algebra $A$ is separated with respect to the $\g m$-adic topology
\cite[8.10, p.~60]{matsumura}, we conclude that $\mm{Fitt}_i(E_0)\,=\,(0)$. Hence
$E_0$ is projective \cite[Proposition 20.8, p.~495]{eisenbud}, which implies that
$E_0$ is free because $A$ is local.
\end{proof}

In \cite[Proposition 1.5]{Gi}, one finds a rather meaningful statement concerning 
$F$-divided sheaves on formal schemes which, as a consequence, asserts that, up to 
isomorphism, only the direct sums of the unit object appear in the category of 
$F$-divided modules over $k\llbracket x_1,,\ldots,x_d\rrbracket$ (cf. Corollary 1.6 of \cite{Gi}). We 
believe that another explanation, in a simpler setting, might be useful (as it will 
be in the proof of Lemma \ref{28.06.2016--3} further ahead).

\begin{lem}\label{08.05.2019--1}
Let $(A,\mathfrak m,k)$ be a complete local $k$-algebra. Then, any $F$-divided module over $A$ is isomorphic to a trivial $F$-divided module
$\{A^r,{\rm canonical}\}$.
\end{lem}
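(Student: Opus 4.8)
The plan is to show that any $F$-divided module $E = \{E_n, \sigma_n\}$ over a complete local $k$-algebra $(A, \mathfrak m, k)$ is trivial by building an isomorphism with $\{A^r, \mathrm{canonical}\}$. By Lemma \ref{08.07.2016--4} each $E_n$ is free of some rank $r_n$, and since $\sigma_n: F_A^* E_{n+1} \xrightarrow{\sim} E_n$ is an isomorphism and $F_A^*$ preserves rank, all $r_n$ equal a common value $r$. The heart of the matter is to produce a compatible basis, and the natural device is to work at the level of the residue field and then lift by completeness. First I would reduce $E_n$ modulo $\mathfrak m$ to get $k$-vector spaces $\overline{E_n} = E_n \otimes_A k$; since the absolute Frobenius $F_A$ reduces to the Frobenius $F_k$ on $k$, which is an isomorphism as $k$ is perfect (algebraically closed), the maps $\sigma_n$ descend to $k$-linear isomorphisms $\overline{\sigma}_n$ relating the Frobenius-twists of the $\overline{E_{n+1}}$ to $\overline{E_n}$.

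Next I would use the structure of the residue field. Over the algebraically closed (hence perfect) field $k$, an $F$-divided module is nothing but a sequence of $k$-vector spaces with Frobenius-semilinear identifications, and such an object is automatically trivial: choosing a basis of $\overline{E_0}$, one transports it backwards through the $\overline{\sigma}_n$, using that Frobenius is bijective on $k$ to solve the relevant $p$-power equations, thereby obtaining a system of bases $\{\overline{e}^{(n)}_i\}$ with $\overline{\sigma}_n(F^* \overline{e}^{(n+1)}_i) = \overline{e}^{(n)}_i$. Concretely, the residual datum defines an $F$-divided module over $k$, and over a perfect field every such module is a sum of unit objects; this is the analogue over $\operatorname{Spec} k$ of the statement we are proving, and it is elementary because semilinear algebra over a perfect field always admits a Frobenius-invariant basis.

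I would then lift this trivialization to $A$ by successive approximation, which is where completeness enters decisively. Starting from a basis $e^{(0)}_1, \dots, e^{(0)}_r$ of $E_0$ reducing to the chosen $\overline{e}^{(0)}_i$, I use the isomorphisms $\sigma_n$ to define basis candidates at each level and measure the failure of compatibility as elements of $\mathfrak m \cdot E_n$. The key observation is that the transition maps involve the Frobenius, so each correction step improves the error from $\mathfrak m^j$ to $\mathfrak m^{jp}$ (more precisely, $F_A^*$ raises the $\mathfrak m$-adic order by a factor of $p$). This gives a rapidly converging iteration whose limit exists by $\mathfrak m$-adic completeness of $A$, producing genuine compatible bases $\{e^{(n)}_i\}$ with $\sigma_n(F_A^* e^{(n+1)}_i) = e^{(n)}_i$, which is exactly an isomorphism $E \cong \{A^r, \mathrm{canonical}\}$.

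The main obstacle I anticipate is making the successive-approximation argument precise and uniform across all the indices $n$ simultaneously: one must lift compatibly over the whole infinite tower $\{E_n\}$ rather than one module at a time, and show the errors are controlled at every level so that the $\mathfrak m$-adic limit genuinely assembles into a morphism of $F$-divided modules (i.e.\ commutes with \emph{every} $\sigma_n$). The mechanism that makes this work is precisely that the Frobenius pullback contracts the $\mathfrak m$-adic topology by the factor $p$, so I would be careful to set up the induction so that fixing the error at level $0$ and propagating through $F_A^*$ forces convergence everywhere at once; this Frobenius-contraction estimate is the technical crux, and it is also what explains why the hypothesis of completeness, rather than mere Henselian-ness or the Noetherian condition of Lemma \ref{08.07.2016--4}, is needed here.
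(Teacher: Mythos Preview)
Your proposal is correct and follows essentially the same approach as the paper: trivialize modulo $\mathfrak m$ (equivalently, choose bases $\beta_n:A^r\to E_n$ so that the induced transition matrices $\varphi_n$ satisfy $\varphi_n\equiv\mathrm{id}\bmod\mathfrak m$), then use completeness together with the Frobenius contraction $F^n(\mathfrak m)\subset\mathfrak m^{[p^n]}$ to converge. The paper dissolves your ``main obstacle'' by writing down, for each $m$, the convergent infinite product $\Phi_m=\lim_{n}\varphi_m\cdot F(\varphi_{m+1})\cdots F^{n}(\varphi_{m+n})$ and checking the single identity $\varphi_m\cdot F(\Phi_{m+1})=\Phi_m$ a posteriori, so no simultaneous inductive correction across the tower is required.
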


\begin{proof}
Let $\{E_n,\si_n\}_{n\in\NN}$ be an $F$-divided module over $A$. Let $\beta_n\,:\,A^r 
\,\longrightarrow\, E_n$ be an isomorphism (see Lemma \ref{08.07.2016--4}) and let
$\varphi_n\,:\,A^r\,\longrightarrow\, A^r$ correspond to $\sigma_n$ under it; said differently, each diagram
\[
\xymatrix{ A\otimes_{F,A}E_{n+1}\ar[rr]^-{\sigma_n}&& E_n\\ \ar[u]^-{F^*(\beta_{n+1})}A^r\ar[rr]_{\varphi_n}&&A^r \ar[u]_-{\beta_{n}} }
\]
commutes. 
Clearly, proceeding inductively, it is possible to choose $\beta_n$ so that
$\ph_n \equiv\mm \id \mod\mathfrak m$. 
This being so, it follows that $F^n(\ph_n)\equiv {\rm id}\mod \mathfrak m^{[p^n]}$, and hence 
the difference 
\[
\begin{split}
\varphi_0\cdot F(\varphi_1)\cdots F^m(\varphi_m)-\varphi_0\cdot F(\varphi_1)\cdots F^n(\varphi_n)=\\=\varphi_0\cdot F(\varphi_1)\cdots F^m(\varphi_m)\cdot \left[{\rm id}- F^{m+1}(\ph_{m+1})\cdots F^n(\varphi_n) \right]
\end{split}
\]
is congruent to 0 modulo $\mathfrak m^{[p^{m+1}]}$, and a fortiori modulo $\mathfrak m^{p^{m+1}}$. Since ${\rm End}(A^r)$ is complete for the $\mathfrak m$-adic topology, the limit 
\[
\Phi_0:=\lim_{n\to\infty} \varphi_0\po F(\varphi_1)\cdots F^n(\varphi_n)
\]
exists in ${\rm End}(A^r)$.
It is not hard to see that in fact $\Phi_0$ belongs to ${\rm Aut}(A^r)$. 
More generally, write 
\[
\Phi_m:=\lim_{n\to\infty} \varphi_m\cdot F(\varphi_{m+1})\cdots F^{n}(\varphi_{m+n});
\]
this is an element of ${\rm Aut}(A^r)$. Then $\varphi_m\cdot F(\Phi_{m+1})=\Phi_m$, which gives an arrow in ${\bf Fdiv}(A)$:
\[
\{\beta_n\Phi_n\}:\mathds1^r\longrightarrow \{E_n,\sigma_n\}.
\]
It only takes a moments thought to see that $\{\beta_n\Phi_n\}$ is in fact an isomorphism. 
\end{proof}

Let $\{\ph_n\}\,:\,\{\ce_n,\si_n\} \,\longrightarrow\, \{\ce'_n,\si_n'\}$ be
a morphism of $\bb{Fdiv}(X)$ and write $\cC_n\,=\,\cC\!oker(\ph_n)$, $\ci_n\,=\,\ci\!m(\ph_n)$ and $\ck_n\,=\,\ck\!er(\ph_n)$. Then, the family
$\cC_n$ is also $F$-divided, that is, there exist unique isomorphisms 
\[
\tau_n\,:\,F_X^*\cC_{n+1}\,\aro\, \cC_n
\]
rendering the diagrams
\[
\xymatrix{\ar[d]^{\si_n}F_X^*\ce_{n+1}\ar[r]^{\ph_{n+1}}& F_X^*\ce_{n+1}'\ar[d]^{\si_n'}\ar[r]& F_X^*\cC_{n+1}\ar[d]^{\tau_n}\ar[r]&0 \\ \ce_{n}\ar[r]^{\ph_{n}} & \ce_{n}'\ar[r]& \cC_{n}\ar[r]&0 }
\]
commutative.
It then follows that for every $n\,\in\,\NN$, the coherent sheaf $\cC_n$ is locally free. This
and the fact that every $\ce'_n$ is locally free together imply that $\ci_n$ is locally free (and this shows that $\ck_n$ is also locally free). Consequently, the pull-back sequences 
\[0\,\aro\, F_X^*\ck_{n}\,\aro\, F_X^*\ce_n\,\aro\, F_X^*\ce_n'
\]
are \emph{exact}; because of this, each     $\si_{n}\,:\,F_X^*\ce_{n+1}
\,\longrightarrow\,\ce_n$ induces an isomorphism $F_X^*\ck_{n+1}\,\stackrel\sim\longrightarrow\,\ck_n$.
This proves: 

\begin{prp}\label{08.07.2016--3}
The category of $F$-divided modules on $X$ is abelian, and the forgetful functor 
\[
\mathbf{Fdiv}(X)\,\aro\, \mathbf{Coh}(X)\, ,\quad \{\ce_n\}\,\longmapsto\, \ce_0
\]
is faithful and exact. \qed
\end{prp}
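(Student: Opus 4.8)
Proposition \ref{08.07.2016--3} asserts that $\mathbf{Fdiv}(X)$ is abelian and that the forgetful functor $\{\ce_n\} \mapsto \ce_0$ is faithful and exact. The plan is to leverage the discussion immediately preceding the statement, which has already done the essential analytic work, and to organize it into the two distinct assertions.

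For the abelian structure, the strategy is to transport the abelian structure of $\mathbf{Coh}(X)$ level-by-level through the trivializations $\si_n$. First I would verify that $\mathbf{Fdiv}(X)$ is additive: this is the $k$-linearity already recorded after the definition (citing \cite[2.2]{dS07}), and biproducts are formed level-wise, the isomorphisms $\si_n$ being compatible with direct sums since $F_X^*$ is additive. The crux is to produce kernels and cokernels and to check that the canonical map from coimage to image is an isomorphism. Given a morphism $\{\ph_n\}$, the preceding paragraph has already constructed the cokernel object $\{\cC_n,\tau_n\}$ by showing that the level-wise cokernels $\cC_n \,=\, \cC\!oker(\ph_n)$ assemble into an $F$-divided module via the unique $\tau_n$ filling the commutative ladder. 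The same argument, applied to the level-wise kernels $\ck_n$, produces the kernel object $\{\ck_n\}$ --- and this is precisely where the local-freeness input becomes indispensable. Indeed, the reason the naive kernels form an $F$-divided module is \emph{not} formal: one needs the pull-back sequence $0 \to F_X^*\ck_{n+1} \to F_X^*\ce_{n+1} \to F_X^*\ce_{n+1}'$ to remain exact on the left, i.e. one needs $F_X^*$ to preserve the injection $\ck_{n+1} \hookrightarrow \ce_{n+1}$.

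Here is the main obstacle, and it is the conceptual heart of the matter: the Frobenius pull-back $F_X^*$ is \emph{not} left-exact in general, so there is no reason a priori that $F_X^*\ck_{n+1}$ should inject into $F_X^*\ce_{n+1}$. The resolution, already sketched in the excerpt, is a bootstrapping argument resting on Lemma \ref{08.07.2016--4}. One first applies Lemma \ref{08.07.2016--4} to the cokernel module $\{\cC_n,\tau_n\}$ to deduce that each $\cC_n$ is locally free; combined with the local freeness of each $\ce_n'$, this forces the image $\ci_n$ to be locally free (as the kernel of a surjection of bundles onto a bundle), and in turn $\ck_n$ to be locally free (as the kernel of a surjection of bundles $\ce_n \twoheadrightarrow \ci_n$). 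Once $\ck_n$ is a sub-\emph{bundle} of $\ce_n$, the quotient $\ce_n/\ck_n \cong \ci_n$ is locally free, so the short exact sequence $0 \to \ck_n \to \ce_n \to \ci_n \to 0$ is locally split; applying the right-exact functor $F_X^*$ to a locally split sequence keeps it (locally split, hence) exact, which gives precisely the desired left-exactness of the pulled-back sequence. Thus $\si_n$ restricts to an isomorphism $F_X^*\ck_{n+1} \xrightarrow{\sim} \ck_n$, the kernel object exists, and the coimage--image comparison follows since at each level it holds in $\mathbf{Coh}(X)$ and the $\si_n$ identify the two $F$-divided structures.

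For the final clause, that the forgetful functor is faithful and exact, faithfulness is immediate --- a morphism $\{\ph_n\}$ is determined by, indeed contains, the datum $\ph_0$, so $\ph_0 = 0$ forces $\ph_0 = 0$ and the claim $\{\ph_n\} = 0$ reduces to observing that the $\ph_n$ are recovered from $\ph_0$ through the iterated relations $\si_{n}' \circ F_X^*\ph_{n+1} = \ph_n \circ \si_n$ together with injectivity of the (now established) level-wise structure. More cleanly, I would simply note that the zeroth component of the identity is the identity and the functor is additive, so faithfulness holds if and only if no nonzero morphism has vanishing $\ph_0$; and a nonzero $\{\ph_n\}$ has some nonzero $\ph_m$, whence by the compatibility squares and the fact that each $\si_n$ is an \emph{isomorphism}, $\ph_0$ is nonzero. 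Exactness is now a tautology: by the construction above, kernels and cokernels in $\mathbf{Fdiv}(X)$ are computed level-wise, so their zeroth components are exactly the kernels and cokernels of $\ph_0$ in $\mathbf{Coh}(X)$, and the forgetful functor therefore carries exact sequences to exact sequences. I expect the entire proof to be short, since the substantive estimates were discharged in the paragraph preceding the statement; the only point deserving emphasis is the non-formal use of Lemma \ref{08.07.2016--4} to secure local freeness and thereby the exactness of Frobenius pull-back on the relevant short exact sequences.
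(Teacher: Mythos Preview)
Your proposal is correct and follows the paper's approach exactly: the paragraph preceding the proposition \emph{is} the proof, and you have reproduced its logic---cokernels first via right-exactness of $F_X^*$, then Lemma \ref{08.07.2016--4} to obtain local freeness of the $\cC_n$, then bootstrap to local freeness of $\ci_n$ and $\ck_n$, then local splitting to recover left-exactness of the pulled-back kernel sequence.

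One small caution on your faithfulness argument: the ``cleaner'' route via the compatibility squares implicitly assumes that $F_X^*$ does not annihilate a nonzero map between vector bundles, but on a non-reduced $X$ a matrix with nilpotent entries can have all $p$-th powers zero, so $F_X^*\ph_m=0$ need not force $\ph_m=0$. The airtight argument is the one your own setup already provides: since $\{\ci_n\}$ is $F$-divided and each $\ci_n$ is locally free, the rank of $\ci_n$ is independent of $n$; thus $\ph_0=0$ gives $\mathrm{rank}\,\ci_0=0$, hence $\ci_n=0$ and $\ph_n=0$ for all $n$.
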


Now, on $\bb{Fdiv}(X)$ we have a tensor product defined term-by-term and also a unit 
element. In view of Lemma \ref{08.07.2016--4}, we can define the dual of each object 
$\ce$ of $\bb{Fdiv}(X)$ in a simple fashion. This being so, $\bb{Fdiv}(X)$ is an 
abelian rigid tensor category \cite[Definition 1.15]{dm}.

\begin{thm}\label{15.11.2017--1}
For any $k$-rational point $x_0$ of $X$, the functor $\{\ce_n\}\,\longmapsto x_0^*\,
\ce_0$ defines an equivalence between $\bb{Fdiv}(X)$ and the category of representations of a certain affine group scheme $\Pi^{\rm FD}(X,x_0)$. 
\end{thm}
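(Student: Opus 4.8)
The plan is to realize $\bb{Fdiv}(X)$ as a neutral Tannakian category and then invoke the reconstruction theorem \cite[Theorem~2.11]{dm}, which produces the affine group scheme $\Pi^{\rm FD}(X,x_0)$ together with the asserted equivalence. By Proposition \ref{08.07.2016--3} and the remark following it, $\bb{Fdiv}(X)$ is already known to be a $k$-linear abelian rigid tensor category; moreover the condition $\mm{End}(\one)=k$ is automatic once a fibre functor is produced (see item (12) of the terminology). Thus the entire matter reduces to verifying that the functor
\[
\om\,:\,\bb{Fdiv}(X)\,\aro\,\mathbf{Vect}_k,\qquad \{\ce_n\}\,\longmapsto\, x_0^*\ce_0,
\]
is a fibre functor, that is, $k$-linear, $\ot$-compatible, exact and \emph{faithful}. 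I will then set $\Pi^{\rm FD}(X,x_0):=\mm{Aut}^\ot(\om)$.

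The $k$-linearity is immediate from the $k$-linear structure of \cite[2.2]{dS07}, and compatibility with tensor products holds because the tensor product on $\bb{Fdiv}(X)$ is computed termwise while $x_0^*$ is monoidal: one has $x_0^*(\ce_0\ot\cf_0)\cong x_0^*\ce_0\ot_k x_0^*\cf_0$ and $x_0^*\co_X=k$, compatibly with the associativity, commutativity and unit constraints. For exactness, I would use Lemma \ref{08.07.2016--4}: the exact forgetful functor of Proposition \ref{08.07.2016--3} carries a short exact sequence in $\bb{Fdiv}(X)$ to a sequence $0\to\ce_0'\to\ce_0\to\ce_0''\to 0$ of coherent sheaves all of which are locally free. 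Such a sequence is locally split, hence stays exact after applying $x_0^*$; this is precisely the exactness of $\om$.

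The one genuinely nontrivial point — and the step I expect to be the main obstacle — is faithfulness, where the local freeness of Lemma \ref{08.07.2016--4} enters and where the connectedness of $X$ is used (the case of interest, $X$ being a variety or a completion/localization thereof). Let $\{\al_n\}:\{\ce_n\}\to\{\ce_n'\}$ be a morphism with $\om(\{\al_n\})=x_0^*\al_0=0$; I must show $\al_n=0$ for all $n$. As recorded just before Proposition \ref{08.07.2016--3}, the images $\ci_n=\ci\!m(\al_n)$ assemble into an $F$-divided subsheaf of $\{\ce_n'\}$ with all $\ci_n$ locally free, and the isomorphisms $\si_n$ identify $F_X^*\ci_{n+1}\cong\ci_n$, so the ranks of the $\ci_n$ all coincide. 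Since $\ci_0$ is locally free its rank is locally constant on $X$, and connectedness forces it to be constant. Applying $x_0^*$ to the sequence $0\to\ci_0\to\ce_0'\to\cC_0\to 0$ of bundles (again exact after pull-back) identifies $x_0^*\ci_0$ with the image of $x_0^*\al_0=0$; thus $\ci_0$ has rank zero at $x_0$, hence rank zero everywhere, whence $\ci_0=0$. As all $\ci_n$ share this rank, every $\ci_n=0$ and therefore $\{\al_n\}=0$.

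Having checked that $\om$ is a $k$-linear exact faithful tensor functor, the pair $(\bb{Fdiv}(X),\om)$ is a neutral Tannakian category, and \cite[Theorem~2.11]{dm} (equivalently \cite[1.12]{Del90}) yields the affine group scheme $\Pi^{\rm FD}(X,x_0)=\mm{Aut}^\ot(\om)$ through which $\om$ factors as an equivalence $\bb{Fdiv}(X)\xrightarrow{\ \sim\ }\mm{Rep}_k(\Pi^{\rm FD}(X,x_0))$, exactly as claimed.
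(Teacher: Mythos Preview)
Your proof is correct and follows essentially the same route as the paper: establish that $\bb{Fdiv}(X)$ is rigid abelian $\ot$, verify that $\om=x_0^*(-)_0$ is exact (via local freeness, Lemma~\ref{08.07.2016--4}) and faithful, then invoke \cite[Theorem~2.11]{dm}. The only cosmetic difference is in the faithfulness step: the paper observes that $x_0^*\ce_0=0\Rightarrow\ce_0=0$ and then cites the standard fact \cite[Ch.~V, Proposition~1.1]{altman-kleiman70} that an exact additive functor reflecting zero objects is faithful, whereas you unwind this same argument explicitly by tracking the image object $\{\ci_n\}$ --- and in doing so you correctly isolate the use of connectedness, which the paper leaves implicit.
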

\begin{proof}
It follows from Proposition \ref{08.07.2016--3} and Lemma \ref{28.06.2016--1} that the functor $\{\ce_n\}\,\longmapsto x_0^*\,
\ce_0$ is exact, as it is the combination of the forgetful functor of Proposition \ref{08.07.2016--3}
and the fibre functor at $x_0$, which is exact since it is restricted on the full subcategory of vector bundles. Now, since $x_0^*(\ce_0)=0$ implies that $\ce_0=0$ (recall that $X$ is noetherian), a standard argument shows that $x_0^*$ is faithful \cite[Ch. V, Proposition 1.1]{altman-kleiman70}. The final claim of the Theorem follows from Tannakian duality \cite[Theorem 2.11]{dm}. 
\end{proof}

The affine group scheme $\Pi^{\rm FD}(X,x_0)$ in Theorem \ref{15.11.2017--1}
is called \emph{the $F$-divided fundamental group scheme} of $X$ based at $x_0$. But even in the absence of a $k$-point on $X$, we can say something by verifying the infamous condition  (cf. \cite[1.10]{Del90})
\begin{equation}\label{ic}
\mathrm{End}(\mathds1)\,=\,k\, .
\end{equation}
We start by noting that, in our particular case, the ring $\mm{End}(\one)$ is isomorphic to
\[
\co(X)^\S:= \{a\in \co(X)\,:\,\text{for each $n\in\NN$, there exists $a_n$ such that  $a=a_n^{p^n}$ }\}.
\]
Indeed, from 
the nilpotency of the nilradical of $X$  \ega{I}{}{6.1.6, p.142}, we see that for a certain $r\in\NN$,  the nilradical $\g N$ of $\co(X)$ is $\{a\in\co(X)\,:\,a^{p^r}=0\}$. (Even if $\co(X)$ fails to be noetherian.)
If $\co(X)$ is abbreviated  to $\co$, we construct the following morphism between projective systems of abelian groups 
\begin{equation}\label{16.04.2020--1}
\xymatrix{\g F:&F^0(\co) && \ar[ll]_-{\text{inclusion}} F^1(\co) && F^2(\co)
\ar[ll]_-{\text{inclusion}} & \ar[l]\cdots 
\\
\g O\ar[u]^{\phi}:&\co \ar[u]^{F^0} && \ar[ll]^-{F}\ar[u]^{F^1}  \co && \ar[ll]^-{F}\ar[u]^{F^2}\co &\ar[l] \cdots
}
\end{equation}
Using the above observation concerning the nilradical $\g N$,  we conclude  that the projective system 
\[
\xymatrix{\mm{Ker}(\phi):&\mm{Ker}(F^0)&\ar[l]_{F}\mm{Ker} (F^1)& \ar[l]_-{F} \cdots} 
\]
satisfies the Mittag-Leffler condition \cite[II.9]{hartshorne} and has a vanishing projective limit. Hence, $\lip\phi:\lip\g O\to\lip \g F$ is an isomorphism \cite[II.9.1]{hartshorne}, which concludes our argument because $\lip\g F$ is just $\cap_nF^n(\co)$.

\begin{lem}\label{28.06.2016--1}
Let $(A,\,\g m)$ be an integral, local  noetherian $k$-algebra with residue field $k$ and field of fractions $K$. If $X\,=\,\mathrm{Spec}\,A$, then $\co(X)^\S\,=\,k$. If $A$ is normal and we write $U\,=\,\mathrm{Spec}\,K$,  then 
$\co(U)^\S\,=\,\co(X)^\S\,=\,k$. 
\end{lem}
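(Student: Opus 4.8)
The plan is to prove the two assertions in sequence: first establish $\co(X)^\S=k$ (where $\co(X)=A$), and then exploit normality to reduce the computation of $\co(U)^\S=K^\S$ to the first part.

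For the first part, I would begin from the decomposition $A=k\oplus\g m$ furnished by the hypotheses, the composite $k\to A\to A/\g m=k$ being the identity of $k$. Given $a\in A^\S$, write $\ov a\in k$ for its residue and, for each $n$, choose $a_n\in A$ with $a_n^{p^n}=a$. Reducing modulo $\g m$ and using that $k$ is perfect (so that $x\mapsto x^{p^n}$ is injective on $k$), the residue of $a_n$ is forced to be the unique root $\ov a^{1/p^n}\in k$, whence $a_n=\ov a^{1/p^n}+m_n$ with $m_n\in\g m$. The decisive computation is the characteristic-$p$ identity $a=a_n^{p^n}=\ov a+m_n^{p^n}$, which gives $a-\ov a=m_n^{p^n}\in\g m^{p^n}$ for every $n$. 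Therefore $a-\ov a\in\bigcap_n\g m^{p^n}\subseteq\bigcap_n\g m^n$, which vanishes by Krull's intersection theorem since $A$ is noetherian local; hence $a=\ov a\in k$. The reverse inclusion $k\subseteq A^\S$ is immediate from the perfectness of $k$.

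For the second part the point is to show $K^\S\subseteq A$, after which the first part concludes the argument; indeed the same estimate will show that the roots also lie in $A$, so that $K^\S\subseteq A^\S=k$. Here I would invoke the standard description of a normal noetherian domain as the intersection $A=\bigcap_{\mathrm{ht}\,\g p=1}A_{\g p}$ of its localizations at height-one primes, each $A_{\g p}$ being a discrete valuation ring. For $a\in K^\S$ and any such $\g p$ with valuation $v_{\g p}$, the relation $a=a_n^{p^n}$ yields $v_{\g p}(a)=p^n\,v_{\g p}(a_n)$, so $v_{\g p}(a)$ is divisible by $p^n$ for all $n$ and must equal $0$. Hence $v_{\g p}(a)\ge 0$ for every height-one $\g p$, so $a\in A$; and since $v_{\g p}(a_n)=0$ as well, each $a_n\in A$, giving $a\in A^\S=k$.

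The main obstacle is the second part, namely the passage from the fraction field $K$ back inside $A$: the whole argument depends on having valuations at one's disposal, that is, on the normality hypothesis through the facts that height-one localizations of a normal noetherian domain are DVRs and that $A=\bigcap_{\mathrm{ht}\,\g p=1}A_{\g p}$. Once these are granted, the elementary divisibility-by-$p^n$ observation settles it cleanly. The first part, by contrast, is routine as soon as $\g m$-adic separatedness (Krull intersection) is available.
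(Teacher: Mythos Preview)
Your proof is correct and follows the same strategy as the paper: Krull's intersection theorem for the first assertion, and the description of a normal noetherian domain as an intersection of discrete valuation rings (the paper cites \cite[Theorem 11.5]{matsumura}) for the second. Your treatment is considerably more detailed than the paper's---which dispatches the first part in a single sentence---but the underlying ideas are identical.
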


\begin{proof}The equality $\co(X)^\S\,=\,k$ follows immediately from Krull's intersection theorem \cite[8.10]{matsumura}. Now take any $a\in \co(U)^\S$. Let $v\,:\,K^*\,\longrightarrow\,\ZZ$ be a discrete 
valuation. Then $v(a)=0$. In particular, if $a_n^{p^n}=a$, we also conclude that 
$v(a_n)=0$. Combining this with the fact that $A$ is the intersection of a family of
discrete valuation rings in $K$ dominating 
$A$ \cite[Theorem 11.5]{matsumura}, we conclude that $a\,\in\, \cap_n A^{p^n}$, which is $k$.
\end{proof}Let us note in passing the following:

\begin{cor}\label{07.11.2017--1}
Let $A$ and $K$ be as before, and write $f\,:\,\mathrm{Spec}\,K
\,\longrightarrow\,\mathrm{Spec}\,A$ for the obvious morphism. Then
\[
f^\#\,:\,{\rm Hom}_{\bb{Fdiv}(A)}(\one^r,\,\one)\otimes_AK \,\aro\,{\rm Hom}_{\bb{Fdiv}(K)}(\one^r,
\,\one)
\]
is bijective. \qed
\end{cor}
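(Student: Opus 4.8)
The plan is to unwind both sides of the claimed bijection in terms of the $F$-divided description of morphisms from $\one^r$ to $\one$, and then invoke the comparison between $\co(X)^\S$ and $\co(U)^\S$ supplied by Lemma \ref{28.06.2016--1} together with the computation of $\mm{End}(\one)$ preceding it. Concretely, a morphism $\one^r\to\one$ in $\bb{Fdiv}(A)$ is a family $\{\al_n\}$ with $\al_n:A^r\to A$ commuting with the $F$-divided structures; since the structure maps on $\one^r$ and $\one$ are the canonical ones, the compatibility $\al_n\circ\si_n=\si_n'\circ F^*(\al_{n+1})$ forces $\al_n=F^n(\al_0)$ applied coordinatewise, so the whole family is determined by $\al_0$, and the constraint $\al_{n}=F(\al_{n+1})$ says precisely that each of the $r$ entries of $\al_0$ lies in $\cap_nF^n(\co(X))=\co(X)^\S$. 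Thus ${\rm Hom}_{\bb{Fdiv}(A)}(\one^r,\one)\cong(\co(X)^\S)^r$, and similarly ${\rm Hom}_{\bb{Fdiv}(K)}(\one^r,\one)\cong(\co(U)^\S)^r$.

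With these two identifications in hand, the map $f^\#$ is identified with the natural map $(\co(X)^\S)^r\otimes_AK\to(\co(U)^\S)^r$. First I would verify that $f^\#$ is compatible with this dictionary: pulling back along $\mathrm{Spec}\,K\to\mathrm{Spec}\,A$ simply restricts the scalars of each $\al_0$ from $A$ to $K$, so the induced map on $\mm{Hom}$-groups is, entry by entry, the canonical map $\co(X)^\S\otimes_AK\to\co(U)^\S$. The whole statement therefore reduces to the case $r=1$, namely that
\[
\co(X)^\S\otimes_A K\,\aro\,\co(U)^\S
\]
is bijective. Now Lemma \ref{28.06.2016--1} computes, under the standing hypothesis that $A$ is normal, integral, local noetherian with residue field $k$, that both $\co(X)^\S$ and $\co(U)^\S$ equal $k$; the map in question becomes $k\otimes_A K\to k$, and one checks directly that it is the identity on $k$, hence an isomorphism.

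The only point requiring genuine care is that the hypotheses of Corollary \ref{07.11.2017--1} match those of Lemma \ref{28.06.2016--1}: the corollary says ``$A$ and $K$ as before,'' so $A$ must be taken integral, local, noetherian, normal, with residue field $k$, exactly the setting in which the lemma yields $\co(U)^\S=\co(X)^\S=k$. Granting this, the verification of bijectivity is immediate from the lemma and requires no further computation. I therefore expect the main (and essentially only nontrivial) obstacle to be the bookkeeping in the first paragraph: showing that an arrow $\one^r\to\one$ in $\bb{Fdiv}$ is faithfully encoded by a tuple of elements of $\cap_nF^n(\co)$. This rests on the identification $\mm{End}(\one)\cong\co(X)^\S$ established in the discussion surrounding diagram \eqref{16.04.2020--1}, applied here with source $\one^r$ rather than $\one$; once that is recorded, the tensor-product and reduction-to-$r=1$ steps are formal, and the conclusion follows from Lemma \ref{28.06.2016--1}.
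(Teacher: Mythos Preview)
Your proposal is correct and mirrors the paper's intended argument (the corollary carries only a \qed): you correctly identify both Hom-sets with $(\co^\S)^r$ via the discussion around \eqref{16.04.2020--1}, and then Lemma~\ref{28.06.2016--1} gives $\co(X)^\S=\co(U)^\S=k$, making $f^\#$ a bijection. The only imprecision is the phrase ``$k\otimes_AK\to k$ is the identity'': since $k=\co(X)^\S$ is the ground field sitting inside $A$ (not an $A$-module via $A\to A/\g m$, under which $k\otimes_AK$ would vanish), the $\otimes_AK$ in the statement is essentially inert, and what you are really checking is that the composite $k\subset A\hookrightarrow K$ lands in $\co(U)^\S=k$ as the identity.
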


The following proposition generalizes Kindler's result 
\cite[p.~6465, Lemma~2.8]{kin15}.
\begin{prp}\label{24.06.2016--2}Let $X$ be  a normal variety. Then, for each open and
dense subset $U\,\subset\, X$, and each $u_0\,\in\, U(k)$, the homomorphism 
\[
\Pi^{\rm FD}(U,u_0)\,\aro\,\Pi^{\rm FD}(X,u_0)
\]
induced by the inclusion is a quotient morphism. 
\end{prp}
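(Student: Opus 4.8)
The plan is to verify that the homomorphism $\ph:\Pi^{\rm FD}(U,u_0)\to\Pi^{\rm FD}(X,u_0)$ satisfies the hypotheses of Lemma \ref{12.12.2014--2}; here the functor $\ph^\#$ is the restriction functor $j^\#:\bb{Fdiv}(X)\to\bb{Fdiv}(U)$ attached to the open immersion $j:U\hookrightarrow X$. Thus I must establish two things: \textbf{(1)} that $j^\#$ is fully faithful, and \textbf{(2)} that every rank-one $F$-divided subsheaf $L=\{\cl_n\}$ of a restriction $j^\#V=\{\cv_n|_U\}$ (with $V=\{\cv_n\}\in\bb{Fdiv}(X)$) is invariant under $\Pi^{\rm FD}(X,u_0)$, i.e. extends to an $F$-divided subsheaf of $V$ on all of $X$.

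For the full faithfulness I would first reduce, using the rigidity isomorphism \eqref{7.3.2019--2} together with the fact that $j^\#$ is a tensor functor commuting with duals and fixing $\one$, to proving that $\mathrm{Hom}_{\bb{Fdiv}(X)}(\one,\cg)\to\mathrm{Hom}_{\bb{Fdiv}(U)}(\one,j^\#\cg)$ is bijective for every $\cg=\{\cg_n\}\in\bb{Fdiv}(X)$. A morphism $\one\to\cg$ is a compatible family of global sections $s_n\in\Gamma(X,\cg_n)$ with $\si_n(F^*s_{n+1})=s_n$, and each $\cg_n$ is locally free by Lemma \ref{08.07.2016--4}; injectivity is then immediate because $X$ is integral and $U$ is dense. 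For surjectivity, given such a family $\{t_n\}$ over $U$, the crucial point is to extend across the codimension-one points $\eta$ of $X\setminus U$. At the discrete valuation ring $\co_{X,\eta}$, with valuation $v$, I would trivialise $\cg_n$ and set $\mu_n=\min_i v(t_n^{(i)})$; since $F^*$ multiplies valuations by $p$ and each $\si_n$ is an isomorphism over $\co_{X,\eta}$ (hence preserves the minimal valuation), one gets $\mu_n=p\,\mu_{n+1}$, whence $\mu_0=p^n\mu_n$ for all $n$. As $\mu_0$ is a fixed integer divisible by arbitrarily high powers of $p$, it must vanish, so $t_0$, and likewise each $t_n$, is regular at $\eta$. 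Having extended over all codimension-one points, normality of $X$ (Hartogs' phenomenon for the locally free, hence reflexive, $\cg_n$) extends the sections over the remaining locus of codimension $\ge 2$, and the relations $\si_n(F^*s_{n+1})=s_n$ persist by density of $U$.

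For condition \textbf{(2)} I would pass to the generic point $\xi$ of $X$: the inclusions $\cl_n\subset\cv_n|_U$ determine a $\si$-compatible family of lines $\ell_n\subset\cv_{n,\xi}$, and I define $\cm_n\subset\cv_n$ to be the saturation of $\ell_n$, i.e. the reflexive subsheaf of sections of $\cv_n$ whose value at $\xi$ lies in $\ell_n$. By construction $\cm_n|_U=\cl_n$, and once each $\cm_n$ is known to be a sub-bundle the isomorphisms $\si_n$ automatically carry $F^*\cm_{n+1}$ onto $\cm_n$ — two saturated sub-bundles of $\cv_n$ agreeing at $\xi$ must coincide — so that $\{\cm_n,\si_n\}$ is the desired subobject of $V$ in $\bb{Fdiv}(X)$. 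To see that $\cm_n$ is locally free and a sub-bundle at a closed point $x$, I would complete: by Lemma \ref{08.05.2019--1} the $F$-divided module $V\otimes\wh\co_{X,x}$ is trivial, isomorphic to $\one^r$, and under this trivialisation the $\si$-compatibility forces the generic line $\ell_0$ to be spanned by a vector whose coordinate ratios lie in $\co(\mathrm{Spec}\,\wh K)^\S$, where $\wh K=\mathrm{Frac}(\wh\co_{X,x})$; by Lemma \ref{28.06.2016--1} applied to the normal complete local domain $\wh\co_{X,x}$ this ring of $\S$-invariants is just $k$ (this is exactly the computation underlying Corollary \ref{07.11.2017--1}). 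Hence $\ell_0$ is a constant line, its saturation $\cm_{0,\wh x}$ is a free direct summand of $\wh\co_{X,x}^r$, and by faithfully flat descent $\cm_0$, and likewise each $\cm_n$, is a sub-bundle near $x$.

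I expect the genuine obstacle to be this local freeness of the saturations $\cm_n$ at the points of codimension $\ge 2$: a priori a saturation is only reflexive, and on a normal variety reflexive rank-one sheaves need not be invertible. It is precisely here that the triviality of $F$-divided modules over complete local rings (Lemma \ref{08.05.2019--1}) does the essential work, by showing that locally the ambient object is constant and that every $F$-divided line inside it is constant as well. The codimension-one valuation computation used for \textbf{(1)}, although elementary, is the other indispensable appearance of the $F$-divided structure: without the infinite supply of Frobenius descents a section could acquire a pole along $X\setminus U$ and the restriction functor would fail to be full.
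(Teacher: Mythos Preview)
Your argument is correct, and it is organised rather differently from the paper's proof.

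The paper does not treat an arbitrary dense open $U$ directly. It first reduces to the case of a \emph{big} open subset by comparing the four group schemes $\Pi^{\rm FD}(U^{\rm reg})$, $\Pi^{\rm FD}(U)$, $\Pi^{\rm FD}(X^{\rm reg})$, $\Pi^{\rm FD}(X)$ and invoking Kindler's lemma (which uses differential operators on the regular locus) for the map $\Pi^{\rm FD}(U^{\rm reg})\to\Pi^{\rm FD}(X^{\rm reg})$. Once $U$ is big, full faithfulness is immediate from the SGA2 fact that $\bb{VB}(X)\to\bb{VB}(U)$ is fully faithful, and the paper verifies condition~(2bis) rather than~(2): given a rank-one \emph{quotient} $\ce|_U\twoheadrightarrow\cl$ it produces an extension by interpreting $\ph_0$ as a section of $\PP(\ce_0)$ over $U$, completing at each $x\in X\setminus U$, trivialising $\ce$ over $\wh\co_{X,x}$ via Lemma~\ref{08.05.2019--1}, and then using Lemma~\ref{08.07.2016--2} to spread the formal section to an honest Zariski neighbourhood (Lemma~\ref{28.06.2016--3}).

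Your route avoids both the reduction step and the projective-bundle lemma. For~(1) you replace SGA2 by an elementary valuation argument at height-one primes of $X\setminus U$ (the relation $\mu_0=p^n\mu_n$ forcing $\mu_0=0$), followed by Hartogs; this handles the general $U$ in one stroke. For~(2) you work with rank-one \emph{sub}objects and their saturations, checking sub-bundle-ness by the same complete-local triviality Lemma~\ref{08.05.2019--1} together with Lemma~\ref{28.06.2016--1} for $\wh K^\S=k$ (your use of excellence to ensure $\wh\co_{X,x}$ is a normal domain is the right justification). What your approach buys is self-containment: no appeal to Kindler's differential-operator argument, and no detour through $\PP(\ce_0)$. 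What the paper's approach buys is a clean separation of the codimension-one obstruction (handled once and for all by Kindler) from the purely codimension-$\ge2$ extension problem, and a more geometric picture of the rank-one extension via sections of a projective bundle.
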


\begin{proof}We first \emph{assume} that the proposition is true for big open subsets and deduce the general case from it.

Let $U$ be an open subset of $X$. 
Let $X^{\rm reg}$ (respectively, $U^{\rm reg}$) be the open subset of regular points
of $X$ (respectively, $U$). We note that $X$ is normal because $Y$ is so. So
$X^{\rm reg}$ and $U^{\rm reg}$ are big open subsets of $X$ and $U$ respectively.
Consider the commutative diagram of affine group schemes
\[
\xymatrix{\Pi^{\rm FD}(X^{\rm reg})\ar[r]^\al&\Pi^{\rm FD}(X) \\ \Pi^{\rm FD}(U^{\rm reg})\ar[r]_\be\ar[u]^\xi& \Pi^{\rm FD}(U)\ar[u]_\ze.}
\]
Using differential operators, one comfortably shows that $\xi$ is faithfully flat
\cite[p.~6465, Lemma~2.8]{kin15}. Since $X^{\rm reg}\,\subset\, X$ and
$U^{\rm reg}\, \subset\, U$ are big open subsets, the assumption implies that
$\al$ and $\be$ are faithfully flat. Consequently, $\ze$ is also faithfully flat as
a simple application of \cite[Chapter~14]{waterhouse} demonstrates. 

We now assume that $U\subset X$ is a \emph{big} open subset and set out to verify
that the conditions (1) and (2bis) appearing in Lemma \ref{12.12.2014--2} hold. As
the restriction $\mathbf{VB}(X)\,\aro\,\mathbf{VB}(U)$
is fully faithful \cite[III, \S3]{SGA2}, so is the restriction $\bb{Fdiv}(X)\,\aro\,\bb{Fdiv}(U)$; condition (1) of Lemma \ref{12.12.2014--2} is then readily verified.

To verify the condition (2bis), we need to show that for $\ce\,\in\,\bb{Fdiv}(X)$ and each
quotient morphism 
\[\ph\,:\,\ce|_U\,\aro\,\cl\ ,\]
where $\cl$ is an object of rank one in $\bb{Fdiv}(U)$, there exists some $\wt\cl\,\in\,
\bb{Fdiv}(X)$ that extends $\cl$ and furthermore there is a morphism
$\wt\ph\,:\,\ce\,\longrightarrow\,\wt\cl$ of $\bb{Fdiv}(X)$ extending $\ph$. 
Two lemmas will be proved for that purpose. 

\begin{lem}\label{28.06.2016--3}
Let $X$, $U$, $\ce$, $\cl$ and $\ph$ be as before. Then, there exists a
line bundle $\wt\cl_0$ extending $\cl_0$, and furthermore there is a quotient morphism
\[\wt\ph_0:\ce_0\,\aro\, \wt\cl_0\] extending $\ph_0$. 
\end{lem}

\begin{proof}
Let $\PP(\ce_0)\,\longrightarrow\, X$ be the projective bundle associated to $\ce_0$. The
existence of $\cl_0$ implies that there is a section 
\[\xymatrix{
 & \PP(\ce_0)\ar[d] \\U\ar[ru]^-{\si}\ar[r] &X. 
}
\]
Let $x\,\in\, X\setminus U$ and write $D$ (respectively, $D^\circ$) for the spectrum of the
complete local ring $\wh\co_x$ (respectively, field of fractions of $\wh\co_x$). It should be
emphasized that
$\wh\co_x$ is a \emph{normal domain}; see \cite[Theorem 79, p.258]{matsumura80}.
We then arrive at a commutative diagram 
\[
\xymatrix{ D^\circ\ar[r]\ar[d] &\ar[d] U\ar[r]^\si&\PP(\ce_0)\\ D\ar[r]& X.& }
\]
Since $\ce|_D$ is isomorphic to $\one_D^{\oplus r}$, it follows that $\ce|_{D^o}$ is
isomorphic to $\one_{D^\circ}^{\op r}$; hence $\cl|_{D^\circ}$ is a quotient of
$\one_{D^\circ}^{\op r}$, which implies that $\mathcal L|_{D^\circ}$ is isomorphic
to $\one_{D^o}$ (see Corollary \ref{07.11.2017--1}). Consequently, we obtain the
dotted arrow in the following commutative diagram: 
\[
\xymatrix{ D^\circ\ar[r]\ar[d] &\ar[d] U\ar[r]^\si&\PP(\ce_0)\\ \ar @{-->}@/_2.5pc/[rru]D\ar[r]& X.& }
\]
In view of Lemma \ref{08.07.2016--2} below, there is some open neighborhood $x\,\in
\,V\,\subset\, X$ and a morphism $V\,\longrightarrow\,\PP(\ce_0)$ which extends
$D\,\longrightarrow\, \PP(\ce_0)$. Therefore, it is possible to find an extension of the
morphism $$\ph\,:\,\ce_0\vert_U\,\longrightarrow\,\cl_0$$ to $V$. As $x$ is arbitrary, we
arrive at a desired conclusion. 
\end{proof}

\begin{lem}\label{08.07.2016--2}
Let $(A,\,\g m)$ be a noetherian local ring whose completion $\wh A$ is a domain (note
that in this case $A$ is also a domain). If $K$ (respectively, $\wh K$), stands
for the fractions field of $A$ (respectively, $\wh A$), and if $h\,\in\, K$ is such that its image in $\wh K$ belongs to $\wh A$, then $h\in A$.

In particular, if $R$ is any ring and $u:R\,\longrightarrow\, K$ is a morphism whose image belongs to $\wh A$, then $u(R)\subset A$.
\end{lem}

\begin{proof}
We write $h\,=\,h_1\po h_2^{-1}$, where $h_1\,\in\, A$ and $h_2\,\in\,\g m$ (otherwise $h
\,\in\, A^\times$ and there is nothing to be done). The fact that $h\,\in\, \wh A$
means simply that $\wh A h_2\,\supset\,\wh A h_1$. Since $A\,\longrightarrow\, \wh A$ is
faithfully flat \cite[p.~62, 8.14]{matsumura}, we conclude that
$\wh{A} a\cap A \,=\,A a$ for each $a\,\in\ A$ \cite[p.~49, 7.5(ii)]{matsumura},
an hence $A h_2\,\supset\, A h_1$.
\end{proof}

\emph{Continuing with the proof of Proposition \ref{24.06.2016--2}}, we employ Lemma 
\ref{28.06.2016--3} to find epimorphisms
\[
\wt\ph_n\,:\,\ce_n\,\aro\,\wt\cl_n
\]
extending $\ph_n\,:\,\ce_n|_U\,\longrightarrow\,\cl_n$. From the fact that 
\[
\bb{VB}(X)\,\aro\,\bb{VB}(U)
\]
is fully faithful [SGA2, III.3], the isomorphisms 
\[
\tau_n\,:\,F_U^*(\cl_{n+1})\,\arou\sim\,\cl_n
\]
used in the definition of $\cl\,\in\,\bb{Fdiv}(U)$ can be extended to isomorphisms
$\wt\tau_n\,:\,F_X^*\wt\cl_{n+1}\,\longrightarrow\, \wt\cl_n$. Another application of
the fully faithfulness of $\bb{VB}(X)\,\longrightarrow\,\bb{VB}(U)$ shows
that the arrows $\wt\ph_n$ give rise to an epimorphisms of $\bb{Fdiv}(X)$.
\end{proof}
 
\begin{rmk}
The results in this section follow from the work of Tonini and Zhang: Compare Theorem 
\ref{15.11.2017--1} to \cite[Theorem I]{TZ0} and
Lemma \ref{28.06.2016--1} to 
 \cite[Proposition 6.19]{TZ0}.  
 A more general result than Proposition \ref{24.06.2016--2} appears as Theorem 3.1 in \cite{TZ1}.
It is also useful to note that  \cite[Section 3]{ES16} also presents a swift development of the basis of the theory of $F$-divided sheaves on schemes of finite type over a field. 
 
\end{rmk}

\section{The $F$-divided fundamental group scheme of a quotient: the case of a free action}\label{s29.05.2018--1}

The main ideas leading to the findings of this section are folklores and many of its forms have already been published (see \cite[Proposition 1.4.4]{Katz87} or  \cite[Theorem 2.9]{EHS08} for example).

Let $f\,:\,Y\,\longrightarrow\, X$ be a finite and etale morphism of $k$-varieties. We wish to compare $\Pi^{\rm FD}(Y)$ 
and $\Pi^{\rm FD}(X)$, and the method relies on constructing a \emph{right} adjoint 
$f_\#\,:\,\bb{Fdiv}(Y)\,\longrightarrow\,\bb{Fdiv}(X)$ to the functor $f^\#\,:\,\bb{Fdiv}(X)\,\longrightarrow\,\bb{Fdiv}(Y)$. As expected, 
$f_\#$ is built up from the usual adjoint $f_*$. Let us prepare the terrain.

Given a $k$-algebra, we write $A'$ to denote the $k$-algebra whose underlying commutative ring is $A$, but whose multiplication by an element $\la\in k$ is given through the formula $\la\bullet a=\la^{p^{-1}}a$. 
With this definition, the Frobenius morphism $F_A\,:\,A'\,\longrightarrow\, A$ is $k$-linear. 

\begin{lem}\label{16.02.2018--2}Let $A$ be a $k$-algebra and $A\,\longrightarrow\, B$ a finite and etale morphism.The following claims are true. 

\begin{enumerate}\item The natural morphism of $A$-algebras 
\[
\te:B'\otu{A',F_A}A\aro B,\qquad b'\ot a\longmapsto b'^pa
\] 
is bijective. (The reader is required to write down the associated Cartesian diagram of affine schemes.)
\item Let $N'$ be a finitely presented $B'$-module. Then the natural morphism 
\[
\te_{N'} : N'\otu{A',F_A} A\aro N'\otu{B' ,F_{B}}B,\qquad n'\ot a\longmapsto n'\ot a. 
\]
is an isomorphism of $A$-modules.
\end{enumerate}
\end{lem}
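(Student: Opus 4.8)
The plan is to read (1) as the statement that the relative Frobenius of an étale morphism is an isomorphism, and then to deduce (2) formally from (1) by associativity of the tensor product; essentially no new idea is needed for (2).

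For (1), first I would draw the Cartesian square the statement alludes to: $\mm{Spec}(B'\otu{A',F_A}A)$ is the base change of $\mm{Spec}\,B\to\mm{Spec}\,A$ along the absolute Frobenius $\mm{Spec}\,A\to\mm{Spec}\,A$, and under this identification $\te$ becomes the relative Frobenius $F_{B/A}$ (the priming by $A'$, $B'$ serves only to keep all maps $k$-linear and does not affect the underlying scheme-theoretic picture). I would then use two standard facts: the formation of the relative Frobenius commutes with base change on $\mm{Spec}\,A$, and whether a map of finitely presented $A$-modules is an isomorphism may be tested after a faithfully flat base change $A\to A_1$. Since $A\to B$ is finite étale, a finite faithfully flat $A_1$ exists with $B\ot_AA_1$ a finite product of copies of $A_1$; over $A_1$ the map $\te$ becomes the relative Frobenius of a trivial cover, which on each factor is the relative Frobenius of the identity map and hence is the identity. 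Faithfully flat descent then gives that $\te$ itself is an isomorphism.

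The obstacle in (1) is organisational rather than deep: one must check that the $k$-linear twist does not disturb the identification with the relative Frobenius, that the latter really does commute with the chosen base change, and that both $B'\otu{A',F_A}A$ and $B$ are finitely presented (indeed finite locally free of equal rank) $A$-modules so that descent of the isomorphism property applies --- all of which follow from $A\to B$ being finite, flat and finitely presented. One could instead try to show $\te$ is a surjection of finite locally free modules of equal rank, but surjectivity is equivalent to $B=A[B^p]$, and feeding in unramifiedness to prove this appears no simpler than descent, so I would keep descent as the main route.

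For (2), I would deduce everything from (1) with no further hypotheses. Writing $N'=N'\ot_{B'}B'$ and using associativity gives
\[
N'\otu{A',F_A}A\;=\;N'\ot_{B'}\left(B'\otu{A',F_A}A\right).
\]
The inner factor carries a $B'$-module structure via its left tensorand, and a direct check shows that $\te$ intertwines this with the $B'$-module $B$ taken along $F_B$ (i.e. $b_0'\cdot(b'\ot a)$ is sent to $F_B(b_0')\cdot\te(b'\ot a)$). Applying $N'\ot_{B'}(-)$ to the isomorphism $\te$ of (1) therefore produces an isomorphism $N'\ot_{B'}(B'\otu{A',F_A}A)\,\arou{\sim}\,N'\otu{B',F_B}B$, and tracing an elementary tensor $n'\ot a$ through the identifications shows the composite is exactly $\te_{N'}$; hence $\te_{N'}=\id_{N'}\ot\,\te$ is an isomorphism. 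The finite-presentation hypothesis is thus not really needed, though if one wished to use it one could instead reduce to $N'=B'^{\,m}$ via a finite presentation and the right-exactness of the two functors, where the claim is immediate from (1).
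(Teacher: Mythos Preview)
Your proof is correct. For part (2) you give essentially the same argument as the paper: write $N'\otu{B',F_B}B \simeq N'\ot_{B'}(B'\otu{A',F_A}A)\simeq N'\otu{A',F_A}A$ via associativity and part (1); your observation that finite presentation is not actually used is accurate.

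For part (1) your route differs from the paper's. You trivialize the \'etale cover after a faithfully flat base change $A\to A_1$ (so that $B\ot_AA_1\simeq A_1^n$), observe that the relative Frobenius of a trivial cover is the identity, and descend. The paper instead checks fibre-by-fibre: for every $x:A\to K$ with $K$ a field, it identifies $\te\ot_AK$ with the relative Frobenius of the \'etale $K'$-algebra $B'\ot_{A'}K'$, which is an isomorphism by \cite[V.6.7]{bourbaki_algebre}, and then concludes by Nakayama's Lemma. Both are standard proofs that the relative Frobenius of an \'etale morphism is an isomorphism; the paper's has the mild advantage of avoiding the existence of a splitting cover (which, while routine, is an extra input), while yours makes the compatibility with base change the organizing principle and is perhaps conceptually cleaner once that input is granted.
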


\begin{proof}(1) Let $x\,:\,A\,\longrightarrow\, K$ be a morphism of $k$-algebras, where $K$ is a field, and consider
\[
\te\ot_AK:\left(B'\ot_{A',F_A}A\right)\ot_AK\aro B\ot_AK. 
\]
Under the identification
\[(B'\ot_{A',F_A}A)\ot_AK \arou\sim \left(B'\ot_{A'}K'\right)\ot_{K',F_K}K,\]
it is clear that $\te\ot_AK$ corresponds to 
\[
\left(B'\ot_{A'}K'\right)\ot_{K',F_K}K\aro B\ot_AK,\qquad (b'\ot 1)\ot r\longmapsto b'^p\ot r.
\]
Now $B'\ot_{A'}K'$ is an etale $K'$-algebra and hence the arrow above is an isomorphism of $K$-algebras due to \cite[V.6.7, Corollary on p. V.35]{bourbaki_algebre}. We conclude that $\te$ is an isomorphism by means of Nakayama's Lemma. 

(2) This is an application of part (1) and a canonical isomorphism: 
\[
\begin{split}N'\ot_{B',F_B} B & \simeq N'\ot_{B'}\left(B'\ot_{A',F_A}A\right)
\\
&\simeq N'\ot_{A',F_A}A.
\end{split}
\] 
\end{proof}

\begin{cor}Let $f\,:\,Y\,\longrightarrow\, X$ be as above. Then, for any coherent $\co_Y$-module $\cn$, the natural base-change arrow
\[
\te_\cn: F_X^*f_*(\cn)\aro f_*(F_Y^*(\cn))
\]
is an isomorphism of $\co_X$-modules. \qed
\end{cor}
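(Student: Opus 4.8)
The plan is to reduce the global statement to the purely algebraic assertion of Lemma \ref{16.02.2018--2}(2), exploiting that $f$, being finite, is affine, and that the formation of the base-change arrow $\te_\cn$ is local on $X$. Since being an isomorphism of $\co_X$-modules may be checked on an affine open cover, it suffices to treat the case $X=\operatorname{Spec}A$ and $Y=\operatorname{Spec}B$ with $A\to B$ finite and etale; because $f$ is affine, the preimage of an affine open is again affine, so such a cover is always available. A coherent $\co_Y$-module then corresponds to a finitely generated $B$-module $N$, which---as $A$, and hence $B$, is noetherian---is in fact finitely presented, so the finiteness hypothesis of the lemma is satisfied.

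In this local situation I would spell out both sides of $\te_\cn$ in terms of the twisted algebras $A'$ and $B'$ introduced before the lemma. Restriction of scalars along $A\to B$ turns $f_*\cn$ into $N$ regarded as an $A$-module; the absolute Frobenius pullback $F_X^*$ then sends it to $N\otu{A',F_A}A$, whereas $F_Y^*\cn$ is $N\otu{B',F_B}B$ and $f_*F_Y^*\cn$ is the same module viewed over $A$. Under these identifications the canonical base-change morphism $\te_\cn$ becomes exactly the map
\[
\te_{N}\,:\,N\otu{A',F_A}A\aro N\otu{B',F_B}B,\qquad n\ot a\longmapsto n\ot a,
\]
of Lemma \ref{16.02.2018--2}(2), applied with $N'=N$ regarded as a $B'$-module (recall that $B'=B$ as rings). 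That lemma asserts precisely that $\te_{N}$ is an isomorphism, and gluing over the cover completes the argument.

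The one step requiring genuine care---and the point I would treat most explicitly---is the verification that the abstract base-change arrow attached to the commutative square formed by $f$ and the two absolute Frobenius morphisms coincides, locally, with the concrete map $\te_N$. This is essentially bookkeeping with the $k$-linear passage from $A$ to $A'$ that renders $F_A$ linear: one must track how the canonical identification $F_X^*f_*\cong(\,\cdot\,)\otu{A',F_A}A$ interacts with $f_*F_Y^*\cn\cong N\otu{B',F_B}B$ and confirm that the resulting comparison is the evident assignment $n\ot a\mapsto n\ot a$. Once this compatibility is pinned down, everything else is formal; indeed part (1) of the lemma may be read as saying that the Frobenius square is Cartesian, which is the structural reason the base-change map is an isomorphism in the first place.
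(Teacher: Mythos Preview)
Your proposal is correct and is precisely the elaboration that the paper leaves implicit: the paper marks the corollary with a bare \qed, intending it as an immediate consequence of Lemma~\ref{16.02.2018--2}(2) after localizing to the affine case, and that is exactly what you carry out. Your care in matching the abstract base-change arrow with the explicit map $\te_N$, and your remark that part~(1) of the lemma expresses the Cartesianness of the Frobenius square, are the right points to make explicit.
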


Now let $\{\cn_n,\si_n\}\in\bb{Fdiv}(Y)$. Then 
\[\si_n^f:=f_*(\si_n)\circ \te_{\cn_{n+1}}:F_X^*(f_*(\cn_{n+1}))\arou\sim f_*(\cn_n)\]
defines an object $\{f_*(\cn_n),\si_n^f\}\in\bb{Fdiv}(X)$, and in this way we arrive at an adjunction 
\[
(f^\#,f_\#): \bb{Fdiv}(X)\aro\bb{Fdiv}(Y).
\]
As $\bb{Fdiv}(X)\,\longrightarrow\,\bb{Coh}(X)$ reflects isomorphisms, the construction shows that  $f_\#$ is an exact and faithful functor and the counit 
\[
f^\#f_\#(\ce_n,\si_n)\aro(\ce_n,\si_n)
\]
is always an epimorphism (the third property is in fact a consequence of the second \cite[IV.3, Theorem 1, p.90]{maclane}). 

In addition, $f^\#(f_\#(\one))$
is the \emph{trivial} object $\one \ot_k \co(G)$ of $\bb{Fdiv}(Y)$. 

\begin{prp}\label{16.02.2018--1}
Let $f\,:\,Y\,\longrightarrow\, X$ be a finite-Galois etale covering with group $G$. 
We have an exact sequence of group schemes 
\[1\,\aro\,\Pi^{\rm FD}(Y,y_0)\,\aro\,\Pi^{\rm FD}(X,x_0)\,\aro\, G\,\aro\,1\, .
\]
\end{prp}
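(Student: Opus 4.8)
The plan is to deduce the exact sequence
\[
1\,\longrightarrow\,\Pi^{\rm FD}(Y,y_0)\,\longrightarrow\,\Pi^{\rm FD}(X,x_0)\,\longrightarrow\, G\,\longrightarrow\,1
\]
by feeding the adjunction $(f^\#,f_\#)$ constructed just before the statement into the Tannakian exactness criterion of Proposition \ref{tannakian_exactness}. The morphism I want to analyze is $f_\natural:\Pi^{\rm FD}(Y,y_0)\to\Pi^{\rm FD}(X,x_0)$, which corresponds Tannakianly to the restriction functor $f^\#:\bb{Fdiv}(X)\to\bb{Fdiv}(Y)$; the adjoint $f_\#$ I have just verified to be \emph{exact and faithful} and to satisfy that $f^\#f_\#(\one)=\one\ot_k\co(G)$ is trivial. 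These are precisely the three hypotheses of Proposition \ref{tannakian_exactness}. So the first step is simply to invoke that proposition: it immediately gives that $f_\natural$ is a closed and normal immersion, so that $\Pi^{\rm FD}(Y,y_0)$ is a \emph{normal} closed subgroup scheme of $\Pi^{\rm FD}(X,x_0)$, and it identifies $\mm{Rep}_k(Q)$ for the cokernel $Q$ with the full subcategory of $\bb{Fdiv}(X)$ on which $\Pi^{\rm FD}(Y,y_0)$ acts trivially.

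Having established the closed and normal immersion and the existence of a cokernel $Q$ fitting into an exact sequence $1\to\Pi^{\rm FD}(Y,y_0)\to\Pi^{\rm FD}(X,x_0)\to Q\to 1$, what remains is to \emph{identify} $Q$ with the finite group $G$. This is the substance of the proof. The strategy is to describe $\mm{Rep}_k(Q)$ concretely via the characterization supplied by Proposition \ref{tannakian_exactness}: it is the subcategory of $F$-divided sheaves $\ce$ on $X$ whose restriction $f^\#\ce$ to $Y$ is a trivial object of $\bb{Fdiv}(Y)$. I must show this subcategory is equivalent, as a Tannakian category, to $\mm{Rep}_k(G)=$ the category of finite-dimensional $k$-linear $G$-representations.

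The mechanism for the identification is descent along the Galois covering $f$ together with the triviality result for $F$-divided modules. Concretely: an object $\ce\in\bb{Fdiv}(X)$ with $f^\#\ce\cong\one^{\op r}$ in $\bb{Fdiv}(Y)$ amounts, by Galois descent for the $G$-covering $Y\to X$, to a trivial $F$-divided sheaf on $Y$ equipped with a $G$-descent datum, i.e.\ a $G$-action on the fibre; since $\mm{Aut}_{\bb{Fdiv}(Y)}(\one^{\op r})=\mm{GL}_r(k)$ (because endomorphisms of the trivial object are just $k$-matrices — here I use that $\co(Y)^\S=k$, which follows from Lemma \ref{28.06.2016--1} applied to the normal variety $Y$), the descent datum is exactly a homomorphism $G\to\mm{GL}_r(k)$, that is, an $r$-dimensional representation of $G$. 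One then checks that this assignment $\ce\mapsto(\text{the }G\text{-representation on its fibre at }y_0)$ is an equivalence of tensor categories $\mm{Rep}_k(Q)\xrightarrow{\sim}\mm{Rep}_k(G)$, compatibly with fibre functors, whence $Q\cong G$.

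\textbf{The main obstacle} I anticipate is the middle step, namely making the descent identification clean. Two points need care. First, to know that $f^\#\ce$ trivial forces $\ce$ to be built by descent from a \emph{trivial} object — and that the relevant automorphism group is $\mm{GL}_r(k)$ rather than something larger — I must pin down $\mm{End}_{\bb{Fdiv}(Y)}(\one)=k$, i.e.\ $\co(Y)^\S=k$; this is where the normality of $Y$ and Lemma \ref{28.06.2016--1} enter, and one should verify that the ``constants'' computation survives for the whole variety $Y$ (not just a local ring), for which one may reduce to the function field using that $Y$ is integral and normal. Second, I must confirm that Galois descent is available in $\bb{Fdiv}(Y)$ for the étale $G$-cover — i.e.\ that faithfully flat (here, finite étale) descent for the underlying coherent sheaves is compatible with the Frobenius-divided structures $\si_n$. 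This compatibility is routine because each $\si_n$ is a morphism of coherent sheaves and descent is functorial, but it is exactly the kind of bookkeeping that must be spelled out to make the equivalence $\mm{Rep}_k(Q)\simeq\mm{Rep}_k(G)$ rigorous; everything else follows formally from Proposition \ref{tannakian_exactness} and the adjunction already in hand.
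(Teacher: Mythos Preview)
Your proposal is correct and follows the paper for the first half: both apply Proposition~\ref{tannakian_exactness} to the adjunction $(f^\#,f_\#)$, using faithfulness of $f_\#$ and triviality of $f^\#f_\#(\one)$, to conclude that $f_\natural$ is a closed normal immersion and that $\mm{Rep}_k(Q)$ is the full subcategory $\bb{Fdiv}(Y/X)\subset\bb{Fdiv}(X)$ of objects becoming trivial on $Y$.

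For the identification $Q\simeq G$ you take a somewhat different route from the paper. You invoke Galois descent directly: $\bb{Fdiv}(X)\simeq\{\text{$G$-equivariant objects of }\bb{Fdiv}(Y)\}$, so $\bb{Fdiv}(Y/X)$ corresponds to $G$-equivariant structures on trivial objects $\one^{\op r}$, and since $\mm{Aut}_{\bb{Fdiv}(Y)}(\one^{\op r})=\mm{GL}_r(k)$ (from $\mm{End}(\one)=k$, which follows already from the existence of the fibre functor $y_0^*$, so your worry about Lemma~\ref{28.06.2016--1} for global $Y$ is unnecessary), this is $\mm{Rep}_k(G)$. The paper instead builds the associated-bundle functor $\cl_Y:\mm{Rep}_k(G)\to\bb{Fdiv}(Y/X)$, $V\mapsto\{(\cb_n\ot_kV)^G\}$ with $\cb_n=f_*\co_Y$, obtains $\la:Q\to G$, and then checks $\la$ is an isomorphism via the Tannakian criteria of \cite{dm} and Lemma~\ref{12.12.2014--2}: fullness of $\cl_Y$ (using $\cap_n\cb_n(U)=k$ from Krull) gives faithful flatness, and the monomorphism $\cv\hookrightarrow f_\#f^\#(\cv)\simeq f_\#(\one)^r=\cl_Y(\co(G))^r$ shows every object of $\bb{Fdiv}(Y/X)$ is a subobject of some $\cl_Y(V)$, giving the closed immersion. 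Your descent argument is cleaner and more conceptual; the paper's is more self-contained, avoiding descent as a black box and making explicit exactly where the ``constants'' computation $\cap_n F^n(\co)=k$ enters. The two are of course the same equivalence viewed from opposite ends: $\cl_Y$ is precisely the inverse to your descent functor.
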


\begin{proof}  As we observed above, $f_\#$ is 
faithful and $f^\#f_\#(\one)$ is trivial (as an object of $\bb{Fdiv}(Y)$) so that Proposition \ref{tannakian_exactness} 
can be applied to show that $f_\natural\,:\,\Pi^{\rm FD}(Y,y_0)\,\longrightarrow\,\Pi^{\rm FD}(X,x_0)$ is a closed and 
normal immersion.
   In addition,  $Q:=\mm{Coker}(f_\natural)$ is given by the category 
$$\bb{Fdiv}(Y/X):=\{\cv\in\bb{Fdiv}(X)\,:\,\text{$f^\#\cv\in\bb{Fdiv}(Y)$ is trivial}\}.$$

Let us now show that $Q\simeq G$. Given $n$, write $\cb_n=f_*(\co_Y)$ and $\ph_n:\cb_{n+1}\aro\cb_n$ for the Frobenius morphisms so that we have isomorphisms 
\[
\ph_n':\co_X\otu{F_X,\co_X}\cb_{n+1}\stackrel\sim\aro\cb_n
\] 
defining $f_\#(\one)$. Clearly, $G$ acts on each $\cb_n$ and every $\ph'_n$ is $G$-equivariant. Let now $V\in\rep kG$. We then obtain an object $\cl_{Y}(V)\in\bb{Fdiv}(X)$   by putting $\cl_{Y}(V)_n=\left(\cb_n\ot_kV\right)^G$ and defining 
\[
\co_X\otu{F_X,\co_X}\left(\cb_{n+1}\ot_kV\right)^G\arou\sim(\cb_n\ot_kV)^G
\] 
by means of $\ph_n$. (That this arrow is an isomorphism is seen by base-changing via $\co_X\to\cb_n$.) Needless to say, the $\co_X$-module $\cl_Y(V)_n$ is just the associated sheaf $Y\ti^GV$ \cite[Part I, 5.8]{jantzen}. Such a construction gives us a $k$-linear, faithful, exact and tensor functor
\[
\cl_Y:\rep kG\aro \bb{Fdiv}(X).
\]
In addition, $\cl_Y$ takes values in $\bb{Fdiv}(Y/X)$. We therefore obtain a morphism of group schemes 
\[\la:
Q\aro G. 
\]

Using that for each affine open subset $U\subset X$ the intersection $\cap_n\cb_n(U)$ is just $k$---one works on the local rings of closed points and applies Krull's intersection theorem---we conclude that $\cl_Y$ is full; the final assertion in the statement of Lemma \ref{12.12.2014--2} then assures   that $\la$ is faithfully flat. 

For each $\cv\in\bb{Fdiv}(Y/X)$, the unit of the adjunction $(f^\#,f_\#)$, \[\cv\aro f_\#f^\#(\cv),  \]
is clearly a monomorphism because $f$ is faithfully flat. Now $f_\#f^\#(\cv)\simeq f_\#(\one)^r$ and $f_\#(\one)$ is naturally an element of the essential image of $\cl_Y$ since it is the image of the left-regular representation. This demonstrates that every object of $\bb{Fdiv}(Y/X)$ is a subobject of some  $\cl_Y(V)$ and hence that 
$\la$ is a closed immersion \cite[Proposition 2.21, p.139]{dm} which concludes the verification that $Q\simeq G$.
\end{proof}

\section{The $F$-divided group scheme of a quotient}\label{F-divided_quotient}

Our aim in this section is to prove the following:

\begin{thm}\label{13.09.2016--3}
Let $G$ be a finite group acting on the normal variety $Y$ with
$$
f\,:\,Y\,\longrightarrow\, X 
$$
being the quotient \cite[Theorem, \S~7]{mav}. Choose $y_0\,\in\, Y(k)$ above
$x_0\,\in\, X(k)$. Then,  the \textbf{image}   of the  induced homomorphism
\[
 f_{\natural}\,:\,\Pi^{\rm FD}(Y,y_0)\,\aro\,\Pi^{\rm FD}(X,x_0)
\]
is a closed and normal subgroup scheme, and its quotient is
is identified with $G/I$, where $I$ is the subgroup (necessarily normal) generated by all
elements of $G$ having at least one fixed point.
\end{thm}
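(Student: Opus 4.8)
The strategy is to reduce the general (possibly ramified) action to the combination of two extreme cases already handled: the \emph{free} case, treated in Proposition \ref{16.02.2018--1}, and the \emph{totally ramified} case, where the subgroup $I$ coincides with the whole group. The bridge between them is the factorization coming from the inertia subgroup, exactly as in Lemma \ref{03.11.2017--1}(2). Concretely, I would let $I\triangleleft G$ be the subgroup generated by all elements of $G$ with a fixed point, form the intermediate quotient $M:=I\backslash Y$, and write the quotient $f:Y\to X$ as the composite
\[
Y\xrightarrow{\chi} M\xrightarrow{u} X,
\]
where, by Lemma \ref{03.11.2017--1}(2), the map $\chi$ is the quotient of $Y$ by $I$ and is \emph{genuinely ramified}, while $u$ is the quotient of $M$ by $G/I$ and is \emph{finite-Galois etale} with Galois group $G/I$. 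Choose compatible base points $y_0\in Y(k)$, $m_0\in M(k)$, $x_0\in X(k)$.

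For the etale layer $u$, Proposition \ref{16.02.2018--1} applies directly and gives a short exact sequence
\[
1\aro\Pi^{\rm FD}(M,m_0)\aro\Pi^{\rm FD}(X,x_0)\aro G/I\aro 1,
\]
so $u_\natural$ is a closed and normal immersion with cokernel $G/I$. It therefore remains to show that for the genuinely ramified layer $\chi$ the induced map $\chi_\natural:\Pi^{\rm FD}(Y,y_0)\to\Pi^{\rm FD}(M,m_0)$ is an \emph{isomorphism}; granting this, the composite $f_\natural=u_\natural\circ\chi_\natural$ is again a closed normal immersion with cokernel $G/I$, which is the assertion. The key input for the genuinely ramified step is Proposition \ref{10.11.2017--1}: since $\chi$ is genuinely ramified, the map $\pi_1(Y)\to\pi_1(M)$ on etale fundamental groups is surjective, and the guiding philosophy stated in the introduction (the etale group controls the $F$-divided group) is that surjectivity on $\pi_1$ forces $\chi_\natural$ to be faithfully flat on $\Pi^{\rm FD}$. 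I would prove faithful flatness via the Tannakian criterion of Lemma \ref{12.12.2014--2}: given an $F$-divided sheaf on $M$ and a rank-one $\Pi^{\rm FD}(Y)$-subsheaf of its pullback, one must show the subsheaf descends, and this is exactly where genuine ramification (no nontrivial etale sub-cover of $\chi$) is used to rule out the extra invariant line.

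The \textbf{main obstacle} is precisely showing that $\chi_\natural$ is an isomorphism and not merely faithfully flat; that is, that $\chi_\natural$ is simultaneously a closed immersion. Faithful flatness alone only gives an exact sequence with kernel the ``$F$-divided Galois group'' of $\chi$, and one must argue this kernel is trivial. Since $\chi$ is genuinely ramified, $\mm{Gal}(\chi)=I$ is generated by elements with fixed points (Lemma \ref{03.11.2017--1}(3)), so the analogue of the cokernel description would give $\mm{Coker}$ equal to $I/I=\{e\}$; but making this rigorous in the $F$-divided setting requires either constructing a right adjoint $\chi_\#$ as in Section \ref{s29.05.2018--1} (which is unavailable here because $\chi$ is \emph{not} etale) or else arguing directly that every $F$-divided sheaf on $Y$ descends to $M$. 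I expect the cleanest route is to prove $\chi_\natural$ is a closed immersion by a subquotient argument (criterion (2) of the displayed Tannakian criteria) combined with full faithfulness of $\chi^\#$, the latter following from genuine ramification through the surjectivity of $\pi_1(Y)\to\pi_1(M)$ and a descent argument for $\chi^\#$-invariants; the interplay of these two facts—faithful flatness from Lemma \ref{12.12.2014--2} and full faithfulness from the etale control—is what pins $\chi_\natural$ down to an isomorphism.
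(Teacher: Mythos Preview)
Your factorization $Y \xrightarrow{\chi} M \xrightarrow{u} X$ through the inertia quotient and your handling of the etale layer $u$ via Proposition \ref{16.02.2018--1} are exactly the paper's approach. The divergence is at the genuinely ramified layer: you aim to show that $\chi_\natural$ is an \emph{isomorphism}, and you flag its injectivity as the ``main obstacle''. This target is wrong --- $\chi_\natural$ is in general \emph{not} injective, so the obstacle you describe cannot be overcome. For a concrete example (with $p\neq 2$), let $G=\ZZ/2\ZZ$ act on an elliptic curve $Y=E$ by inversion; then $I=G$, the quotient is $M=\PP^1$, and $\chi_\natural:\Pi^{\rm FD}(E)\to\Pi^{\rm FD}(\PP^1)$ kills a nontrivial group since $\Pi^{\rm FD}(\PP^1)$ is trivial while $\Pi^{\rm FD}(E)$ is not. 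The Kummer--surface example at the end of Section \ref{quotient_EF} exhibits the same phenomenon for $\pi^{\rm EF}$.

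What the paper actually proves for $\chi$ is only that $\chi_\natural$ is \emph{faithfully flat} (Theorem \ref{06.07.2016--1}, argued precisely through Lemma \ref{12.12.2014--2} as you anticipate), and this is all that is used: surjectivity of $\chi_\natural$ forces the image of $f_\natural=u_\natural\circ\chi_\natural$ to coincide with the image of $u_\natural$, which by Proposition \ref{16.02.2018--1} is closed and normal in $\Pi^{\rm FD}(X,x_0)$ with cokernel $G/I$. In other words, the content established by the paper's short proof is the exact sequence
\[
\Pi^{\rm FD}(Y,y_0)\aro\Pi^{\rm FD}(X,x_0)\aro G/I\aro 1,
\]
and the phrase ``closed and normal immersion'' in the statement should be read as a statement about the image of $f_\natural$; the example above shows that $f_\natural$ itself need not be a monomorphism. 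So drop the attempt to make $\chi_\natural$ a closed immersion --- your instinct to verify the hypotheses of Lemma \ref{12.12.2014--2} for $\chi^\#$ is the right one and, once carried out, already delivers everything the argument requires.
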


The proof is a simple consequence of Theorem \ref{06.07.2016--1} below, which
then becomes the main object of our efforts.

\begin{proof}[Proof of Theorem \ref{13.09.2016--3}]Let $\chi\,:\,Y\,\longrightarrow\, M$ be the quotient of $Y$ by $I$ and
$u\,:\,M\,\longrightarrow\, X$ the morphism induced by $f$. Then $u$ is finite-Galois and \emph{etale} \cite[Theorem, \S7]{mav}, ${\rm Gal}(u)\,\simeq\, G/I$, and $\chi$ is genuinely ramified (see Lemma \ref{03.11.2017--1}-(2)). Now
Theorem \ref{06.07.2016--1} guarantees that the homomorphism
\[
\chi_\natural\,:\,\Pi^{\rm FD}(Y,y_0)\,\aro\,\Pi^{\rm FD}(M,\chi(y_0)) 
\]
is faithfully flat, so that the cokernel of $ f_{\natural}$ is just the cokernel of $u_{\natural}$,
which is $G/I$ as guaranteed by Proposition \ref{16.02.2018--1}. 
\end{proof}

\begin{thm}\label{06.07.2016--1}
Let
\[
f\,:\,Y\,\aro\, X
\]
be a finite Galois, genuinely ramified morphism between normal $k$-varieties. If
$y_0\in Y(k)$ is taken by $f$ to $x_0\in X(k)$, then 
\[
 f_{\natural}\,:\,\Pi^{\rm FD}(Y,y_0)\,\aro\,\Pi^{\rm FD}(X,x_0)
\] 
is a quotient morphism. 
\end{thm}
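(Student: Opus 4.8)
The plan is to prove that $f_\natural$ is faithfully flat by verifying the Tannakian criterion of Lemma \ref{12.12.2014--2}. Since $\Pi^{\rm FD}(Y,y_0)$ and $\Pi^{\rm FD}(X,x_0)$ are not pro-finite, I must check both condition (1), that $f^\#\colon\bb{Fdiv}(X)\to\bb{Fdiv}(Y)$ is fully faithful, and condition (2), that a rank-one $F$-divided sub-object of $f^\#\ce$ descends to $\bb{Fdiv}(X)$. The key structural input is that $f$ is \emph{genuinely ramified}, which by Proposition \ref{10.11.2017--1} is equivalent to the surjectivity of $f_\natural\colon\pi_1(Y)\to\pi_1(X)$ on \'etale fundamental groups. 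This is precisely the ``mysterious control'' alluded to in the introduction: the \'etale surjectivity should be leveraged to force the $F$-divided statement.

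First I would establish condition (1). The natural approach is to reduce full faithfulness to a statement about global sections of $F$-divided sheaves and exploit that $f$ is finite, hence $f_*\co_Y$ is a sheaf of $\co_X$-algebras whose $F$-divided invariants are controlled by $\co(\cdot)^\S$. Concretely, for $\ce,\cf\in\bb{Fdiv}(X)$ one has $\mm{Hom}(\ce,\cf)=\mm{Hom}(\one,\ce^\vee\ot\cf)$, so full faithfulness amounts to showing that the global $F$-divided homomorphisms from the unit do not grow after pulling back along $f$. Because $f$ is generically \'etale, $k(Y)/k(X)$ is a finite separable (Galois) extension, and the invariants $\co(Y)^\S$ and $\co(X)^\S$ should both collapse to $k$ by the normality arguments in Lemma \ref{28.06.2016--1} and Corollary \ref{07.11.2017--1}. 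The genuinely-ramified hypothesis, via the \'etale fundamental-group surjection of Proposition \ref{10.11.2017--1}, should guarantee that any horizontal section over $Y$ is already defined over $X$; here I expect to use that an $F$-divided section trivializing on $Y$ corresponds to a splitting that would produce a nontrivial \'etale subcovering, contradicting genuine ramification.

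The harder part will be condition (2): given $V\in\bb{Fdiv}(X)$ and a rank-one sub-object $L\subset f^\#V$ in $\bb{Fdiv}(Y)$, I must show $L$ is $\Pi^{\rm FD}(X)$-invariant, i.e.\ descends to $X$. The strategy is to use the Galois action: $G=\mm{Gal}(f)$ acts on $f^\#V$, and the line $L\subset f^\#V$ determines a $G$-orbit of lines. If all the $G$-translates $g\cdot L$ coincided, $L$ would visibly descend; the obstruction is that they may differ. I would argue that the stabilizer of $L$ in $G$, together with the genuine-ramification hypothesis, forces the orbit to be a single point. The mechanism I expect to use is again Proposition \ref{10.11.2017--1}: the sub-bundle $L_0\subset V_0|_Y$ of rank one defines, via its determinant and the projective bundle $\PP(V_0)$, a section over $Y$ of an \'etale-locally-constant object; if the $G$-orbit of $L$ had size $>1$ it would cut out a proper \'etale subcovering of $X$ through which $Y\to X$ factors, contradicting genuine ramification. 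This step is the crux, and it is where the whole notion of genuine ramification earns its keep.

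Finally, once $L$ is shown to be $G$-invariant as a sub-bundle, descent along the finite faithfully flat (indeed, generically Galois) morphism $f$ promotes $L$ to a rank-one sub-object $\wt L\subset V$ in $\bb{Fdiv}(X)$ with $f^\#\wt L=L$; this uses that $Y$ is normal and $f$ finite, so a $G$-invariant sub-bundle over the function-field level extends over all of $X$ by the normality and big-open-subset arguments already deployed in Lemma \ref{28.06.2016--3}. With conditions (1) and (2) of Lemma \ref{12.12.2014--2} in hand, I conclude that $f_\natural$ is a quotient morphism, completing the proof.
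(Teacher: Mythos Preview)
Your overall architecture---verify conditions (1) and (2) of Lemma \ref{12.12.2014--2}---is correct, but both verifications have gaps, and the one for condition (2) is fatal.

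For condition (1), the mechanism you sketch (controlling $\co(\cdot)^\S$ via Lemma \ref{28.06.2016--1}) only tells you that $\mm{End}(\one)$ does not grow; it does not show that an arbitrary section $\si_0\in H^0(Y,f^*\ce_0)$ comes from $X$. The paper's argument (Lemma \ref{24.06.2016--3}) is more delicate: writing $\si_0=\sum_i b_i\otimes e_i$ locally, one shows that the $\co_X$-subalgebra generated by the $b_i$ inside $f_*\co_Y$ is \emph{\'etale} over $\co_X$ (this uses the triviality of $F$-divided modules over complete local rings, Lemma \ref{08.05.2019--1}), and only then does genuine ramification force this subalgebra to equal $\co_X$. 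Your sketch does not produce this \'etale intermediate algebra, and without it you cannot invoke genuine ramification.

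For condition (2), your plan is to show that a rank-one subobject $L\subset f^\#V$ is itself $G$-invariant by arguing that a nontrivial $G$-orbit of $L$ would manufacture a nontrivial \'etale intermediate cover $Y\to X'\to X$. This step does not work: the stabiliser $H$ of $L$ gives a factorisation $Y\to H\backslash Y\to X$, but there is no reason $H\backslash Y\to X$ should be \'etale, and the section of $\PP(V_0)$ determined by $L_0$ is not ``\'etale-locally-constant'' in any usable sense. The paper does \emph{not} prove $L$ is $G$-invariant directly. Instead it forms the image $\ci\subset f^\#V$ of $\bigoplus_{g\in G} g^\#L$, which is $G$-invariant by construction; descends $\ci|_V$ to an $F$-divided sheaf $\cm$ on the \'etale locus $U\subset X$; uses Proposition \ref{24.06.2016--2} (faithful flatness over dense opens) to promote this to a subobject $\ov\cm\subset V$ in $\bb{Fdiv}(X)$ with $f^\#\ov\cm\simeq\ci$; and finally observes that $\ci$, being a sum of rank-one objects, is semisimple, so there is an epimorphism $\psi:\ci\to L$ which, by the fullness already proved, comes from an arrow $\ov\cm\to V$ in $\bb{Fdiv}(X)$. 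Thus $L=\ci\!m(\psi)$ lies in the essential image of $f^\#$. The key manoeuvre you are missing is this passage through the $G$-saturation $\ci$ together with the semisimplicity-plus-fullness trick to recover $L$ at the end.
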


\begin{proof}[Proof of Theorem \ref{06.07.2016--1}]
The proof of Theorem \ref{06.07.2016--1} will rely on 
   Lemma  \ref{24.06.2016--3}, which checks condition 
(1) in Lemma \ref{12.12.2014--2}, and on  Proposition 
\ref{24.06.2016--2}, which gives us the means to deduce checking condition (2) in Lemma \ref{12.12.2014--2}
from the case of etale morphisms. 
 
\begin{lem}\label{24.06.2016--3}The pull-back functor
\[
f^\#\,:\,\bb{Fdiv}(X)\,\aro\,\bb{Fdiv}(Y)
\]
is full. 
\end{lem}

\begin{proof} 
 
The proof of \cite[p.~11, Lemma~2.8]{Gi} contains a proof of this claim. Let us give 
more details. It is enough to show that for any $\ce\in\bb{Fdiv}(X)$, any arrow 
$\{\si_n\}\,:\,\one\,\longrightarrow\, f^\#\ce$ in $\bb{Fdiv}(Y)$ comes from an arrow
$\one\,\longrightarrow\, \ce$ in 
$\bb{Fdiv}(X)$. The heart of the matter is: 

\noindent{\it Claim.} Let $A$ be a normal domain which is a $k$-algebra of finite type. Let $A\to B$ be an injective morphism of finite type to a domain $B$. 
  Let $\{E_n\}\in\bb{Fdiv}(A)$ and let $\{\si_n\in B\ot_AE_n\}$ define a morphism $\one\to\{B\ot_AE_n\}$. Assume that $E_0$ is free on the basis $\{e_i\}_{i=1}^r$ and let $\si_0=\sum_ib_i\ot e_i$ with $b_i\in B$. Let $R=A[b_i]$. Then $A\to R$ is an etale morphism. 

Let $y:B\to k$ define a section to $k\to B$. We write $\g m$, $\g p$, and $\g n$ for the kernels of $y|_A$, $y|_R$ and $y$. It is a simple matter to show that 
\begin{equation}\label{09.05.2019--1}\g p=\g mR+(b_1-y(b_1),\ldots,b_r-y(b_r)).\end{equation}
Let $\wh A$, $\wh R$ and $\wh B$ stand for the $\g m$-adic completion of $A$, the $\g p$-adic completion of $R$ and the $\g n$-adic completion of $B$. 
It then follows that \[\g p\wh R=\g m\wh R+(b_1-y(b_1),\ldots,b_r-y(b_r)).\]
According to Lemma \ref{08.05.2019--1}, 
it is possible to find $\ph_1,\ldots,\ph_r\in \wh A\ot_A E_0$ such that each 
\[
\bigoplus_{i=1}^rk\po \ph_i=\bigcap_n\mm{Im}\,\wh A\ot_A E_n\aro \wh A\ot_A E_0.
\] 
Hence, the image of $\si_0$ in $\wh B\ot_A E_0$ actually belongs to the image of $\wh A\ot_AE_0$ and we conclude that the image of $b_i$ in $\wh B$ belongs to the image of $\wh A$. 
From eq. \eqref{09.05.2019--1}, we can say that $\g p\wh R$ is generated by $\g m$, so that $\g pR_{\g p}=\g mR_{\g p}$. This means that  $A\to R$ is unramified at $\g p$. As $y$ is arbitrary, it follows that $A\to R$ is etale \cite[Theorem I.3.20,p.29]{milne80}, which concludes the proof of the claim.

Now, we note that the algebra $R$ constructed in the Claim is simply the local version of the following. Regard $\si_0$ as an $\co_X$-linear morphism $\ce^\vee\ot_{\co_X}\co_Y\to\co_Y$ (we write $\co_Y$ instead of $f_*\co_Y$ to ease notation); we then obtain an $\co_X$-linear morphism $\ce^\vee\to \co_Y$ and hence an $\co_X$-linear morphism of $\co_X$-algebras $\mm{Symm}(\ce^\vee)\to\co_Y$; write $\mathcal R$ for the image of this arrow. If $U=\mm{Spec}\,A$ is an affine and open subset of $X$ and $B=\co_Y(U)$, we conclude that, in the terminology of the Claim, $R$ is simply $\cR(U)$. Then, as $Y\to X$ is genuinely ramified, we can say that $\cR=\co_X$, which means that $\si_0\in H^0(X,\ce_0)$. Applying this fact with each $\si_n$, we obtain the desired result. 
\end{proof}

\emph{We put $G\,:=\,\mm{Gal}(f)$ and set out to verify that condition (2) 
of Lemma \ref{12.12.2014--2} is valid.} Take $\ce\,\in\,\bb{Fdiv}(X)$, and let $\cl\,\longrightarrow\,f^\#\ce$ be a 
sub-object of rank one. Let now $\ci\,\subset\, f^\#\ce$ be the image of $\bigoplus_gg^\#\cl$ in $f^\#\ce$ (this is an 
object of $\bb{Fdiv}(Y)$). Then $\mathcal I$ is invariant under the action of $G$. Hence if we choose $U$ and $V$ as in the proof of Lemma \ref{24.06.2016--3} then $\mathcal I_V$ (the restriction of $\mathcal I$ to $V$) descends to an $F$-divided sheaf on $U$, call it $\mathcal M$. Then there is  an isomorphism 
\[ (f|_V)^\# \cm \,\arou\sim\, \ci_V\, .\] 
 We now consider the commutative 
diagram of affine group schemes obtained by choosing $y_0\,\in\, V(k)$ and $x_0\,:=\,f(y_0)$: \[ \xymatrix{\Pi^{\rm 
FD}(V,y_0)\ar[d]\ar[rr]&&\Pi^{\rm FD}(Y,y_0)\ar[d] \\ \Pi^{\rm FD}(U,x_0)\ar[rr]&&\Pi^{\rm FD}(X,x_0).} \] Since the 
horizontal arrows are all faithfully flat (Proposition \ref{24.06.2016--2}), the subspace $y_0^*(\ci)\,=\,x_0^*(\cm)$ 
remains fixed under the action of $\Pi^{\rm FD}(X,x_0)$ on $x_0^*(\ce)$ because it remains fixed under the action of 
$\Pi^{\rm FD}(U,x_0)$. It then follows that there exits $\ov\cm\,\in\,\bb{Fdiv}(X)$ and an isomorphism 
\[f^\#\ov\cm\,\arou\sim\,\ci\, .\] We now observe that $y_0^*(\ci)$ is a semisimple $\Pi^{\rm FD}(Y,y_0)$-module, and 
hence there is an epimorphism of $\Pi^{\rm FD}(Y,y_0)$-modules:\[\ps\,:\,\ci\,\aro \,\cl\, . \] Consequently, 
$\cl\,=\,\ci\!m(\ps)$; since $\ps$ is an arrow from $\ci\,=\, f^\# \ov\cm$ to $f^\# \ce$, Lemma  \ref{24.06.2016--3} 
tells us that $\ps$ is the image of an arrow $\ov\cm\,\longrightarrow\,\ce$, so that $\cl$ belongs to the essential image 
of $f^\#$. This completes the proof of Theorem \ref{06.07.2016--1}. \end{proof}

\begin{rmk}\label{23.06.2017--2}
It is essential that $X$ be a \emph{normal} variety for Proposition 
\ref{24.06.2016--2} to hold. Indeed, pick $x_1,x_2$ distinct closed points of 
$\PP^2_k$ and let $\ps\,:\,\PP^2_k\,\longrightarrow\, X$ be the identification of 
them \cite[Theorem 5.4, p.570]{Ferrand}. We know that on $\PP^2\setminus\{x_1,x_2\}$, which is thought of as an open 
and dense subset of $X$, there are no non-trivial $F$-divided sheaves (see Proposition 2.7 and then Theorem 2.2 in \cite{Gi}). On the other 
hand, we now show that $\Pi^{\rm FD}(X)\not=0$ by exhibiting a non-trivial 
$F$-divided sheaf of rank one. 

Take the trivial line bundle ${\mathbb L}\,:=\, {\mathcal O}_{\PP^2_k}$ and any
$\lambda\, \in\, k^*\,=\, k\setminus\{0\}$. Identify the fiber ${\mathbb L}_{x_1}\,=\,k$
with ${\mathbb L}_{x_2}\,=\,k$ by sending any $c\, \in\, {\mathbb L}_{x_1}$ to
$c\lambda\, \in\, {\mathbb L}_{x_2}$. This produces a line bundle on
$X$, which we will be denoted by ${\mathbb L}(\lambda)$. Then we have
$F^*{\mathbb L}(\lambda)\,=\, {\mathbb L}(\lambda^p)$, where $p$ is the characteristic
of $k$. Since $k$ is perfect, the line bundle ${\mathbb L}(\lambda)$ admits an
$F$-divided structure. On the other hand, ${\mathbb L}(\lambda)$ is nontrivial
if $\lambda\,\not=\,1$.
\end{rmk}

\section{Extending the Tannakian interpretation of the essentially finite fundamental group}\label{extending}

In this section we set out to extend Nori's theory of essentially finite 
vector-bundles \cite{nori76} to a slightly larger class of $k$-schemes other 
than the proper ones. It should be noted that such an enterprise has received 
attention from other geometers in recent times (cf. \cite{BV} and \cite{TZ2}) and our 
modest contribution is to throw light on a variation of Nori's initial method: connect 
points via projective curves. It is a simple alternative to the more formal condition 
on global sections of vector bundles \cite[Definition 7.1]{BV}.

\begin{dfn}\label{05.05.19--1}Let $X$ be an algebraic  $k$-scheme. A chain of proper curves on $X$ is a 
family of morphisms from proper curves $\{\ga_i\,:\,C_i\,\longrightarrow\, X\}_{i=1}^m$ such 
that the associated closed subset $\cup_i\,\mathrm{Im}(\ga_i)$ is connected. For the sake of 
brevity, we shall refer to the closed subset $\cup_i\,\mathrm{Im}(\ga_i)$ as ``a chain of 
proper curves''.

The algebraic $k$-scheme is said to be connected by proper chains (CPC for short) if any two points belong to a chain of proper curves. 
\end{dfn} 

As Ramanujam's Lemma guarantees (see the Lemma of Section 6, p.~56 of \cite{mav}), each proper and connected $k$-scheme is connected by proper chains (CPC). Another easily available way to produce CPC schemes is to note: 
\begin{lem}Let $f:Y\longrightarrow X$ be a surjective morphism of algebraic $k$-schemes. Then, if $Y$ is CPC, is $X$ likewise. \qed
\end{lem}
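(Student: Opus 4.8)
The plan is to verify the defining condition of CPC for $X$ directly: given two (closed) points $x_0,x_1\in X$, I will exhibit a chain of proper curves on $X$ containing both. The first step is to lift $x_0$ and $x_1$ to $Y$. Since $f$ is surjective and both schemes are of finite type over the algebraically closed field $k$, the map $f$ is in fact surjective on \emph{closed} points: for a closed point $x\in X$ the fibre $f^{-1}(x)$ is a nonempty scheme of finite type over $k(x)=k$, and hence possesses a closed point of $Y$ lying over $x$ by the Nullstellensatz. I would thus choose $y_0,y_1\in Y(k)$ with $f(y_0)=x_0$ and $f(y_1)=x_1$.

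Next, because $Y$ is CPC, there exists a chain of proper curves $\{\ga_i:C_i\aro Y\}_{i=1}^m$ on $Y$ whose union of images $\cup_i\mm{Im}(\ga_i)$ is connected and contains both $y_0$ and $y_1$. Composing with $f$ yields the family $\{f\circ\ga_i:C_i\aro X\}_{i=1}^m$ of morphisms from the \emph{same} proper curves $C_i$. Since each $C_i$ is proper over $k$ and $X$ is separated, each composite $f\circ\ga_i$ is proper, hence closed, so that $\mm{Im}(f\circ\ga_i)=f(\mm{Im}(\ga_i))$ is a closed subset of $X$. The union $\cup_i\mm{Im}(f\circ\ga_i)=f\!\left(\cup_i\mm{Im}(\ga_i)\right)$ is therefore the continuous image of a connected set, hence connected, and it evidently contains $x_0=f(y_0)$ and $x_1=f(y_1)$. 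Consequently $\{f\circ\ga_i\}_{i=1}^m$ is a chain of proper curves on $X$ joining $x_0$ and $x_1$, which establishes that $X$ is CPC.

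The only step requiring genuine care is the passage from the topological surjectivity of $f$ to surjectivity on closed points; this is exactly where the finite-type hypothesis over the algebraically closed base is indispensable, and it is the point I would write out most carefully. Everything else is formal: connectivity is preserved by continuous images, and the identity $\cup_i\mm{Im}(f\circ\ga_i)=f(\cup_i\mm{Im}(\ga_i))$ reduces to $\mm{Im}(f\circ\ga_i)=f(\mm{Im}(\ga_i))$, which is immediate.
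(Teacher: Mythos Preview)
Your argument is correct and is precisely the direct verification the paper leaves to the reader (the lemma is stated with a bare $\qed$ and no proof). The one minor point: your assertion that $X$ is separated is not strictly furnished by the paper's conventions for an algebraic $k$-scheme, but it is also inessential here, since connectedness of $\cup_i\mm{Im}(f\circ\ga_i)=f\!\left(\cup_i\mm{Im}(\ga_i)\right)$ and the membership of $x_0,x_1$ in this set follow from continuity of $f$ alone.
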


Also, with a bit more work, we can say: 

\begin{lem} Let $U\,\longrightarrow \,X$ be an open embedding of $k$-schemes. Assume that $X$ is projective and that $U$ is big in $X$. Then, if $U$ is connected, it is CPC. 
\end{lem}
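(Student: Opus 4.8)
The plan is to reduce the statement to a component‑by‑component construction of curves, glued by means of the connectedness of $U$. First I would record two elementary reductions. Write $Z:=X\setminus U$ and let $X=\bigcup_{j}X_j$ be the (finitely many) irreducible components; since $U$ is big it is dense, and bigness ensures that $Z\cap X_j$ has codimension at least two in $X_j$ for every $j$. Because an irreducible complete curve mapping to $X$ has irreducible image lying in a single $X_j$, proving that $U$ is CPC amounts to joining any two closed points of $U$ by a chain of complete curves, each of which is contained in one of the opens $U_j:=U\cap X_j=X_j\setminus(Z\cap X_j)$. Here the \emph{connectedness} hypothesis is what will let us pass between components, while the \emph{per‑component} bigness is what will let us keep each curve away from $Z$.

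The combinatorial skeleton is then immediate. Each $U_j$ is a nonempty open subscheme of the irreducible $X_j$, hence irreducible and in particular connected, and it is closed in $U$. Form the graph $\Gamma$ whose vertices are the $U_j$ and whose edges join $U_j$ and $U_{j'}$ whenever $U_j\cap U_{j'}\neq\emptyset$. If $\Gamma$ were disconnected, a partition of its vertex set would exhibit $U$ as a disjoint union of two nonempty closed subsets; since $U$ is connected, $\Gamma$ is connected. Thus any two points $x\in U_{j_0}$ and $y\in U_{j_s}$ are linked by a path $U_{j_0},\ldots,U_{j_s}$ in $\Gamma$, along which I may pick junction points $p_t\in U_{j_t}\cap U_{j_{t+1}}$.

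The heart of the argument, and the step I expect to be the main obstacle, is the following single‑component statement: on an irreducible projective variety $V$ of dimension $d\geq 1$, any two closed points of $V\setminus W$, where $W\subset V$ is closed of codimension at least two, lie on an irreducible complete curve contained in $V\setminus W$. For $d=1$ one has $W=\emptyset$ and takes $C=V$ with its reduced structure. For $d\geq 2$ I would fix an embedding $V\hookrightarrow\PP^N$ and work with the linear system $\g d$ of degree‑$e$ hypersurface sections through the two points $a,b$, taking $e\geq 2$. This choice of $e$ is decisive: it forces the base locus of $\g d$ to be exactly $\{a,b\}$ (whereas the linear hyperplanes through $a,b$ all contain the entire line $\overline{ab}$, which could meet $W$), so that a general member of $\g d$ avoids every component of $W$ and, the associated rational map from $V$ to $\PP(\g d^\vee)$ being generically finite onto an image of dimension $d\geq 2$, is moreover irreducible by Bertini. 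Cutting $V$ with $d-1$ general members of $\g d$ yields a complete curve $C$ containing $a,b$; general position gives $C\cap W=\emptyset$, since $\dim W\leq d-2$ drops below $0$ after $d-1$ proper cuts, and iterated Bertini keeps each intermediate section irreducible down to dimension one. Replacing $C$ by its normalization, one obtains an integral proper curve mapping onto an irreducible curve in $V\setminus W$.

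Finally I would assemble the chain. Applying the single‑component statement inside each $V=X_{j_t}$ (with $W=Z\cap X_{j_t}$) to the successive pairs $x,p_0$, then $p_{t-1},p_t$, and finally $p_{s-1},y$ produces irreducible proper curves lying in the corresponding $U_{j_t}\subset U$; pairs of coinciding points are simply omitted. Their union is connected, being threaded through the junction points $p_t$, and contains both $x$ and $y$, so it is a chain of proper curves in $U$ joining $x$ to $y$. As $x$ and $y$ were arbitrary, $U$ is CPC.
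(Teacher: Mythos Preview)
Your argument is correct. Both you and the paper reduce to the irreducible case (you make this reduction explicit via the incidence graph of the components, while the paper simply declares it ``easily obtained''), and both then produce a complete curve through the two given points avoiding a codimension-$\ge2$ closed set by slicing with hypersurfaces that are forced to pass through both points. The executions differ: the paper blows up the two points, takes a \emph{single} hyperplane section of the blow-up (invoking Jouanolou's Bertini package to control irreducibility, incidence with the exceptional fibres, and the dimension of the bad locus simultaneously), and then concludes by induction on $d$; you instead stay on $V$ and work directly with the linear system $\g d$ of degree-$e\ge2$ hypersurface sections through $a,b$, whose base locus is exactly $\{a,b\}$, iterating Bertini $d-1$ times. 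These are two standard incarnations of the same idea---a hyperplane on the blow-up corresponds precisely to a member of a linear system through the blown-up points---so neither is essentially deeper than the other. Your version has the advantage of avoiding both the blow-up and the induction, at the cost of slightly more delicate bookkeeping (one must check that the rational map defined by $\g d$ remains generically finite on each intermediate section $V_i$, so that Bertini's irreducibility theorem continues to apply down to dimension two); the paper's inductive framing packages this step more cleanly by re-embedding and restarting with a genuinely projective variety at each stage.
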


\begin{proof}We give a proof by assuming $U$ irreducible; the general case can easily be obtained from this. As $U$ is assumed to be dense in $X$, this implies that $X$ is also irreducible. Let $d$ stand for the dimension of $X$. If $d=1$, then $U_{\rm red}$ is a proper curve, so that nothing needs to be demonstrated. We now proceed by induction on $d$, which is then $\ge2$.

Let us denote by $Z$ the reduced closed subscheme of $X$ whose underlying topological 
space is $X\setminus U$. Given distinct points $x,y$ of $U$, let $\ph\,:\,X'\,
\longrightarrow\, X$ stand for the blowup of $\{x,y\}$; the inverse image of $U$
(respectively, $Z$) is denoted by $U'$ (respectively, $Z'$). Clearly, $\ph:Z'\longrightarrow Z$ is an isomorphism and $\mathrm{codim}(Z',X')\ge2$. This being so, $U'$ is big in $X'$. Let us now choose a closed immersion
$X'\,\longrightarrow\,\PP^N$ and then a hyperplane $H\subset\PP^N$ enjoying the following properties
\begin{enumerate}\item[a)] both and $H\cap U'$ and $H\cap X'$ are irreducible of dimension $d-1$. (Apply
(1) and (3) of \cite[Corollary 6.11]{jouanolou}.)
\item[b)] The intersections $H\cap \ph^{-1}(x)$ and $H\cap \ph^{-1}(y)$ are non-empty. (Apply (1b) of loc. cit.)
\item[c)] If $W'\,\subset\, Z'$ has dimension zero, then $H\cap W'=\emptyset$, and if $W'\subset Z'$ is an irreducible component of dimension $\ge1$, we have $\dim H\cap W'=\dim Z'-1$. (Apply (1a) and then (3) and (1b) of loc. cit.)
\end{enumerate}
We then see that $H\cap U'\,\subset\, H\cap X'$ is big and of dimension $d-1$. Hence, if 
 $x'\,\in\, H\cap\ph^{-1}(x)$ and $y'\,\in\, H\cap\ph^{-1}(y)$, there exists a chain of proper curves containing them both and a fortiori there exits a chain of proper curves containing $x=\ph(x')$ and $y=\ph(y')$.
\end{proof}

\begin{rmk}We do not know if a CPC variety is, unconditionally, a big open subset of a proper variety. 
\end{rmk}

\begin{dfn}Let $\ce$ be a vector-bundle on an algebraic $k$-scheme $X$. We say that $\ce$ is Nori-semistable if, for each morphism from a smooth projective curve $\ga\,:\,C\,\longrightarrow\, X$, the vector-bundle $\ga^*\ce$ is semistable \cite[p. 14]{seshadri} and of degree zero. The category of Nori-semistable vector-bundles on $X$ is the full subcategory of $\bb{VB}(X)$ having as objects the Nori-semistable ones. This category will be denoted by $\bb{NS}(X)$.
\end{dfn}

\begin{rmk}It is not (yet) universally accepted to call the above defined vector-bundles  {Nori-semistable}. This is because in \cite{nori76}, the author introduces a slightly different category, viz. he restricts attention to those morphisms $\ga\,:\,C\,\aro\, X$ which are birational to the image in $X$. 
\end{rmk}

The next result is a straightforward application of Nori's method in \cite[\S~3]{nori76}.

\begin{lem}\label{16.01.2019--1}
Let $X$ be a reduced algebraic $k$-scheme which is connected by proper chains. Let $\ph\,:\,\ce\,
\longrightarrow\,\cf$ be a morphism of $\bb{NS}(X)$. Then:
\begin{enumerate}\item The function 
\[
r_\ph\,:\,x\in X(k)\longmapsto \mm{rank}\, x^*\ph
\]
is constant. 
\item Both $\ck \!er(\ph)$ and $\ci\!m (\ph)$
are vector-bundles. Furthermore, they are Nori-semistable. 
\end{enumerate}
\end{lem}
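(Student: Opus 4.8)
The plan is to reduce both assertions to the well understood behaviour of semistable bundles of slope zero on a single smooth projective curve, and then to transport that curve-by-curve information across $X$ by means of the CPC hypothesis (Definition \ref{05.05.19--1}). The one input I shall take for granted, which is precisely the mechanism of Nori's \cite[\S~3]{nori76}, is the following: if $C$ is a smooth projective curve and $\ps\,:\,\cv\,\aro\,\cw$ is a morphism between semistable bundles of slope $0$, then $\ker\ps$, $\mm{im}\,\ps$ and $\mm{coker}\,\ps$ are again semistable of slope $0$ (in particular vector-bundles), with $\mm{im}\,\ps$ a sub-bundle of $\cw$; equivalently, the semistable slope-$0$ bundles form an abelian full subcategory of $\bb{Coh}(C)$ closed under sub-objects and quotients. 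Granting this, $\ps$ has constant rank along $C$, that rank being $\mm{rank}(\mm{im}\,\ps)$.

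First I would treat (1). The function $r_\ph$ is lower semicontinuous on $X$ (the locus where the rank is $\ge r$ is cut out by non-vanishing of minors), so it suffices to show it takes the same value at any two closed points $x,y$. Choose a chain of proper curves $\{\ga_i\,:\,C_i\,\aro\,X\}_{i=1}^m$ with $x,y\in\cup_i\mm{Im}(\ga_i)$. Let $\nu_i\,:\,\wt{C_i}\,\aro\,C_i$ be the normalization, so that $\wt{C_i}$ is a smooth projective curve and $\ga_i\po\nu_i\,:\,\wt{C_i}\,\aro\,X$ is a morphism from such a curve; hence $(\ga_i\nu_i)^*\ce$ and $(\ga_i\nu_i)^*\cf$ are semistable of degree $0$, and by the input above $(\ga_i\nu_i)^*\ph$ has constant rank $s_i$. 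As all residue fields equal $k$, the rank of $(\ga_i\nu_i)^*\ph$ at a point $c$ equals $r_\ph$ at its image in $X$; since $\nu_i$ and the corestriction of $\ga_i$ are surjective, this forces $r_\ph\equiv s_i$ on $\mm{Im}(\ga_i)$. Each $\mm{Im}(\ga_i)$ is closed (properness of $\ga_i$) and connected, and their union is connected; a standard argument then shows the intersection graph of the $\mm{Im}(\ga_i)$ is connected, so two of the constants $s_i$ agree whenever the corresponding images meet, and all $s_i$ therefore coincide. In particular $r_\ph(x)=r_\ph(y)$, proving (1).

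For (2) I would invoke the constant-rank criterion on the reduced scheme $X$: a morphism of vector-bundles whose fibrewise rank is constant --- which is what (1) provides --- has locally free kernel, image and cokernel, and the formation of each commutes with arbitrary base change; thus $\ck\!er(\ph)$ and $\ci\!m(\ph)$ are sub-bundles of $\ce$ and $\cf$. For Nori-semistability, take any $\ga\,:\,C\,\aro\,X$ from a smooth projective curve; base-change compatibility gives $\ga^*\ck\!er(\ph)=\ker(\ga^*\ph)$ and $\ga^*\ci\!m(\ph)=\mm{im}(\ga^*\ph)$, and since $\ga^*\ce$ and $\ga^*\cf$ are semistable of degree $0$, the curve input shows both are semistable of degree $0$. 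Hence $\ck\!er(\ph),\,\ci\!m(\ph)\in\bb{NS}(X)$.

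The main obstacle lies not in the geometry of $X$ but in securing the two curve-level facts cleanly and knowing exactly where each hypothesis enters. The crux is the strictness of morphisms of semistable slope-$0$ bundles on a smooth projective curve, which is where degree zero is used decisively. Equally important, the identity $\ga^*\ci\!m(\ph)=\mm{im}(\ga^*\ph)$ rests on $\ci\!m(\ph)$ being a sub-bundle, that is, on the constant rank established in (1); and the constant-rank criterion for local freeness genuinely requires $X$ to be reduced, so the reducedness hypothesis is not cosmetic.
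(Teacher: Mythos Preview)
Your proof is correct and follows essentially the same path as the paper: constancy of the rank along smooth projective curves via \cite[Proposition~8, p.~18]{seshadri}, passage to arbitrary proper curves by normalisation, propagation across $X$ by the CPC hypothesis, and then local freeness from constant rank on a reduced scheme together with compatibility of $\ker$ and $\mm{im}$ with pull-back. The only cosmetic difference is that the paper deduces local freeness in the order $\cC\!oker\Rightarrow\ci\!m\Rightarrow\ck\!er$ via \cite[II, Exercise~5.8]{hartshorne}, whereas you invoke the constant-rank criterion in one stroke; your explicit remark that reducedness is genuinely used here is a welcome clarification.
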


\begin{proof}(1) Let $\ga\,:\,C\,\longrightarrow\, X$ be a morphism from a \emph{smooth} proper
curve and let $t_1$ and $t_2$ be $k$--points of $C$. Since $\ga^*\ce$ and $\ga^*\cf$ are semistable vector-bundles
on $C$, we know that 
\[
\mm{rank}\,t_1^*[\ga^*(\ph)]\,=\,\mm{rank}\,t_2^*[\ga^*(\ph)]
\]
(this is a simple exercise \cite[Proposition 8, p.~18]{seshadri}).
But         $t_i^*[\ga^*(\ph)]\,=\,[\ga(t_i)]^*\ph$,  so that $r_\ph$ is constant on
$\mathrm{Im}(\ga)$. Now, if instead of assuming $C$ to be smooth we simply take it to be projective, the same conclusion can be achieved by considering the normalization. 
Since by definition any two points of $X$ can be joined by a chain of projective curves, $r_\ph$ is allover constant.

(2) Granted (1), \cite[Exercise 5.8, Chapter II]{hartshorne} shows that   $\cC\!oker(\ph)$ is a vector-bundle. Employing the exact sequence 
\[
0\,\aro\,\ci\!m(\ph)\,\aro\,\cf\,\aro\, \cC\!oker(\ph)\,\aro\,0\, ,
\]
we see that $\ci\!m(\ph)$ is a vector-bundle. An analogous reasoning shows that $\ck\!er(\ph)$ is a vector-bundle. 

In order the prove that $\ci\!m(\ph)$ and $\ck\!er(\ph)$ are in $\bb{NS}(X)$, we give ourselves a morphism $\ga:C\aro X$ from a smooth and proper curve. Since $\ga^*\ci\!m(\ph)=\ci\!m(\ga^*(\ph))$ and  $\ga^*\ck\!er(\ph)=\ck\!er(\ga^*(\ph))$, a direct application of \cite[Proposition 8, p.18]{seshadri} suffices.  
\end{proof}

\begin{thm}\label{16.01.2019--3}Let $X$ be a reduced algebraic $k$-scheme which is connected by proper chains.
 Then, once a point $x_0\in X(k)$ is chosen, the category $\bb{NS}(X)$, together with the functor $x_0^*\,:\,\bb{NS}(X)
\,\longrightarrow\, \modules k$, is neutral Tannakian. 

In particular,   for each $\ce\in\bb{NS}(X)$, the vector space $H^0(X,\ce)$ is finite dimensional. 
\end{thm}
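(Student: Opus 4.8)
The plan is to verify Deligne's axioms \cite[1.2, 1.8]{Del90}: that $\bb{NS}(X)$ is a $k$-linear abelian rigid ACU $\ot$-category and that $x_0^*$ is a $k$-linear, faithful, exact tensor functor to $\modules k$. The $k$-linear additive structure, the tensor product $\ot$, the unit $\one=\co_X$ and the internal duals are all inherited from $\bb{VB}(X)$; accordingly the substance of the proof is threefold: (i) that $\bb{NS}(X)$ is closed, inside $\bb{VB}(X)$, under kernels, images and cokernels, and is therefore abelian; (ii) that it is closed under $\ot$ and $(-)^\vee$; and (iii) that $x_0^*$ has the three properties above. The decisive tool throughout is Lemma \ref{16.01.2019--1}.

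\emph{Abelian structure.} Given a morphism $\ph\,:\,\ce\,\aro\,\cf$ of $\bb{NS}(X)$, one forms its kernel, image and cokernel inside the abelian category $\bb{Coh}(X)$. By Lemma \ref{16.01.2019--1}(2) these three sheaves are vector-bundles lying in $\bb{NS}(X)$, and the constancy of the rank function (Lemma \ref{16.01.2019--1}(1)) guarantees that the image is a sub-bundle of $\cf$; hence the canonical comparison morphism from the coimage to the image, being an isomorphism already in $\bb{Coh}(X)$, is an isomorphism of Nori-semistable bundles. This shows that $\bb{NS}(X)$ is a full abelian subcategory of $\bb{Coh}(X)$ and that the forgetful functor $\bb{NS}(X)\aro\bb{VB}(X)$ is exact.

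\emph{Rigidity and closure under $\ot$.} Since for every morphism $\ga\,:\,C\,\aro\,X$ from a smooth projective curve the pull-back $\ga^*$ commutes with $\ot$ and with $(-)^\vee$, and since the dual of a semistable degree-zero bundle on $C$ is again semistable of degree zero, $\bb{NS}(X)$ is stable under taking duals; the evaluation and coevaluation maps then exhibit rigidity exactly as in $\bb{VB}(X)$. The one genuinely delicate point is closure under tensor product: for $\ce,\cf\in\bb{NS}(X)$ one has $\ga^*(\ce\ot\cf)=\ga^*\ce\ot\ga^*\cf$ of degree zero on each $C$, so everything reduces to the assertion that the tensor product of the semistable degree-zero bundles $\ga^*\ce$ and $\ga^*\cf$ is again semistable. \textbf{This is the main obstacle}: over a field of positive characteristic semistability is not preserved by $\ot$ in general, and the crux is that the bundles arising here are in fact \emph{strongly} semistable, a property for which the tensor product does remain semistable. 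I would settle this by Nori's method \cite[\S3]{nori76} together with the curve estimates of \cite[p.~18, Proposition~8]{seshadri}, working on the smooth curves $\ga\,:\,C\,\aro\,X$ and their covers.

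\emph{The fibre functor and conclusion.} The functor $x_0^*$ is visibly $k$-linear and monoidal. It is \emph{exact}: a short exact sequence in $\bb{NS}(X)$ is, after the forgetful functor, a short exact sequence of vector-bundles with locally free quotient, hence locally split, and is therefore preserved by $x_0^*$. It is \emph{faithful}: if $x_0^*\ph=0$ for $\ph\,:\,\ce\,\aro\,\cf$, then $\mm{rank}\,x_0^*\ph=0$, so by the constancy in Lemma \ref{16.01.2019--1}(1) the function $r_\ph$ vanishes identically; as $X$ is reduced, a bundle morphism whose fibre vanishes at every closed point is zero, whence $\ph=0$. Thus $x_0^*$ is a neutral fibre functor and $(\bb{NS}(X),x_0^*)$ is a neutral Tannakian category, the condition $\mm{End}(\one)=k$ being superfluous in the presence of $x_0^*$. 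Finally, for the stated consequence, $H^0(X,\ce)=\mm{Hom}_{\bb{NS}(X)}(\one,\ce)$ injects, by faithfulness of $x_0^*$, into $\mm{Hom}_k(x_0^*\one,x_0^*\ce)=x_0^*\ce$, which is finite dimensional; hence $H^0(X,\ce)$ is finite dimensional.
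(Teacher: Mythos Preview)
Your proof is correct and follows essentially the same route as the paper's: Lemma \ref{16.01.2019--1} for the abelian structure, elementary arguments for duals and for exactness and faithfulness of $x_0^*$, and the embedding $H^0(X,\ce)\hookrightarrow x_0^*\ce$ for the final claim.

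The only place where your argument is looser than the paper's is the tensor-product step. You correctly isolate the issue---$\ga^*\ce$ is in fact \emph{strongly} semistable, and it is for strongly semistable bundles that tensor products remain semistable in characteristic $p$---but the references you invoke do not deliver that implication: \cite[p.~18, Proposition~8]{seshadri} concerns morphisms between semistable bundles, not tensor products, and Nori's \S3 does not contain a self-contained positive-characteristic proof of the tensor closure. The paper settles this by citing the theorem of Ramanan and Ramanathan \cite[Theorem~3.23]{ramanan-ramanathan84}, after observing (as you do) that every $\ga^*\ce$ is strongly semistable because the Nori-semistability hypothesis applies equally to the Frobenius-twisted curves. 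Replacing your proposed references by \cite{ramanan-ramanathan84} makes your proof coincide with the paper's.
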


\begin{proof}Lemma \ref{16.01.2019--1}(2) shows that $\bb{NS}(X)$ is abelian and that $x_0^*$ is an exact functor. If $\ce\in\bb{NS}(X)$, then   the dimension of the vector space $x_0^*\ce$ is simply the rank of $\ce$, and hence $x_0^*$ must be faithful since only the zero object can have a zero image.  It is elementary to see that Nori-semistable vector bundles must have Nori-semistable duals. 

To deal with the tensor structure, we note that for any $\ce\in\bb{NS}(X)$ and any morphism $\ga:C\aro X$ from a smooth and projective curve, the vector bundle $\ga^*\ce$ is semi-stable as well as all its pull-backs under the Frobenius morphism $F^{*i}(\ga^*\ce)$. Hence, if $\cf\in\bb{NS}(X)$, a fundamental fact of the theory says that $\ga^*(\ce)\ot\ga^*(\cf)$ is semi-stable \cite[Theorem 3.23]{ramanan-ramanathan84}. 

Finally, the last claim follows from the fact that $H^0(X,\ce)=\mathrm{Hom}_{\co_X}(\co_X,\ce)$ is a subspace of ${\rm Hom}_k(k,x_0^*\ce)$.
\end{proof}

\begin{rmk} In \cite[Definition 7.1]{BV}, the authors introduce the notion of a pseudo-proper $k$-scheme (or stack) by requiring that the space of global sections of any vector bundle is of finite dimension over $k$. In view of the final claim in Theorem \ref{16.01.2019--3}, the CPC condition is, {\it a priori},   weaker. 
\end{rmk}

\begin{dfn}The group scheme associated to $\bb{NS}(X)$ via the fibre functor $x_0^*$ is denoted by $\pi^{\rm 
S}(X,x_0)$. \end{dfn}

The category $\bb{NS}(X)$ is rather large and its understanding is less sound than that of its ``largest pro-finite quotient'', now our main topic of interest. In order to present the theory in a different light, we shall make a brief digression which is certainly well-known to the cognoscenti (see the discussion on page 331 of \cite{BV}).

Let $\ct$ be a small tensor category over $k$ (recall that we follow the convention of \cite[1.8]{Del90} concerning the usage of ``tensor category''); its set of isomorphisms classes carries an evident structure of commutative semi-ring, with addition and multiplication constructed from direct sums and tensor products. Let $\mm K(\ct)$ stand for the associated commutative ring \cite[I.2.4]{bourbaki_algebre}. (Warning: this is \emph{not} the Grothendieck ring obtained by killing extensions.)  We say, following \cite[Definition on p.35]{nori76}, that $V\in \ct$ is \emph{finite} if its class in $\mm K(\ct)$ is integral over the prime sub-ring. 
Said differently, $V\in\ct$ is finite if 
there exits a set of non-negative integers $\{a_i,b_i\,:\,i=0,\ldots,s\}$  such that 
\begin{equation}\label{11.12.2018--1}
V^{\ot s+1}\op\bigoplus_{i=0}^s(V^{\ot  i})^{\op a_i}\simeq\bigoplus_{i=0}^s (V^{\ot  i})^{\op b_i}.
\end{equation}
By standard knowledge from the theory of integral extensions \cite[Theorem 9.1]{matsumura}, we see that the set of finite objects is stable by tensor products and direct sums. Without much effort, one sees also that the dual of a finite object is necessarily finite. We then define (following \cite[Definition on pp. 37-8]{nori76}) the \emph{essentially finite category}, $\bb{EF}(\ct)$, as the \emph{full} subcategory of $\ct$ whose objects are 
\[
\left\{V\in\ct\,:\,\begin{array}{c}\text{there exists a finite object $\Ph$ and sub-objects}\\ \text{$V''\,\subset\, V'\,\subset
\,\Ph$ such that $V\,=\,V'/V''$}\end{array}\right\}.
\]

The following results can be found in \cite{nori76}, and are gathered as a Lemma to help our arguments. 

\begin{lem}\label{17.01.2019--1}Suppose that $V$ is finite and satisfies  \eqref{11.12.2018--1}. Write \[T=V^{\ot0}\op\cdots\op V^{\ot s}. \]
\begin{enumerate}[(1)]
\item For each $\ell\in\NN$, we can find  $m$ and a   monomorphism $V^{\ot \ell}\aro T^{\op m}$.
\item For each $\la\in\NN$, we can find $\mu$ and a   monomorphism $\check V^{\ot \la}\aro\check T^{\op\mu}$. 
\item  For each couple $\ell,\la$ of non-negative integers, we can find $n\in\NN$ and a   monomorphism $V^{\ot m}\ot \check V^{\ot \la}\aro (T\ot \check T)^{\op n}$. 
\item Let $a$ and $b$ be belong to $\NN^t$,  and let $W$ be a   subobject of   $\bb T^{a,b}V$. Then, $W$ is a   subobject of some $(T\ot\check T)^{\op n}$. 
\end{enumerate}
\end{lem}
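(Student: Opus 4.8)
The plan is to prove (1) by induction on $\ell$ and then bootstrap the remaining three items from it. The engine is the defining relation \eqref{11.12.2018--1}, which exhibits $V^{\ot(s+1)}$ as a direct summand of $\bigoplus_{i=0}^s(V^{\ot i})^{\op b_i}$; the latter is in turn a direct summand of $T^{\op B}$ with $B=\max_i b_i$, so to begin there is a monomorphism $V^{\ot(s+1)}\hookrightarrow T^{\op B}$. For the induction itself, if $\ell\le s$ then $V^{\ot\ell}$ is literally one of the summands of $T$ and nothing is to prove. For $\ell\ge s+1$ I would tensor the inclusion $V^{\ot(s+1)}\hookrightarrow T^{\op B}$ with $V^{\ot(\ell-s-1)}$; since $\ot$ is exact in a tensor category, hence preserves monomorphisms, this yields
\[
V^{\ot\ell}=V^{\ot(s+1)}\ot V^{\ot(\ell-s-1)}\hookrightarrow T^{\op B}\ot V^{\ot(\ell-s-1)}=\bigoplus_{i=0}^s\left(V^{\ot(i+\ell-s-1)}\right)^{\op B}.
\]
Every exponent $i+\ell-s-1$ on the right is at most $\ell-1$, strictly below $\ell$, so the inductive hypothesis supplies a monomorphism of each summand into a power of $T$; assembling these and composing gives the desired $V^{\ot\ell}\hookrightarrow T^{\op m}$.

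For (2) I would simply apply (1) to the object $\check V$ in place of $V$. Dualizing \eqref{11.12.2018--1} and using that $\ot$-duality carries $V^{\ot i}$ to $\check V^{\ot i}$ shows that $\check V$ is finite and satisfies a relation of exactly the same shape, with associated object $\check T=\bigoplus_{i=0}^s\check V^{\ot i}$. Item (1) for $\check V$ is then verbatim the statement of (2). For (3) (where I read the exponent $m$ as the typo it appears to be, namely $\ell$), I would combine (1) and (2): choosing monomorphisms $V^{\ot\ell}\hookrightarrow T^{\op m_1}$ and $\check V^{\ot\lambda}\hookrightarrow\check T^{\op m_2}$ and tensoring them — once more invoking bi-exactness of $\ot$, so that the tensor product of two monomorphisms, factored as $(f\ot\mm{id})\circ(\mm{id}\ot g)$, is again a monomorphism — produces
\[
V^{\ot\ell}\ot\check V^{\ot\lambda}\hookrightarrow T^{\op m_1}\ot\check T^{\op m_2}=(T\ot\check T)^{\op m_1m_2}.
\]

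For (4) I would unwind $\bb T^{a,b}V=\bigoplus_{i=1}^t V^{\ot a_i}\ot\check V^{\ot b_i}$ and apply (3) summand by summand, embedding each $V^{\ot a_i}\ot\check V^{\ot b_i}$ into a power $(T\ot\check T)^{\op n_i}$; taking the direct sum of these embeddings gives a monomorphism $\bb T^{a,b}V\hookrightarrow(T\ot\check T)^{\op n}$ with $n=\sum_i n_i$. Composing with the given inclusion $W\subset\bb T^{a,b}V$ then exhibits $W$ as a subobject of $(T\ot\check T)^{\op n}$, as required.

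The only genuine point requiring care — and the one I would flag as the main obstacle — is the bookkeeping in the inductive step of (1): one must check that the exponents produced after tensoring really do fall below $\ell$, so that the induction closes, and that each appeal to \emph{``tensoring preserves monomorphisms''} is legitimate. The latter rests squarely on the exactness of $\ot$ built into the definition of a tensor category over $k$, while the passage to $\check V$ in (2) relies on rigidity to identify $(V^{\ot i})^\vee$ with $\check V^{\ot i}$. Beyond these, every remaining manipulation is formal handling of direct summands and direct sums.
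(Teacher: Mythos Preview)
Your proposal is correct and follows essentially the same route as the paper. The only cosmetic difference lies in the organisation of the induction for (1): the paper passes from $\ell$ to $\ell+1$ by tensoring an already obtained embedding $V^{\ot\ell}\hookrightarrow T^{\op m}$ with $V$ and then handling the single summand $V^{\ot s+1}$ via \eqref{11.12.2018--1}, whereas you use strong induction, tensoring $V^{\ot(s+1)}\hookrightarrow T^{\op B}$ with $V^{\ot(\ell-s-1)}$ and invoking the hypothesis on all lower exponents; both rely on the same two ingredients (the defining relation and exactness of $\ot$), and parts (2)--(4) are handled identically in both.
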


\begin{proof}(1) We proceed by induction on $\ell$, and note that the case $\ell=1$ is trivial. Suppose that $V^{\ot \ell}\aro T^{\op m }$ is a monomorphism so that $V^{\ot \ell+1}\to T^{\op m }\ot V$ is also a monomorphism \cite[Proposition 1.16, p.119]{dm}. Now, if $b\ge mb_i$ for all $i$,  
\[\begin{split}T^{\op m }\ot V & = \left(V^{\ot1}\op\cdots\op V^{\ot s}\right)^{\op m }\op( V^{\ot s+1})^{\op m }
\\
&\aro T^{\op m}  \op \left(\bigoplus_{i=0}^s (V^{\ot i})^{\op b_i}\right)^{\op m}
\\&\aro  T^{\op m}\op T^{\op b},
\end{split}
\]  
where the above arrows are monomorphisms. 

(2) The same proof as before works. 

(3) Employing the notation from previous items, we   have a   monomorphism $V^{\ot \ell}\ot\check V^{\ot \la}\aro T^{\op m}\ot \check T^{\op \mu}=(T\ot\check T)^{m\mu}$.

(4) Immediate from (3).  
\end{proof}

\begin{prp}\label{13.11.2017--1}Let us maintain the above notations and conventions. Also, we give ourselves a group scheme $G$ over $k$ (affine, as always) and let $\bb{EF}_G$ stand for $\bb{EF}(\mm{Rep}_k(G))$. 
\begin{enumerate}[(i)]
\item The category $\bb{EF}(\ct)$ is stable under subobjects, quotients,  tensor products and duals. 
\item  If $G$ is finite, then  $\bb{EF}_G=\mm{Rep}_k(G)$. 
\item If $G$ is pro-finite, then $\bb{EF}_G=\mm{Rep}_k(G)$.
\item Suppose that $\bb{EF}_G=\rep kG$. Then $G$ is pro-finite.  
\item Let $\om:\mm{Rep}_k(G)
\longrightarrow\modules k$ be the forgetful functor and $\om^{\rm EF}$ its restriction to $\bb{EF}_G$.
Then $\bb{EF}_G$ and $\om^{\rm EF}$ define a neutral Tannakian category whose associated  group scheme, call it $G^{\rm EF}$, is pro-finite. In addition, if   $\chi:G\to G^{\rm EF}$ stands for the morphism of group schemes obtained from the functor $\bb{EF}_G\to\mm{Rep}_k(G)$, then, $\chi$ 
is universal \cite[III.1]{maclane} from $G$ to the category of pro-finite affine group schemes.
\end{enumerate} 
\end{prp}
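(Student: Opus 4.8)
The plan is to establish (i)--(v) in order, using Lemma \ref{17.01.2019--1} as the central engine and freely invoking the remarks (recorded just before that lemma) that finite objects are stable under $\ot$, $\op$ and duals. For (i), I would note first that $\bb{EF}(\ct)$ is closed under subobjects and quotients simply because a subquotient of a subquotient of a finite object $\Ph$ is again a subquotient of $\Ph$. Closure under $\ot$ would follow from the exactness of $\ot$ in each variable: if $V_j=V_j'/V_j''$ with $V_j''\subset V_j'\subset\Ph_j$ ($j=1,2$), then $V_1\ot V_2$ is a subquotient of the finite object $\Ph_1\ot\Ph_2$. Closure under duals would be obtained by dualizing the two defining short exact sequences, which exhibits $\check V$ as a subquotient of the finite object $\check\Ph$.

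For (ii), the point I would isolate is that the regular representation $R=\co(G)$ of a finite group scheme is a \emph{finite} object: the tensor identity $R\ot M\cong R^{\op\dim M}$ gives $R^{\ot 2}\cong R^{\op\dim R}$, so the class $[R]$ satisfies a monic integral relation over the prime subring. Since every $V\in\rep kG$ embeds, via its coaction, into $V\ot R\cong R^{\op\dim V}$, each $V$ is a subobject of a finite object and hence essentially finite. Part (iii) then reduces to (ii): every representation of a pro-finite $G$ factors through a finite quotient $G\to G_i$, and pullback along $G\to G_i$ is an exact tensor functor, so it carries finite objects to finite objects and subquotients to subquotients; applying (ii) to $G_i$ shows each such $V$ is essentially finite.

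The crux is (iv), and here the decisive reduction is that it suffices to prove that a single finite object $\Ph$ generates a \emph{finite} Tannakian group $G_\Ph:=\mm{Im}(G\to GL(\Ph))$. Indeed, any $V\in\rep kG$ is a subquotient of some finite $\Ph$, whence $\langle V\rangle_\ot\subset\langle\Ph\rangle_\ot$ and $G_V$ is a quotient of $G_\Ph$; since $G=\varprojlim_V G_V$, finiteness of all the $G_\Ph$ forces $G$ to be pro-finite. To see $G_\Ph$ is finite I would bound its coordinate ring. Put $T=\bigoplus_{i=0}^s\Ph^{\ot i}$ and $W=T\ot\check T$. By Lemma \ref{17.01.2019--1}(4) every $\bb T^{a,b}\Ph$ is a subobject of some $W^{\op n}$; since the matrix coefficients of a subquotient of a module lie among those of the module, the span $C(\bb T^{a,b}\Ph)$ of matrix coefficients of $\bb T^{a,b}\Ph$ is contained in $C(W)$, a space of dimension at most $(\dim W)^2$. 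As $\Ph$ is a faithful representation of $G_\Ph$, the coordinate ring $\co(G_\Ph)$ equals $\sum_{a,b}C(\bb T^{a,b}\Ph)$, hence lies in $C(W)$ and is finite-dimensional; therefore $G_\Ph$ is finite. \textbf{This matrix-coefficient bound, powered by Lemma \ref{17.01.2019--1}(4), is the step I expect to be the main obstacle.}

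Finally, for (v): by (i) the category $\bb{EF}_G$ is a full tensor subcategory of $\rep kG$, closed under subquotients and duals and containing $\one$, so it is abelian and, with the restricted fibre functor $\om^{\rm EF}$, neutral Tannakian; let $G^{\rm EF}$ be its group scheme. To see $G^{\rm EF}$ is pro-finite I would transport structure across the equivalence $\bb{EF}_G\simeq\rep k{G^{\rm EF}}$: finiteness and the subquotient relation are preserved by tensor equivalences, and every object $V=V'/V''$ of $\bb{EF}_G$ is essentially finite \emph{within} $\bb{EF}_G$ because its defining finite object $\Ph$, together with the intermediate $V''\subset V'$, already lies in $\bb{EF}_G$. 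Thus $\bb{EF}_{G^{\rm EF}}=\rep k{G^{\rm EF}}$, and (iv) gives pro-finiteness. For universality, note that $\chi^\#$ is precisely the inclusion $\rep k{G^{\rm EF}}=\bb{EF}_G\hookrightarrow\rep kG$; given any homomorphism $\psi:G\to H$ with $H$ pro-finite, (iii) gives $\bb{EF}_H=\rep kH$, so the tensor functor $\psi^\#$ sends every object of $\rep kH$ into $\bb{EF}_G$ and therefore factors uniquely as $\chi^\#\circ\bar\psi^\#$. By Tannakian duality this corresponds to a unique homomorphism $\bar\psi:G^{\rm EF}\to H$ with $\bar\psi\circ\chi=\psi$, which is the asserted universal property.
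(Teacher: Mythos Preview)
Your proof is correct and follows essentially the same route as the paper: (i), (iii), (v) are left to the reader there, and your arguments for (ii) and (iv) mirror the paper's (the regular representation satisfies $R\ot R\simeq R^{\op\dim R}$ for (ii); for (iv), Lemma~\ref{17.01.2019--1} is used to show that $\langle\Ph\rangle_\ot$ has finite Tannaka group when $\Ph$ is finite). The only cosmetic difference is that for (iv) the paper phrases the reduction as ``every algebraic quotient of $G$ is finite'' and invokes \cite[Proposition~2.20]{dm}, whereas you bound $\co(G_\Ph)$ directly via matrix coefficients of $W=T\ot\check T$---this is precisely the content behind that citation, so your ``main obstacle'' is exactly where the paper does its work too.
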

\begin{proof}
$(i)$. The verification is simple and omitted. 

$(ii)$.
Let $R$ be the left-regular representation of $G$.  Following the argument of Lemma 2.3 in \cite{nori76}, we see that  $R\ot R\simeq R^{\op r}$. Consequently,  $R$ is finite.   
Since any $V\in\rep kG$ is a subobject of a certain direct sum $R\op\cdots\op R$ (this is proved for the right-regular representation in \cite[3.5, Lemma]{waterhouse}), we are done.

$(iii)$. This can be deduced from the previous item without difficulty. 

$(iv)$. We now show that if $f:G \to H$ is a quotient morphism to an {\it algebraic} group scheme $H$, then $H$ is finite. 
Let then $W$ be a faithful representation of $H$ so that $\mm{Rep}_k(H)=\langle W\rangle_\ot$ \cite[Proposition 2.20]{dm}.  By hypothesis, there exist a finite  $V\in\mm{Rep}_k(G)$  such that   $f^\#W\in\langle V\rangle_\ot$; note that  in particular $f^\#$ takes values in $\langle V\rangle_\ot$. Now, if $T\in\langle V\rangle_\ot$ is constructed from $V$ as in Lemma \ref{17.01.2019--1}, we conclude that each element of $\langle V\rangle_\ot$ is a subquotient of some power of $T\ot \check T$. Applying Proposition 2.20 of \cite{dm}, this shows that the  group scheme associated to $\langle V\rangle_\ot$ is finite. But the latter is the source of a faithfully flat morphism to $H$ \cite[Proposition 2.21(a)]{dm}, and hence $H$ is finite. 

$(v)$. This is also left to the reader. 
\end{proof}

Given Proposition \ref{13.11.2017--1}, we can put forward the
\begin{dfn}\label{15.06.2018--2}The category of essentially finite vector-bundles is, in the above notation, $\bb{EF}(\bb{NS}(X))$. We shall abuse terminology and write $\bb{EF}(X)$ instead. If $x_0\,\in\, X(k)$, then $\pi^{\rm EF}(X,x_0)$ stands for the pro-finite group scheme constructed from Proposition \ref{13.11.2017--1}. 
\end{dfn}

\begin{rmk}\label{05.05.19--3}
We notice that $\pi^{\rm EF}(X,x_0)$ classifies finite torsors on $X$. Indeed, let $P\to X$ 
be a $G$-torsor on $X$ under a finite $k$-group scheme $G$. 
The sheaf $\mathcal O_P$ as a vector bundle on $X$ satisfies the equation 
$\mathcal O_P\otimes\mathcal O_P\cong \mathcal O_P^{\oplus n}$ where $n=\dim_k\mathcal O_G$. 
Therefore, according to \cite[Proposition~(3.4)]{nori76}, $\mathcal O_P$ is semi-stable of degree 
0 when restricted to any proper curve on $X$. We conclude that $\mathcal O_P$ is in 
 $\bb{EF}(\bb{NS}(X))$. Therefore, the functor 
 $${\rm Rep}_k(G)\to \bb{Coh}(X),\quad V\mapsto P\ti^GV,$$
  lands in $\bb{EF}(\bb{NS}(X))$.
 Tannakian duality yields a group homomorphism $\pi^{\rm EF}(X,x_0)\to G$.
\end{rmk}

Let us end this section with comments on simple structural results which shall prove useful further ahead. 
\begin{rmk} \label{05.05.19--2}
Let $X$ be a CPC variety and $\cv$ an essentially finite vector bundle on it. We then derive the existence of a finite group scheme $G$, a representation $V\in{\rm Rep}_k(G)$ and a $G$-torsor $P\to X$ such that $P\ti^GV=\cv$. Now, if $V^{(1)}$ is the Frobenius twist of  $V$ \cite[Part I, 2.16]{jantzen}, we know that  $F^*(P\ti^GV)\simeq P\ti^G V^{(1)}$. Hence, for a certain $h\in\NN$, the vector bundle  $F^{*h}(\cv)$ is of the form $P^{\rm et}\ti^{G^{\rm et}}V^{(h)}$ for a certain etale covering $P^{\rm et}\aro X$. The least integer $h$ enjoying this property is to be called the \emph{height} of $\cv$. 
\end{rmk}

\section{The essentially finite fundamental group scheme of a quotient: the case of a free action}

This section functions as did Section \ref{s29.05.2018--1} and we show---by imitating the proof of  \cite[Lemma 2.8]{EHS08}---that the exact analogue of Proposition 
\ref{16.02.2018--1} holds.

\begin{prp}\label{29.05.2018--1}
Let $f\,:\,Y\,\longrightarrow\, X$ be a finite-Galois etale covering of CPC varieties over $k$. Let $G:={\rm Gal}(f)$. 
We then have exact sequences of   group schemes 
\begin{equation}\label{18.01.2019--3}1\,\aro\,\pi^{\rm S}(Y,y_0)\,\aro\,\pi^{\rm S}(X,x_0)\,\aro\, G\,\aro\,1\, 
\end{equation}
and 
\begin{equation}\label{18.01.2019--2}1\,\aro\,\pi^{\rm EF}(Y,y_0)\,\aro\,\pi^{\rm EF}(X,x_0)\,\aro\, G\,\aro\,1\, .
\end{equation}
\end{prp}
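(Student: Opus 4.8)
The plan is to run, for $\bb{NS}(-)$ and $\bb{EF}(-)$, the exact same machine that proved the $F$-divided statement in Proposition \ref{16.02.2018--1}: everything is extracted from the pair $(f^\#,f_\#)=(f^*,f_*)$, an application of the exactness criterion of Proposition \ref{tannakian_exactness}, and a final identification of the cokernel with $G$ via the associated-bundle functor. I would prove the Nori-semistable sequence \eqref{18.01.2019--3} first and then feed it into the essentially finite case \eqref{18.01.2019--2}.

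\medskip

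\noindent\emph{The $\pi^{\rm S}$ sequence.} Since $f$ is finite and \'etale it is finite faithfully flat, so $f_*$ is exact, sends vector-bundles to vector-bundles, and is right adjoint to $f^*$, hence faithful; moreover the counit $f^*f_*\to\id$ is an epimorphism. Because $f$ is Galois with group $G$, the identification $Y\ti_XY\cong\coprod_{g\in G}Y$ gives the formula
\[
f^*f_*\cm\;\cong\;\bigoplus_{g\in G}g^*\cm ,
\]
valid for every bundle $\cm$ on $Y$; in particular $f^\#f_\#(\one)=f^*f_*\co_Y\cong\co_Y\ot_k\co(G)$ is a trivial object. It remains to see that $(f^*,f_*)$ restricts to the $\bb{NS}$-categories. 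That $f^*$ preserves $\bb{NS}$ is immediate (compose a curve $C\to Y$ with $f$), and that $f^*$ \emph{reflects} $\bb{NS}$ follows by base-changing a curve $\delta:C\to X$ along $f$, decomposing $C\ti_XY$ into smooth proper components $C'_j\xrightarrow{\pi_j}C$ (finite \'etale, hence surjective), and using that pull-back along a finite surjective morphism of smooth curves reflects semistability and degree zero. Applying reflection to the displayed formula (each $g^*$ preserving $\bb{NS}(Y)$) shows $f_*\ce\in\bb{NS}(X)$ for $\ce\in\bb{NS}(Y)$. Now Proposition \ref{tannakian_exactness} yields at once that $\pi^{\rm S}(Y,y_0)\to\pi^{\rm S}(X,x_0)$ is a closed and normal immersion whose cokernel $Q$ is the full subcategory of objects of $\bb{NS}(X)$ trivialized by $f^*$. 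To identify $Q$ with $G$ I would copy the end of Proposition \ref{16.02.2018--1}: the functor $\cl_Y:\rep kG\to\bb{NS}(X)$, $V\mapsto(f_*\co_Y\ot_kV)^G=Y\ti^GV$, is faithful, exact and tensor and lands in $Q$, inducing $\la:Q\to G$; fullness of $\cl_Y$ (which here follows from $H^0(Y,\co_Y)=k$) makes $\la$ faithfully flat by Lemma \ref{12.12.2014--2}, while the unit $\cv\to f_\#f^\#\cv\cong f_\#(\one)^{\op r}$ exhibits every object of $Q$ as a subobject of some $\cl_Y(V)$, so $\la$ is a closed immersion. Hence $Q\cong G$, giving \eqref{18.01.2019--3}.

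\medskip

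\noindent\emph{The $\pi^{\rm EF}$ sequence, and the main obstacle.} I would repeat the argument with $\bb{EF}(-)$ in place of $\bb{NS}(-)$. The one genuinely new point---and the hard part---is that $f_*$ carries $\bb{EF}(Y)$ into $\bb{EF}(X)$. I would deduce this from the displayed formula again: for $\ce\in\bb{EF}(Y)$ each $g^*\ce$ is essentially finite, so $f^*f_*\ce=\bigoplus_{g\in G}g^*\ce$ is essentially finite on $Y$, and it then suffices that $f^*$ \emph{reflects} essential finiteness. This is exactly where the finiteness of $G$ enters. An object of $\bb{NS}(X)$ lies in $\bb{EF}(X)$ iff the image of $\pi^{\rm S}(X,x_0)$ in $\mm{GL}(x_0^*\cf)$ is finite (Proposition \ref{13.11.2017--1}); since, by \eqref{18.01.2019--3}, $\pi^{\rm S}(Y,y_0)\triangleleft\pi^{\rm S}(X,x_0)$ has finite quotient $G$, finiteness of the image of the normal subgroup forces finiteness of the image of the whole group (an extension of a finite group by a finite group is finite), so $f^*\cf\in\bb{EF}(Y)\Rightarrow\cf\in\bb{EF}(X)$.

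\medskip

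Granted this, $(f^*,f_*)$ restricts to an adjunction on the $\bb{EF}$-categories with all formal properties intact: $f_\#$ faithful and exact, counit epic, and $f^\#f_\#(\one)$ trivial. Proposition \ref{tannakian_exactness} then gives that $\pi^{\rm EF}(Y,y_0)\to\pi^{\rm EF}(X,x_0)$ is a closed and normal immersion with cokernel the subcategory of $\bb{EF}(X)$ trivialized by $f^*$, and the very same functor $\cl_Y$ identifies this cokernel with $G$---this time noting that $\cl_Y(V)=Y\ti^GV$ is essentially finite precisely because $Y\to X$ is a finite torsor (Remark \ref{05.05.19--3}), so $\cl_Y$ indeed takes values in $\bb{EF}(X)$. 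This produces \eqref{18.01.2019--2} and completes the proof. The principal difficulty throughout is the stability of the pushforward under the respective semistability and essential-finiteness conditions; once those are in hand the Tannakian formalism does the rest.
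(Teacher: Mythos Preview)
Your proposal is correct and follows essentially the same route as the paper's proof: construct the adjunction $(f^*,f_*)$ on $\bb{NS}$, use the decomposition $f^*f_*\cf\simeq\bigoplus_{g\in G}g^*\cf$ together with reflection of Nori-semistability along finite \'etale maps of curves to see that $f_*$ lands in $\bb{NS}(X)$, apply Proposition \ref{tannakian_exactness}, and identify the cokernel with $G$ via the associated-bundle functor; then deduce the $\bb{EF}$ case by using the already-established exact sequence \eqref{18.01.2019--3} to argue that the image of $\pi^{\rm S}(X,x_0)$ in $\bb{GL}(x_0^*f_*\cf)$ is finite whenever the image of $\pi^{\rm S}(Y,y_0)$ is. The only cosmetic difference is that the paper identifies the cokernel more tersely, by simply naming the inverse $\cv\mapsto H^0(Y,f^*\cv)$ (using $H^0(Y,\co_Y)=k$), whereas you reproduce the longer argument of Proposition \ref{16.02.2018--1}.
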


\begin{proof} 
We start by constructing a right adjoint to $f^*:\bb{NS}(X)\aro\bb{NS}(Y)$.
 Let $\cf\in\bb{NS}(Y)$ and put $\ce:=f_*(\cf)$; this is a vector bundle on $X$. If $\al:G\ti Y\aro Y$ stands for the action morphism,  the Cartesian diagram 
\[
\xymatrix{
G\ti Y \ar[r]^-{\mm{pr}}\ar[d]_\al & Y\ar[d]^f 
\\
Y\ar[r]_f&X} 
\]
and the isomorphism $f^*f_*(\cf)\simeq \al_*(\mm{pr}^*\cf)$  show that 
\begin{equation}
\label{18.01.2019--1}
f^*(\ce)\simeq \bigoplus_{g\in G}g^*(\cf)
\end{equation}
In particular, $f^*(\ce)$ is Nori-semistable. Let us now prove that $\ce$ is Nori-semistable. 
Consider now the Cartesian diagram 
\[
\xymatrix{Y_C\ar@{}[dr]|\square\ar[r]^{\de}\ar[d]_{f_C}&Y\ar[d]^f\\C\ar[r]_\ga&X,}
\] 
where $C$ is a projective and smooth curve.
Note that $f_C:Y_C\aro C$ is a finite etale covering and hence, if $j:D\aro Y_C$ stands for the immersion of a connected component, we conclude that $f_C j$ is a finite and etale covering.  From the fact that $f^*(\ce)$ is Nori-semistable, we conclude that \[\begin{split}(f_C j)^*(\ga^*\ce)&\simeq j^*f_C^*\ga^*(\ce)\\& \simeq j^*\de^*(f^*\ce)\end{split}\] is semistable and of degree zero. So it must be the case that $\ga^*(\ce)$ is semistable and of degree zero---because the degree of a pull-back is simply multiplied by the degree of the covering---, which in turn shows that $\ce$ is Nori-semistable. We have therefore our adjoint $f_*:\bb{NS}(Y)\aro\bb{NS}(X)$, which is, in addition,   faithful and exact. Clearly, $f^*f_*(\co_Y)$ is trivial (isomorphic to $\co_Y\ot_k\co(G)$) and the twisting functor $V\,\longrightarrow\,Y\ti^GV$ defines an equivalence between ${\rm Rep}_k(G)$ and $\{\cv\in\bb{NS}(X)\,:\,\text{$f^*\cv$ is trivial}\}$; an inverse is $\cv\,\longmapsto\, H^0(Y,f^*(\cv))$ (recall that $H^0(Y,\co_Y)=k$). We then conclude, applying Proposition \ref{tannakian_exactness}, that $\pi^{\rm S}(Y,y_0)\,\longrightarrow\,\pi^{\rm S}(X,x_0)$ is a normal closed immersion and that the cokernel is isomorphic to $G$, which is exactness of \eqref{18.01.2019--3}. 

To prove   exactness of \eqref{18.01.2019--2}, we only require minor adjustments: let   $\cf$ as before be required  not only to be Nori-semistable but essentially finite. Then, isomorphism  \eqref{18.01.2019--1} tells us that $f^*f_*(\cf)=f^*(\ce)$ is essentially finite. This means that   the canonic morphism  $\pi^{\rm S}(Y,y_0)\,\longrightarrow\,\bb{GL}(x_0^*\ce)$ 
has a finite image; by exactness of \eqref{18.01.2019--3}, the arrow $\pi^{\rm S}(X,x_0)\,\longrightarrow\, \bb{GL}(x_0^*\ce)$ has also finite image. This  in turn translates into the fact that $\ce\in\bb{EF}(X)$.  Hence   $f_*:\bb{EF}(Y)\aro\bb{NS}(X)$ takes values in $\bb{EF}(X)$. 
As before, an application of Proposition \ref{tannakian_exactness} establishes   exactness of \eqref{18.01.2019--2}.  
\end{proof} 

\begin{rmk}We would like to draw the reader's attention to the fact that Theorem IV in \cite{ABETZ}  overlaps with Proposition \ref{29.05.2018--1}. 
\end{rmk}

\section{The essentially finite fundamental group of a quotient}\label{quotient_EF}

In this section we wish to prove the following theorem. 

\begin{thm}\label{23.06.2017--1}Let $Y$ be a normal CPC variety. 
Let $G$ be a finite group of automorphisms of $Y$, and write 
$$
f\,:\,Y\,\longrightarrow\, X 
$$
for the quotient of $Y$ by $G$ \cite[\S~7, Theorem]{mav}. Choose $y_0\,\in\, Y(k)$ above
$x_0\,\in\, X(k)$. Then, the \textbf{image}  of the  induced homomorphism
\[
 f_{\natural}:\pi^{\rm EF}(Y,y_0)\,\aro\,\pi^{\rm EF}(X,x_0)
\]
is a   closed and  normal   subgroup scheme, and its quotient is identified with $G/I$, where $I$ is the subgroup (necessarily normal) generated by all
elements of $G$ having at least one fixed point.
\end{thm}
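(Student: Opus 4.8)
The plan is to follow closely the proof of the $F$-divided analogue, Theorem~\ref{13.09.2016--3}, splitting $f$ into a purely ramified piece and an étale piece by interposing the intermediate quotient $M:=Y/I$. Thus I would factor
\[
Y\arou{\chi} M\arou{u} X,
\]
with $\chi$ the quotient of $Y$ by the normal subgroup $I\triangleleft G$ and $u$ the morphism induced by $f$. By Lemma~\ref{03.11.2017--1}(2) the map $u$ is finite-Galois and étale with $\mm{Gal}(u)\simeq G/I$, whereas $\chi$ is genuinely ramified (and finite-Galois with group $I$). I would first check that $M$ and $X$ are again normal CPC varieties: they are integral and normal because invariants of a normal domain under a finite group of automorphisms are again normal, and they are CPC because, as recorded after Definition~\ref{05.05.19--1}, the image of a CPC scheme under a surjective morphism is CPC. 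Hence $\pi^{\rm EF}(M,\chi(y_0))$ and $\pi^{\rm EF}(X,x_0)$ are defined and the factorisation $f_\natural=u_\natural\circ\chi_\natural$ makes sense.

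The étale piece is settled outright: applying Proposition~\ref{29.05.2018--1} to the finite-Galois étale covering $u$ of CPC varieties, with group $G/I$, yields the exact sequence
\[
1\aro \pi^{\rm EF}(M,\chi(y_0))\aro\pi^{\rm EF}(X,x_0)\aro G/I\aro 1,
\]
so $u_\natural$ is a closed and normal immersion whose cokernel is $G/I$. For the ramified piece I would use the essentially finite counterpart of Theorem~\ref{06.07.2016--1}, namely that a finite-Galois genuinely ramified morphism of normal CPC varieties induces a quotient morphism $\chi_\natural\colon\pi^{\rm EF}(Y,y_0)\aro\pi^{\rm EF}(M,\chi(y_0))$. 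Granting this, $\chi_\natural$ is an epimorphism, so the image of $f_\natural$ coincides with that of $u_\natural$, a closed normal subgroup of $\pi^{\rm EF}(X,x_0)$; and because the cokernel only records the representations on which this image acts trivially (Proposition~\ref{tannakian_exactness}), one gets $\mm{Coker}(f_\natural)=\mm{Coker}(u_\natural)=G/I$.

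The decisive and hardest step is the essentially finite genuinely-ramified statement just invoked. Its proof would exploit that $\pi^{\rm EF}$ is \emph{pro-finite}: by the final clause of Lemma~\ref{12.12.2014--2} it then suffices to show that $\chi^\#\colon\bb{EF}(M)\aro\bb{EF}(Y)$ is fully faithful. Using \eqref{7.3.2019--2} to replace $\mm{Hom}$ by $\mm{Hom}(\one,-)$, and the stability of $\bb{EF}$ under duals and tensor products, this reduces to proving $H^0(M,\cv)\arou\sim H^0(Y,\chi^*\cv)$ for every $\cv\in\bb{EF}(M)$; the map is injective since $\chi$ is dominant, so the whole point is that $I=\mm{Gal}(\chi)$ fixes every global section of $\chi^*\cv$. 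To see this I would lean on the ``étale control'' emphasised in the introduction: by Remark~\ref{05.05.19--2} a suitable Frobenius twist $F^{*h}\cv$ is associated to an étale covering, for which Proposition~\ref{10.11.2017--1} gives that genuine ramification of $\chi$ amounts to surjectivity of $\pi_1(Y)\aro\pi_1(M)$ and hence to full faithfulness of pullback on the finite-étale level; a descent argument patterned on Lemma~\ref{24.06.2016--3}, with Nori-semistability playing the role of the $F$-divided structures there, then carries fullness back down from the twisted étale level to $\bb{EF}(M)$. Combined with the étale Proposition~\ref{29.05.2018--1}, this yields the asserted identification of the cokernel with $G/I$.
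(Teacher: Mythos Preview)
Your overall architecture is exactly that of the paper: factor $f$ as $Y\arou{\chi}M\arou{u}X$ via $M=Y/I$, apply Proposition~\ref{29.05.2018--1} to the \'etale piece $u$, and reduce everything to the assertion that a genuinely ramified morphism of normal CPC varieties induces a quotient on $\pi^{\rm EF}$ (this is the paper's Theorem~\ref{03.11.2017--2}). Your reduction of that assertion to $H^0(\cv)\arou\sim H^0(\chi^*\cv)$ via the pro-finite clause of Lemma~\ref{12.12.2014--2}, and your treatment of the height-zero case through Remark~\ref{05.05.19--2} and Proposition~\ref{10.11.2017--1}, also match the paper.

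The one place where your sketch diverges is the ``descent from the Frobenius-twisted level'' step. You propose to model it on Lemma~\ref{24.06.2016--3}, with Nori-semistability replacing $F$-divided structures; but the argument of Lemma~\ref{24.06.2016--3} rests on the triviality of $F$-divided modules over complete local rings (Lemma~\ref{08.05.2019--1}), and there is no Nori-semistable analogue of that local rigidity. The paper instead uses a genuinely different mechanism: given $\tau\in H^0(Y,\chi^*\ce)$, one knows by induction on height that $F_Y^*(\tau)=\chi^*(\si)$ for some $\si\in H^0(M,F_M^*\ce)$, and then Cartier's theorem on the canonical connection of $F^*\ce$ over the smooth locus forces $\si=F^*(\rho)$ there, whence $\tau=\chi^*(\ov\rho)$ after extending $\rho$ by normality. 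So your plan is right, but the tool that actually closes the argument is Cartier descent rather than an imitation of the $F$-divided fullness lemma.
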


The \emph{proof} of 
Theorem \ref{23.06.2017--1} follows precisely the same path as that of Theorem \ref{13.09.2016--3}, except that in place of Theorem \ref{06.07.2016--1} and Proposition \ref{16.02.2018--1} (the etale case),  we apply Theorem \ref{03.11.2017--2} below and Proposition \ref{29.05.2018--1}. We shall then concentrate on proving Theorem \ref{03.11.2017--2}.

\begin{thm}\label{03.11.2017--2}Let $f:Y\longrightarrow X$ be a genuinely ramified  morphism between CPC normal varieties taking the $k$-point $y_0\in Y(k)$ to the $k$-point $x_0\in X(k)$. Then 
\[
 f_{\natural}\,:\,\pi^{\rm EF}(Y,y_0)\,\aro\,\pi^{\rm EF}(X,x_0)
\]
is a quotient morphism. 
\end{thm}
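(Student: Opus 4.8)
The plan is to reduce the statement to a fullness property of the pullback functor, exploiting that the \emph{target} group scheme is pro-finite, and then to split the verification into an easy ``\'etale part'', governed by $\pi_1$, and a harder ``infinitesimal part''. First I would use that $\pi^{\rm EF}(X,x_0)$ is pro-finite (Proposition \ref{13.11.2017--1}(v)). By the final assertion of Lemma \ref{12.12.2014--2}, for a homomorphism \emph{into} a pro-finite group scheme condition (1) alone already forces faithful flatness; so it suffices to show that the restriction functor $(f_{\natural})^\#$, which under Tannakian duality is the pullback
\[
f^*\,:\,\bb{EF}(X)\,\aro\,\bb{EF}(Y),
\]
is fully faithful. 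Faithfulness is automatic, since $f$ is dominant and the objects are vector bundles, so $\co_X\to f_*\co_Y$ is injective. For fullness I would pass to internal Hom: for $\ce,\cf\in\bb{EF}(X)$ one has ${\rm Hom}(\ce,\cf)=H^0(X,\ce^\vee\ot\cf)$ with $\ce^\vee\ot\cf\in\bb{EF}(X)$ (closure under duals and tensor products, Proposition \ref{13.11.2017--1}(i)), and likewise over $Y$. Hence fullness is equivalent to surjectivity of the (always injective) restriction map
\[
H^0(X,\cv)\,\aro\,H^0(Y,f^*\cv),\qquad \cv\in\bb{EF}(X).
\]

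The \'etale case is immediate. If $\pi^{\rm EF}(X,x_0)$ acts on $x_0^*\cv$ through a finite \emph{\'etale} quotient, then $\cv$ is associated with a representation $\rho\,:\,\pi_1(X,x_0)\to\bb{GL}(x_0^*\cv)$, so $H^0(X,\cv)=(x_0^*\cv)^{\pi_1(X,x_0)}$ and, via $f_\natural$, $H^0(Y,f^*\cv)=(x_0^*\cv)^{\pi_1(Y,y_0)}$. Because $f$ is genuinely ramified, Proposition \ref{10.11.2017--1} gives that $\pi_1(Y,y_0)\to\pi_1(X,x_0)$ is surjective, so the two invariant subspaces coincide and the restriction map is bijective.

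For a general essentially finite $\cv$, Remark \ref{05.05.19--2} produces a height $h$ with $F_X^{*h}\cv$ \'etale, suggesting a reduction to the previous case by Frobenius twisting; this, however, is exactly where the \textbf{main obstacle} lies. Twisting only controls $\overline{f_\natural(\pi^{\rm EF}(Y,y_0))}$ up to Frobenius kernels — it changes invariants under infinitesimal group schemes — so the \'etale comparison does \emph{not} see the \emph{local} part of the fundamental group scheme. To capture it I would argue locally, imitating Lemma \ref{24.06.2016--3}: using the torsor description of $\cv$ (Remark \ref{05.05.19--3}), trivialize $\cv$ over an affine open $U=\mathrm{Spec}\,A$, write a section of $f^*\cv$ as $\sum_i b_ie_i$ with $b_i\in B:=\co_Y(f^{-1}U)$, and prove that the subalgebra $R:=A[b_i]\subset B$ is \emph{\'etale} over $A$; here the essentially finite structure, together with the reduction of $F_X^{*h}\cv$ to the \'etale situation, is what should force separability. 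Once $R/A$ is \'etale, genuine ramification forces $R=A$ (again through Proposition \ref{10.11.2017--1}, exactly as in Lemma \ref{24.06.2016--3}), whence $b_i\in A$ and the section descends; normality of $X$ lets one pass from a big open subset back to all of $X$. The \'etaleness of $R/A$ is the crux on which everything turns; granting it, fullness follows, and the theorem is then deduced precisely as Theorem \ref{13.09.2016--3} was in the $F$-divided setting.
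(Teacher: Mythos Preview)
Your reduction is exactly right: by the final clause of Lemma \ref{12.12.2014--2} it suffices to prove that
\[
H^0(X,\cv)\,\aro\,H^0(Y,f^*\cv)
\]
is bijective for every $\cv\in\bb{EF}(X)$, and your treatment of the height-zero (\'etale) case via Proposition \ref{10.11.2017--1} is correct and matches the paper.

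The gap is in the ``infinitesimal part''. You propose to imitate Lemma \ref{24.06.2016--3} and show that $R=A[b_i]$ is \'etale over $A$, but you yourself flag this as the crux and do not prove it. The proof of Lemma \ref{24.06.2016--3} hinged on Lemma \ref{08.05.2019--1}: an $F$-divided module over a complete local ring is trivial, so the $b_i$ land in $\wh A$ inside $\wh B$, whence unramifiedness. Essentially finite bundles enjoy \emph{no such} local triviality---a torsor under a finite \emph{local} group scheme over $\wh A$ need not be trivial---so the mechanism that produced \'etaleness of $R/A$ simply is not available here. Invoking ``the reduction of $F_X^{*h}\cv$ to the \'etale situation'' does not help: that only controls the $b_i^{p^h}$, which says nothing about separability of $R/A$.

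The paper bypasses the local-algebra route entirely and instead descends one Frobenius at a time using Cartier's theorem. Define
\[
\g S=\left\{\cv:\ \text{for every }\tau\in H^0(f^*\cv),\ F_Y^*(\tau)\in\mm{Im}\left(H^0(F_X^*\cv)\to H^0(F_Y^*f^*\cv)\right)\right\}.
\]
If $\cv$ has height one then $F_X^*\cv$ has height zero, so $\cv\in\g S$ by the \'etale case; by induction on height it then suffices to show that $\cv\in\g S$ implies $H^0(\cv)\stackrel\sim\to H^0(f^*\cv)$. Given $\tau$ with $F_Y^*(\tau)=f^*(\si)$, restrict to the smooth locus $X_0$ and use the canonical connection $\na$ on $F_{X_0}^*(\cv|_{X_0})$: on the open set where $f$ is \'etale one has $f^*\na(\si)=\na(f^*\si)=\na(F_Y^*\tau)=0$, hence $\na(\si|_{X_0})=0$ by density, and Cartier's theorem \cite[Theorem 5.1]{katz} gives $\si|_{X_0}=F_{X_0}^*(\rho)$. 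Normality of $X$ extends $\rho$ to $\ov\rho\in H^0(\cv)$; then $F_Y^*(\tau)=F_Y^*(f^*\ov\rho)$ on a dense open, hence everywhere, and injectivity of $F_Y^*$ yields $\tau=f^*\ov\rho$. This is the missing idea: the canonical connection on a Frobenius pullback replaces the local-constancy argument that powered Lemma \ref{24.06.2016--3}.
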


\begin{proof}We employ the criterion explained by Lemma \ref{12.12.2014--2}, so that it is enough to show that for each $\ce\in\bb{EF}(X)$, the natural morphism \[H^0(\ce)\aro H^0(f^*\ce)\]
is bijective. 
We begin by considering the special case where $\ce$ is of {\it height zero} (cf. Remark \ref{05.05.19--2}). 
This being so, let $\ce=T\ti^{\g g}E$, where ${\g g}$ is finite and etale,  $T\aro X$ is a \emph{connected} ${\g g}$-torsor, and $E$ is a representation of ${\g g}$. Then, by definition, \[H^0(X,T\ti^{\g g}E)=\{\text{${\g g}$-equivariant morphisms $\ph:T\aro E_a$}\}.\] Using proper chains in $X$, we see that for any $\ph\in H^0(X,T\ti^{\g g}E)$ and any two points $t_1,t_2$ in $T$, $\ph(t_1)$ and $\ph(t_2)$ lie on the same ${\g g}$-orbit. As $T$ is connected, this is impossible unless $\ph(t_1)=\ph(t_2)$ and we see that $E^{\g g}= H^0(X,T\ti^{\g g}E)$. Since $f^{-1}T\aro Y$ is also connected (by Proposition \ref{10.11.2017--1}), the same argument shows that  $E^{\g g}=H^0(Y,f^*(T\ti^{\g g}E))$, so that the proof of the special case is complete.

We now introduce the following set of vector-bundles on $X$: 
\[ 
\g S:=\left\{\cv \,:\,\begin{array}{c}\text{for any $\tau\in H^0(f^*\cv)$, its image $F_Y^*(\tau)$ in $H^0(F_Y^*f^*\cv)$ }\\\text{ belongs to the image of $f^*\,:\,
H^0(F_X^*\cv)\,\longrightarrow\, H^0(F_Y^*f^*\cv)$}\end{array}\right\}\, .\] 
We note that if $\cv\in\bb{EF}(X)$ is of height one---in the sense of the discussion after Definition \ref{15.06.2018--2}---then $\cv\in\g S$ because in this case $F_X^*\cv$ is of height zero and therefore  
$H^0(F_X^*\cv)\stackrel\sim\aro H^0(f^*F_X^*\cv)$.
This last observation, together with a simple induction argument, shows that the lemma is a consequence of the:

\vs
\noindent\emph{Claim.} If $\ce\in \g S$, then the pull-back $H^0(\ce)\,\longrightarrow\, H^0(f^*\ce)$ is bijective. 

Let then $\tau$ be a section of $f^*\ce$; by assumption, we have
\begin{equation}\label{dag1}
F_Y^*(\tau)\,=\,f^*(\si)\, ,\ \ \text{ with }\ \si\,\in\, H^0(F_X^*\ce)\, .
\end{equation}
If $X_0\,\subset\, X$ stands for the smooth locus of $X$ (so that $\mm{codim}
(X\setminus X_0;X)\,\ge\,2$), we contend that 
\begin{equation}\label{ddag1}
\si|_{X_0}\,=\,F^*_{X_0}(\rho)\, ,\ \ \text{ for some }\rho\,\in\, H^0(\ce|_{X_0})\, .
\end{equation}
Let $\na$ be the canonical connection on $F_{X}^*(\ce)|_{X_0}\,=\,F_{X_0}^*(\ce|_{X_0})$ \cite[Theorem 5.1, p.~370]{katz}. Denote by $X_1$ and open dense subset of $X$ such that the restriction of $f$ to $Y_1:=f^{-1}(X_1)$ is etale.  Then, 
\[
\na (\si|_{X_1})\,=\,0
\] 
since $f^*(\si|_{X_1})=F_Y^*(\tau)|_{Y_1}$ and $f^*\Om_{X_1}^1\simeq\Om_{Y_1}^1$. Hence, $\na(\si|_{X_0})=0$, and Cartier's theorem \cite[Theorem 5.1, p.370]{katz} guarantees the existence of $\rho$ as in \eqref{ddag1}. 
Since $X$ is normal, $\rho$ extends to $\ov\rho\,\in\, H^0(\ce)$ 
\cite[III, Corollary 3.5]{SGA2}, and   \eqref{ddag1}   gives
\begin{equation}\label{ep1}
\si|_{X_0}\,=\,F_{X_0}^*(\ov\rho|_{X_0})\, .
\end{equation}
From this point, letting $Y_0=f^{-1}(X_0)$, equations \eqref{dag1} and
\eqref{ep1} show that
\begin{equation}\label{15.06.2018--1}F_Y^*(\tau)|_{Y_0}\,=\,
F_Y^*(f^*(\ov\rho))|_{Y_0}\, .
\end{equation}
 As $Y_0\subset Y$ is dense in the variety $Y$, equation \eqref{15.06.2018--1} guarantees that 
\[
F_Y^*(\tau)\,=\,F_Y^*(f^*(\ov\rho))\,.
\]
As $F_Y^*\,:\,H^0(\cv)\,\longrightarrow\, H^0(F_Y^*\cv)$
is always injective for a vector-bundle $\cv$, we conclude that $f^*(\ov\rho)
\,=\,\tau$. This finishes the proof of the Claim.
\end{proof}

\begin{exs} 
\begin{enumerate}[(1)]
\item Let $X=\PP(q_0,\ldots,q_r)$ be the weighted projective space associated to the set of prime-to-$p$ positive integers $q_0,\ldots,q_r$ \cite[1.2.2]{dolgachev81}. By Theorem \ref{23.06.2017--1},  the group scheme $\pi^{\rm EF}(X)$ vanishes because $\pi^{\rm EF}(\PP^r)=0$ \cite[Corollary, p.93]{nori82}.
\item The morphism in Theorem \ref{03.11.2017--2} can fail to be an isomorphism  as the following argument shows.  
We suppose $p\not=2$. Let $A$ be an abelian surface and write $\iota:A\to A$ for the inversion morphism; it induces an action of $G=\ZZ/2\ZZ$ on $A$. If $\si:\tilde A\to A$ stands for the blowup of the 16 fixed points of $\iota$, the action  extends to an action  on $\tilde A$;  and the quotient $K:=G\backslash \tilde A$ is called the Kummer surface of $A$  (cf. \cite[10.5]{badescu01} for details). Now  $\pi^{\rm EF}(\tilde A)\simeq\pi^{\rm EF}(A)$ by \cite[Proposition 8, p.92]{nori82} while $K$, being a K3 surface,    has a vanishing essentially finite fundamental group scheme  (see Lemma 2.3 and Remark 2.4 in \cite{biswas-dos_santos13}). 
\end{enumerate}
\end{exs}

\end{document}